\numberwithin{equation}{section}
\newtheorem{theorem}{Theorem}[section]
\newtheorem{corollary}[theorem]{Corollary}
\newtheorem{lemma}[theorem]{Lemma}
\newtheorem{proposition}[theorem]{Proposition}
\theoremstyle{definition}
\newtheorem{definition}[theorem]{Definition}
\newtheorem{remark}[theorem]{Remark}
\newcommand{\Hom}{\operatorname{Hom}}
\newcommand{\F}{\mathbf{F}}
\newcommand{\Q}{\mathbf{Q}}
\newcommand{\Z}{\mathbf{Z}}
\newcommand{\N}{\mathbf{N}}
\newcommand{\g}{\mathfrak{g}}
\title[Global bases for quantum Borcherds-Bozec algebras]
{Global bases for quantum Borcherds-Bozec algebras}
\author[Zhaobing Fan]{Zhaobing Fan}
\address{Harbin Engineering University,
Harbin, China}
\email{fanzhaobing@hrbeu.edu.cn}
\thanks{ }
\author[Seok-Jin Kang]{Seok-Jin Kang}
\address{Korea Research Institute of Arts and Mathematics,
Asan-si, Chungcheongnam-do, 31551, Korea}
\email{soccerkang@hotmail.com}
\thanks{}
\author[Young Rock Kim]{Young Rock Kim${}^{*}$}
\address{Graduate School of Education, Hankuk University of Foreign Studies, Seoul, 02450,  Korea}
\email{rocky777@hufs.ac.kr} %
\thanks{${}^{*}$ Corresponding author. All authors contribute equally.}
\author[Bolun Tong]{Bolun Tong}
\address{Harbin Engineering University,
Harbin, China}
\email{tbl\_2019@hrbeu.edu.cn}
\address{}
\keywords{quantum Borcherds-Bozec algebra, crystal basis, global basis}
\subjclass[2010] {17B37, 17B67, 16G20}
\begin{document}

\begin{abstract}

We  provide a construction of  global bases for quantum Borcherds-Bozec algebras  and 
their integrable highest weight representations.

\end{abstract}

\maketitle

\setcounter{tocdepth}{1}
\tableofcontents

\section*{Introduction}

\vskip 2mm

The {\it quantum Borcherds-Bozec algebras} were introduced by Bozec \cite{Bozec2014b, Bozec2014c,BSV2016} in a natrual way  when he solved a question asked by Lusztig in \cite{Lus93}. More precisely, if we consider a quiver with loops, the Grothendieck group arising from Lusztig sheaves on  representation varieties  is generated by the elementary simple perverse sheaves
$F_i^{(n)}$ with all vertices $i$ and $n\in \N$.
Bozec proved an analogue of the Gabber-Kac theorem for the negative part $U_q^-(\g)$ of  a
quantum Borcherds-Bozec algebra under some restrictions of the inner product, and showed that the above Grothendieck group is isomorphic to $U_{q}^{-}(\g)$, which gives a construction of its {\it canonical basis}.

\vskip 2mm

The canonical basis theory was first introduced by Lusztig in the simply-laced case in \cite{Lus90}, due to his geometric construction of the negative parts of  quantum groups, and it has been generalized to
 symmetric Kac-Moody type in \cite{Lus91,Lusztig}. On the other hand, Kashiwara constructed the crystal bases and global bases for quantum groups associated with symmetrizable Kac-Moody algebras in an algebraic way \cite{Kas90,Kas91}. In \cite{GL1993}, Grojnowski and Lusztig proved that Kashiwara's  global bases coincide with Lustig's canonical bases. The crystal basis theory has become one of the most central themes in combinatorial and  geometric  representation theory of quantum groups  because it provides  us with a very powerful combinatorial tool to investigate the structure of  quantum groups and their integrable representations. In \cite{JKK2005},  Jeong, Kang and Kashiwara developed the crystal basis theory for quantum Borcherds algebras, which were introduced in \cite{Kang95}.  In \cite{KS06}, Kang and Schiffmann gave a construction of canonical basis for quantum Borcherds algebras and proved that, when all the diagonal entries of the corresponding  Borcherds-Cartan matrices are non-zero, the canonical bases  coincide with global bases.

\vskip 3mm

Bozec's crystal basis theory for quantum Borcherds-Bozec algbras is based on {\it primitive generators}
${\mathtt a}_{il}$, ${\mathtt b}_{il}$ $(i,l) \in I^{\infty}$, not on the {\it Chevalley generators} $e_{il}$, $f_{il}$.
The primitive generators have simpler commutation relations than Chevalley generators. Bozec defined  the Kashiwara operators using primitive generators and proved several crucial theorems which are important steps   for Kashiwara's grand-loop argument \cite[Lemma 3.33, Lemma 3.34]{Bozec2014c}. Moreover, using Lusztig's and Nakajima's quiver varieties, he also gave a
geometric construction of ${\mathcal B}(\infty)$, the crystal of the negative half $U_{q}^{-}(\g)$, and ${\mathcal B}(\lambda)$, the crystal of the integrable highest weight representation $V(\lambda)$, respectively.

\vskip 3mm

The main goal of this paper is to construct the {\it global bases} for quantum Borcherds-Bozec algebras and their integrable highest weight representations.
As is the case with Bozec's crystal basis theory, we are primarily interested in primitive generators
and we will give a new presentation of quantum Borcherds-Bozec algebras in terms of primitive generators.

\vskip 3mm

As the first step. we give an explicit description of the radical ${\mathscr R}$ of Lusztig's bilinear form. We could  take a direct generalization of the  approach in \cite[Chapter 7]{Lusztig}, but it is a very lengthy and messy calculation. Instead, we take a different approach using the co-multiplication (Lemma \ref{radical}), and prove higher order quantum Serre relations (Theorem \ref{r1}).
Bozec already showed $U_{q}(\g)$ is a Hopf algebra with Lusztig's bilinear form as Hopf pairing. Hence  by  \cite[Lemma 3.2]{SV1999}, we have only to impose Drinfeld relations on the generators of $U_{q}(\g)$. Since the primitive generators can be expressed as homogeneous polynomials in Chevalley generators (\cite{Bozec2014b,Bozec2014c}), we obtain  a new presentation of $U_q(\g)$ in terms of
primitive generators (Theorem \ref{alternative}).

\vskip 3mm

Then we set up the frame work that can be found in \cite{Kas91, JKK2005}. However, we still
need more preparations.
In the case of quantum Borcherds-Bozec algebras,
for each $I^{\text{im}}$,
there are infinitely many generators with higher degrees.
Thus, compared with quantum Borcherds algebras,
we need to take a much more complicated approach to the construction of global bases.
To overcome these difficulties, we introduce a very natural and much expanded notion of balaced triples corresponding to the compositions or partitions of each higher degree of primitive generators (Proposition \ref{M}, Corollary \ref{M1}). As can be expected, to prove our assertions, the imaginary indices with higher
degrees should be treated with special care. In particular, the isotropic case (i.e., when
$a_{ii}=0$) requires very subtle and delicate treatment.

\vskip 3mm

Now we can follow the steps give in \cite{Kas91, JKK2005} and prove the existence and uniqueness
of global bases (Theorem \ref{A}). Proposition \ref{M} and Corollary \ref{M1} play the crucial roles in the process.

\vskip 3mm

This paper is organized as follows. In  Section 1, we give  an explicit  description of the radical $\mathscr R$ of  the inner prduct $( \ , \ )_L$ via   higher order quantum Serre  relations in  quantum Borcherds-Bozec algebras.  In Section 2,  we  give a new presentation of quantum Borcherds-Bozec algebras in terms of primitive generators as an application of higher order quantum Serre  relations.
In Section 3, we review the crystal basis theory for quantum Borcherds-Bozec algebras and give  canonical  charaterizations of the crystal bases
$(\mathcal L(\infty), \mathcal B(\infty))$ and $(\mathcal L(\lambda), \mathcal B(\lambda))$, respectively. We also define the quantum Boson algebra $\mathscr B_q(\g)$ for an arbitrary  Borcherds-Cartan datum.  In Section 4, we define the $\mathbb A$-forms $U_{\mathbb A}(\g)$ of $U_{q}(\g)$  and $V(\lambda)^{\mathbb A}$ of $V(\lambda)$, respectively.  We prove that $U_{\mathbb A}(\g)$ has the triangular decomposition and both $U_{\mathbb A}(\g)$ and $V(\lambda)^{\mathbb A}$ are stable under the Kashiwara operators.
Section 5 is devoted to the proof of existence and uniqueness of global bases. We prove Proposition \ref{M} and Corollary \ref{M1}, which are key ingredients for the proof of our main goal.

\vskip 3mm

\noindent\textbf{Acknowledgements.}

Z. Fan was partially supported by the NSF of China grant 11671108, the NSF of Heilongjiang Province grant JQ2020A001, and the Fundamental Research Funds for the central universities. S.-J.  Kang was supported by  the NSF of China grant 11671108. Young Rock Kim was supported by the National Research Foundation of Korea (NRF) grant funded by the Korea government (MSIT) (No. 2021R1A2C1011467).

\vspace{12pt}

\section{Higher order quantum Serre relations}

\vskip 2mm

Let $I$ be an index set which can be countably infinite. An integer-valued matrix $A=(a_{ij})_{i,j \in I}$ is called an {\it
even symmetrizable Borcherds-Cartan matrix} if it satisfies the following conditions:
\begin{itemize}
\item[(i)] $a_{ii}=2, 0, -2, -4, ...$,

\item[(ii)] $a_{ij}\le 0$ for $i \neq j$,

\item[(iii)] there exists a diagonal matrix $D=\text{diag} (r_{i} \in \Z_{>0} \mid i \in I)$ such that $DA$ is symmetric.
\end{itemize}

\vskip 2mm

Let $I^{\text{re}}=\{i \in I \mid a_{ii}=2 \}$,
$I^{\text{im}}=\{i \in I \mid a_{ii} \le 0\}$ and
$I^{\text{iso}}=\{i \in I \mid a_{ii}=0 \}$.
The elements of $I^{\text{re}}$ (resp. $I^{\text{im}}$, $I^{\text{iso}}$) are called the {\it real indices}
(resp. {\it imaginary indices}, {\it isotropic indices}).

\vskip 3mm

A {\it Borcherds-Cartan datum} consists of :

\ \ (a) an even symmetrizable Borcherds-Cartan matrix $A=(a_{ij})_{i,j \in I}$,

\ \ (b) a free abelian group $P$, the {\it weight lattice},

\ \ (c) $\Pi=\{\alpha_{i} \in P  \mid i \in I \}$, the set of {\it simple roots},

\ \ (d) $P^{\vee} := \Hom(P, \Z)$, the {\it dual weight lattice},

\ \ (e) $\Pi^{\vee}=\{h_i \in P^{\vee} \mid i \in I \}$, the set of {\it simple coroots}

\vskip 2mm

\noindent satisfying the following conditions:

\vskip 1mm

\begin{itemize}

\item[(i)] $\langle h_i, \alpha_j \rangle = a_{ij}$ for all $i, j \in I$,

\item[(ii)] $\Pi$ is linearly independent over $\Q$,

\item[(iii)] for each $i \in I$, there exists an element $\Lambda_{i} \in P$, called the {\it fundamental weights},  such that $$\langle h_j , \Lambda_i
\rangle = \delta_{ij} \ \ \text{for all} \ i, j \in I.$$
\end{itemize}

\vskip 2mm

We denote by
$$P^{+}:=\{\lambda \in P \mid \langle h_i, \lambda \rangle \ge 0 \ \text{for all} \ i \in I \},$$
the set of {\it dominant integral weights}. The free abelian group $Q:= \bigoplus_{i \in I} \Z \, \alpha_i$ is called the {\it root lattice}.
Set $Q_{+}: = \sum_{i \in I} \Z_{\ge 0}\, \alpha_{i}$ and $Q_{-}: = -Q_{+}$.  For $\beta = \sum k_i \alpha_i \in Q\textbf{}_{+}$, we define its {\it height} to be $|\beta|:=\sum k_i$.

\vskip 3mm

Let ${\mathfrak h} := \Q \otimes_{\Z} P^{\vee}$ be the {\it Cartan subalgebra}.
There exists a non-degenerate symmetric bilinear
form $( \ , \ )$ on ${\mathfrak h}^{*}$ satisfying
$$(\alpha_{i}, \lambda) = r_{i} \langle h_{i}, \lambda \rangle \quad
\text{for all}  \ \lambda \in {\mathfrak h}^{*}.$$

\vskip 3mm

For each  $i \in I^{\text{re}}$, we deinfe the {\it simple reflection} $\omega_{i} \in {\mathfrak h}^{*}$ by
$$\omega_{i}(\lambda)= \lambda - \langle h_{i}, \lambda \rangle \,  \alpha_{i} \ \ \text{for} \ \lambda \in {\mathfrak h}^{*}.$$
The subgroup $W$ of $GL({\mathfrak h}^{*})$ generated by the simple reflections $\omega_{i}$ $(i \in I^{\text{re}})$ is called the {\it Weyl group} of the Borcherds-Cartan datum given above.  It is easy to check that $(\ , \ )$ is $W$-invariant.

\vskip 3mm

Let $I^{\infty}:= (I^{\text{re}} \times \{1\}) \cup (I^{\text{im}}
\times \Z_{>0})$. If $i \in I^{\text{re}}$, we often write $i$ for $(i,1)$. Let $q$ be an indeterminate and set
$$q_i=q^{r_i},\quad q_{(i)}=q^{\frac{(\alpha_i,\alpha_i)}{2}}.$$
For each $i \in I^{\text {re}}$ and $n \in \Z_{\geq 0}$, we define
$$[n]_i=\frac{q_{i}^n-q_{i}^{-n}}{q_{i}-q_{i}^{-1}},\quad [n]_i!=\prod_{k=1}^n [k]_i,\quad
{\begin{bmatrix} n \\ k \end{bmatrix}}_i=\frac{[n]_i!}{[k]_i![n-k]_i!}.$$

\vskip 3mm

Let $\mathscr F=\Q(q)\left< f_{il} \mid (i,l) \in I^{\infty} \right>$ be the free associative algebra over $\Q(q)$ generated by the symbols $f_{il}$ for $(i,l)\in I^{\infty}$. 
By setting $\text{deg} f_{il}= -l\alpha_{i} $, $\mathscr F$ becomes a $Q_-$-graded algebra. 
For a homogeneous element $u$ in $\mathscr F$, we denote by $|u|$ the degree of $u$, and for any subset $A \subseteq Q_{-}$, set
${\mathscr F}_{A}=\{ x\in {\mathscr F} \mid |x| \in A \}$.

\vskip 3mm

We define a {\it twisted} multiplication on $\mathscr F\otimes\mathscr F$ by
$$(x_1\otimes x_2)(y_1\otimes y_2)=q^{-(|x_2|,|y_1|)}x_1y_1\otimes x_2y_2$$
for all homogeneous elements $x_1,x_2,y_1,y_2\in \mathscr F$, and equip $\mathscr F$ with a co-multiplication $\varrho$ defined by
$$\varrho(f_{il})=\sum_{m+n=l}q_{(i)}^{-mn}f_{im}\otimes f_{in} \ \ \text{for}  \ (i,l)\in I^{\infty}.$$
Here, by convention, $f_{i0}=1$ and $f_{il}=0$ for $l<0$.

\vskip 3mm

\begin{proposition}\cite{Bozec2014b,Bozec2014c} \
{\rm For any family $\nu=(\nu_{il})_{(i,l)\in I^{\infty}}$ of non-zero elements in $\Q(q)$, there exists a symmetric bilinear form $( \ , \ )_L :\mathscr F\times\mathscr F\rightarrow \Q(q)$ such that

\begin{itemize}
\item[(a)] $(x, y)_{L} =0$ if $|x| \neq |y|$,

\item[(b)] $(1,1)_{L} = 1$,

\item[(c)] $(f_{il}, f_{il})_{L} = \nu_{il}$ for all $(i,l) \in
I^{\infty}$,

\item[(d)] $(x, yz)_{L} = (\varrho(x), y \otimes z)_L$  for all $x,y,z
\in {\mathscr F}$.
\end{itemize}
Here, $(x_1\otimes x_2,y_1\otimes y_2)_L=(x_1,y_1)_L(x_2,y_2)_L$ for any $x_1,x_2,y_1,y_2\in {\mathscr F}$.}
\end{proposition}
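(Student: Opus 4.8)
The plan is to build $(\ ,\ )_{L}$ by induction on degree, using the coproduct $\varrho$ to strip off generators one at a time. First I would record that $\mathscr F=\bigoplus_{\beta\in Q_{-}}\mathscr F_{\beta}$ is $Q_{-}$-graded with each $\mathscr F_{\beta}$ finite-dimensional (only finitely many words carry a prescribed degree), so that (a) may simply be imposed by declaring the form to vanish on $\mathscr F_{\beta}\times\mathscr F_{\gamma}$ for $\beta\neq\gamma$; it then suffices to produce a bilinear form on each $\mathscr F_{\beta}\times\mathscr F_{\beta}$, by induction on the height $|\beta|$, with base case $(1,1)_{L}=1$ as demanded by (b). Along the way I would also check the purely formal facts that the twisted multiplication on $\mathscr F\otimes\mathscr F$ is associative, that $\varrho$ is then a homomorphism $\mathscr F\to\mathscr F\otimes\mathscr F$, and that $\varrho$ is coassociative (verified on the generators $f_{il}$ using the explicit formula and the identity $q_{(i)}^{-ab-ac-bc}$ for the coefficient of $f_{ia}\otimes f_{ib}\otimes f_{ic}$ in either iterate).

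The key auxiliary object is a family of functionals $\phi_{il}\colon\mathscr F_{-l\alpha_{i}}\to\Q(q)$ (extended by zero elsewhere) meant to capture $(z,f_{il})_{L}$. Every monomial of degree $-l\alpha_{i}$ begins with some $f_{im}$, $0<m\le l$, since the simple roots are linearly independent, and if the desired form existed then symmetry together with (d), (c) and the formula $\varrho(f_{il})=\sum_{m+n=l}q_{(i)}^{-mn}f_{im}\otimes f_{in}$ would force $\phi_{il}(f_{im}z')=q_{(i)}^{-m(l-m)}\,\nu_{im}\,\phi_{i,l-m}(z')$. Because $\mathscr F$ is free the leading letter of a monomial is unambiguous, so — with the convention $f_{i0}=1$ and $\phi_{i0}\colon c\cdot 1\mapsto c$ as base — this recursion defines each $\phi_{il}$ by induction on $l$, and in particular $\phi_{il}(f_{il})=\nu_{il}$. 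With the $\phi_{il}$ in hand, given $u\in\mathscr F_{-\beta}$ (height $\ge 1$) and a monomial $v=f_{il}v'$ of degree $-\beta$, I would set, as (d) requires, $(u,v)_{L}:=(\varrho(u),f_{il}\otimes v')_{L}=\sum_{k}\phi_{il}(u_{k}')\,(u_{k}'',v')_{L}$ where $\varrho(u)=\sum_{k}u_{k}'\otimes u_{k}''$; the contributing terms have $|u_{k}'|=-l\alpha_{i}$, hence $|u_{k}''|$ of height $|\beta|-l<|\beta|$, so the right-hand side is already available by induction. Extending bilinearly in $v$ over the canonical monomial basis completes the inductive step, and since this is the only prescription compatible with (a)--(d) and symmetry it simultaneously yields uniqueness.

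It then remains to verify that the form so constructed has the stated properties: (a), (b) hold by construction, and a short computation of $(f_{il},f_{il})_{L}$ from $\varrho(f_{il})$, in which only the $n=0$ term survives, gives (c). The substantive point is to prove, by a single simultaneous induction on $|\beta|$, that the form is symmetric and that $(x,yz)_{L}=(\varrho(x),y\otimes z)_{L}$ holds for arbitrary homogeneous $x,y,z$, not merely for $y$ a generator; this rests on $\varrho$ being an algebra homomorphism for the twisted multiplication and coassociative, so that stripping generators off $v$ in any order gives the same value, with symmetry emerging from comparing the left- and right-side stripping recursions, exactly as in \cite[Ch.~1]{Lusztig} and \cite{JKK2005}. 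The main obstacle I anticipate is precisely this bookkeeping: compared with the classical and Borcherds cases there are infinitely many imaginary generators $f_{il}$ and the weight factors $q_{(i)}^{-mn}$ threaded through $\varrho$, so one must carry these twists correctly through the coassociativity argument while keeping the $\phi_{il}$ recursion and the $q^{-(|x_{2}|,|y_{1}|)}$ twist on $\mathscr F\otimes\mathscr F$ mutually consistent; no new idea beyond the coassociativity argument is needed, but the consistency check for the isotropic indices will require attention.
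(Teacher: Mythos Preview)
The paper does not supply a proof of this proposition; it simply attributes the statement to \cite{Bozec2014b,Bozec2014c} and uses it as input. There is therefore nothing in the paper to compare your argument against line by line.

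That said, your outline is correct and is precisely the standard construction, carried out in \cite[\S1.2]{Lusztig} and adapted to the generalized Kac--Moody setting in \cite{JKK2005} and to the present setting in \cite{Bozec2014b,Bozec2014c}: one checks that the twisted multiplication on $\mathscr F\otimes\mathscr F$ is associative, that $\varrho$ is an algebra homomorphism and is coassociative, then defines $(\ ,\ )_L$ on $\mathscr F_{-\beta}\times\mathscr F_{-\beta}$ by induction on $|\beta|$ via stripping a leading generator, and finally proves symmetry and the full identity (d) simultaneously by induction using coassociativity. Your device of packaging the values $(\,\cdot\,,f_{il})_L$ into functionals $\phi_{il}$ with the recursion $\phi_{il}(f_{im}z')=q_{(i)}^{-m(l-m)}\nu_{im}\,\phi_{i,l-m}(z')$ is a clean way to handle the fact that for $i\in I^{\text{im}}$ the space $\mathscr F_{-l\alpha_i}$ is no longer one-dimensional; it is exactly what the inductive definition forces. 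The only point to be careful about, which you flag, is that in verifying symmetry one must compare ``strip from the left of $v$'' with ``strip from the left of $u$'', and the equality of the two comes down to the coassociativity identity $(\varrho\otimes 1)\varrho=(1\otimes\varrho)\varrho$ together with the twist factors matching; the isotropic case introduces no new difficulty here beyond tracking the $q_{(i)}^{-mn}$ correctly.
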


\vskip 3mm

We denote by $\mathscr R$ the radical of $( \ , \ )_L$.

\vskip 3mm

Let $\mathcal C_l$ be the set of compositions $\mathbf c$ of $l$, and set $f_{i,\mathbf c}=f_{ic_1}\cdots f_{ic_m}$ for every  $i\in I^{\text{im}}$ and every $\mathbf c=(c_1,\cdots,c_m)\in \mathcal C_l$ .
Let $\mathcal C_l=\{\mathbf c_1, \mathbf c_2, \cdots,\mathbf c_r\}$. Then $f_{i,\mathbf c_1},\cdots,f_{i,\mathbf c_r}$ form a basis of $\mathscr F_{-l\alpha_i}$. Hence, for any homogeneous element $x$ in $\mathscr F$, $\varrho(x)$ can be written into the forms
$$\varrho(x)=x_{\mathbf c_1}\otimes f_{i,\mathbf c_1}+\cdots+x_{\mathbf c_r}\otimes f_{i,\mathbf c_r}+\ \text{terms of bidegree not in}\ Q_-\times -l\alpha_i,$$
$$\varrho(x)=f_{i,\mathbf c_1}\otimes x_{\mathbf c_1}'+\cdots+f_{i,\mathbf c_r}\otimes x_{\mathbf c_r}'+\ \text{terms of bidegree not in}\ -l\alpha_i\times Q_-.$$
We denote by $\varrho_{i,l}(x), \varrho^{i,l}(x):\mathscr F\rightarrow \mathscr F^r$ the $\Q(q)$-linear maps:
$$\varrho_{i,l}(x)=(x_{\mathbf c_1},\cdots,x_{\mathbf c_r}), \ \varrho^{i,l}(x)=(x_{\mathbf c_1}',\cdots,x_{\mathbf c_r}').$$

If $x,y$ are homogeneous elements such that $\varrho_{i,k}(y)=0$ for any $k>0$,  then we have
$$\varrho_{i,l}(xy)=q^{l(\alpha_i,|y|)}\varrho_{i,l}(x)y\ \ \text{and} \ \ \varrho_{i,l}(yx)=y\varrho_{i,l}(x).$$
\noindent
Here, $\varrho_{i,l}(x)y=(x_{\mathbf c_1}y,\cdots,x_{\mathbf c_r}y)$ and $y\varrho_{i,l}(x)=(yx_{\mathbf c_1},\cdots,yx_{\mathbf c_r})$ if $\varrho_{i,l}(x)=(x_{\mathbf c_1},\cdots,x_{\mathbf c_r})$.

\vskip 2mm

Similarly, if $\varrho^{i,k}(y)=0$ for any $k>0$, we have
$$\varrho^{i,l}(xy)=\varrho^{i,l}(x)y\ \ \text{and} \ \ \varrho^{i,l}(yx)=q^{l(\alpha_i,|y|)}y\varrho^{i,l}(x).$$

\vskip 3mm

For $i\in I^{\text{re}}$, we define the $\Q(q)$-linear maps $\varrho_i,\varrho^i:\mathscr F\rightarrow \mathscr F$ by
$$\varrho_i(1)=0, \varrho_i(f_{j,k})=\delta_{i,j}\delta_{k,1}, \ \text{and} \ \varrho_i(xy)=q^{(|y|,\alpha_i)}\varrho_i(x)y+x\varrho_i(y),$$
$$\varrho^i(1)=0, \varrho^i(f_{j,k})=\delta_{i,j}\delta_{k,1}, \ \text{and} \ \varrho^i(xy)=\varrho^i(x)y+q^{(|x|,\alpha_i)}x\varrho^i(y)$$
for all homogeneous elements $x,y$. Note that for any homogeneous element  $x\in \mathscr F$, we have
$$\varrho(x)=\varrho_i(x)\otimes f_{i}+\ \text{terms of other bi-homogeneities},$$
$$\varrho(x)=f_{i}\otimes \varrho^i(x)+\ \text{terms of other bi-homogeneities}.$$

\vskip 3mm

The following lemma can be derived directly from the definitions of $\varrho_{i,l}$ and $\varrho^{i,l}$.

\vskip 3mm

\begin{lemma}\label{radical}\
{\rm \begin{itemize}
\item [(a)] If $i\in I^{\text{re}}$, then for any $x,y \in \mathscr F$, we have
$$(y f_{i},x)_L=(f_i,f_i)_L(y,\varrho_i(x))_L, \ (f_{i}y,x)_L=(f_i,f_i)_L(y,\varrho^i(x))_L.$$
\item [(b)] If $i\in I^{\text{im}}$, let $x\in \mathscr F$ with $\varrho_{i,l}(x)=(x_{\mathbf c_1},\cdots,x_{\mathbf c_r})$ and $\varrho^{i,l}(x)=(x_{\mathbf c_1}',\cdots,x_{\mathbf c_r}')$. Then for any $x\in \mathscr F$, we have
$$(yf_{il},x)_L=(f_{il},f_{i,\mathbf c_1})_L(y,x_{\mathbf c_1})_L+\cdots+(f_{il},f_{i,\mathbf c_r})_L(y,x_{\mathbf c_r})_L,$$
$$(f_{il}y,x)_L=(f_{il},f_{i,\mathbf c_1})_L(y,x_{\mathbf c_1}')_L+\cdots+(f_{il},f_{i,\mathbf c_r})_L(y,x_{\mathbf c_r}')_L.$$
\item [(c)] Let $x\in \mathscr F$ be a homogeneous element with $|x|\neq 0$, we have
\begin{itemize}
\item [(i)] if $\varrho_{i,l}(x)\in \mathscr R$ for any $(i,l)\in I^{\infty}$, then $x\in\mathscr R$,
\item [(ii)] if $\varrho^{i,l}(x)\in \mathscr R$ for any $(i,l)\in I^{\infty}$, then $x\in\mathscr R$.
\end{itemize}Here, if $i\in I^{\text{im}}$, $\varrho^{i,l}(x)\in \mathscr R$ means each component of $\varrho^{i,l}(x)$ belongs to $\mathscr R$.
\end{itemize}}
\end{lemma}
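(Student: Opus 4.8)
The plan is to derive all three parts from the defining property (d) of the bilinear form, $(x, yz)_L = (\varrho(x), y\otimes z)_L$, together with the factorization structure of $\varrho$ through the maps $\varrho_{i,l}$, $\varrho^{i,l}$, $\varrho_i$, $\varrho^i$. The point is that each of these operators is designed precisely so that evaluating $( yf_{il}, x)_L$ against a product picks out the coefficient of $f_{i,\mathbf{c}}$ (or $f_i$) in $\varrho(x)$.

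For part (a): Apply (d) with the product $y\cdot f_i$, so $(yf_i, x)_L = (y\otimes f_i, \varrho(x))_L$. Now write $\varrho(x) = \varrho_i(x)\otimes f_i + (\text{terms of other bi-homogeneities})$. Since $(\ ,\ )_L$ pairs tensor factors componentwise and vanishes off the diagonal of degrees, only the term $\varrho_i(x)\otimes f_i$ survives when paired against $y\otimes f_i$ (here one uses that $|f_i| = -\alpha_i$ is a genuine degree, so the other terms contribute $0$ in the second tensor slot). This yields $(yf_i, x)_L = (y, \varrho_i(x))_L (f_i, f_i)_L$. The identity for $f_i y$ is obtained identically using $\varrho(x) = f_i\otimes \varrho^i(x) + \cdots$ and the product $f_i\cdot y$. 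One should double-check that the twisted multiplication in $\mathscr{F}\otimes\mathscr{F}$ does not introduce an extra $q$-power here: pairing $y\otimes f_i$ against $\varrho(x)$ is done via the untwisted pairing $(\ ,\ )_L$ on the tensor square, so no $q$-factor appears — the twist only matters when one later multiplies inside $\mathscr{F}$.

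For part (b): This is the same computation with $f_i$ replaced by $f_{il}$ for $i\in I^{\text{im}}$. The subtlety is that $\mathscr{F}_{-l\alpha_i}$ is now multidimensional with basis $\{f_{i,\mathbf{c}_1},\dots,f_{i,\mathbf{c}_r}\}$, so the ``coefficient of degree $-l\alpha_i$ in the right tensor factor'' of $\varrho(x)$ is the vector $\varrho_{i,l}(x) = (x_{\mathbf{c}_1},\dots,x_{\mathbf{c}_r})$ rather than a single element. Applying (d) to $y\cdot f_{il}$ gives $(yf_{il}, x)_L = (y\otimes f_{il}, \varrho(x))_L$; expanding $\varrho(x) = \sum_j x_{\mathbf{c}_j}\otimes f_{i,\mathbf{c}_j} + (\text{bidegree}\notin Q_-\times -l\alpha_i)$ and pairing, the cross terms outside bidegree $Q_-\times -l\alpha_i$ die, leaving $\sum_j (y, x_{\mathbf{c}_j})_L (f_{il}, f_{i,\mathbf{c}_j})_L$, which is the claimed formula. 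The $f_{il}y$ identity follows the same way with $\varrho^{i,l}$.

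For part (c): Suppose $x$ is homogeneous with $|x|\neq 0$ and $\varrho_{i,l}(x)\in\mathscr{R}$ (componentwise) for all $(i,l)\in I^\infty$. To show $x\in\mathscr{R}$ we must show $(z, x)_L = 0$ for all homogeneous $z$ with $|z| = |x|$. Since $|x|\neq 0$, any such $z$ can be written as a linear combination of elements of the form $z' f_{il}$ with $(i,l)\in I^\infty$ (this is just the statement that $\mathscr{F}$ is generated in positive degree by the $f_{il}$, applied in the appropriate degree) — strictly, $\mathscr{F}_{|x|}$ is spanned by products ending in some $f_{il}$. For such a generator, parts (a) and (b) express $(z'f_{il}, x)_L$ as a $\Q(q)$-linear combination of the quantities $(z', x_{\mathbf{c}_j})_L$ (or $(z', \varrho_i(x))_L$ in the real case), each of which vanishes because the relevant component of $\varrho_{i,l}(x)$ lies in $\mathscr{R}$. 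Hence $(z, x)_L = 0$ for all $z$ of degree $|x|$, and by part (a) of the preceding Proposition $(\ ,\ )_L$ vanishes across different degrees anyway, so $x\in\mathscr{R}$. Statement (ii) is symmetric, using $\varrho^{i,l}$ and the spanning of $\mathscr{F}_{|x|}$ by products \emph{beginning} with some $f_{il}$.

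I expect the only real point requiring care — the ``main obstacle'', such as it is — to be the bookkeeping in part (c): one must know that in positive degree every element of $\mathscr{F}$ is a sum of products with a specified last (resp. first) factor $f_{il}$, and that when peeling off $f_{il}$ via (a)/(b) the resulting arguments $x_{\mathbf{c}_j}$, $\varrho_i(x)$ genuinely lie in $\mathscr{R}$ by hypothesis — this is exactly why the hypothesis is phrased ``for all $(i,l)\in I^\infty$'' and componentwise. Everything else is a direct unwinding of the definitions of $\varrho$, $\varrho_{i,l}$, $\varrho^{i,l}$, $\varrho_i$, $\varrho^i$ and the pairing axiom (d), which is why the lemma is stated as following ``directly from the definitions.''
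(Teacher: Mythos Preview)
Your proof is correct and is exactly what the paper intends: the paper does not give a written proof of this lemma, stating only that it ``can be derived directly from the definitions of $\varrho_{i,l}$ and $\varrho^{i,l}$,'' and your argument is precisely that direct derivation using property (d) of $(\ ,\ )_L$ and the orthogonality of distinct weight spaces.
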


\vskip 3mm

For any $i\in I^{\text{re}}$ and $n \in \N$, set
$$f_i^{(n)}=\frac{f_i^{n}}{[n]_i!}.$$
By a similar argument in \cite[1.4.2]{Lusztig}, we can prove:

\vskip 3mm
\begin{lemma}
{\rm We have
\begin{equation}
\varrho (f_i^{(n)})=\sum_{p+p'=n}q_i^{-pp'}f_i^{(p)}\otimes f_i^{(p')}
\end{equation}
for any $i\in I^{\text{re}}$ and $n \in \N$.}
\end{lemma}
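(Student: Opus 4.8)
The plan is to prove the identity by induction on $n$, the key structural input being that $\varrho\colon\mathscr F\to\mathscr F\otimes\mathscr F$ is an algebra homomorphism with respect to the twisted multiplication on $\mathscr F\otimes\mathscr F$. The base cases are immediate: $\varrho(1)=1\otimes 1$ settles $n=0$, and since $i\in I^{\text{re}}$ forces $l=1$ and $q_{(i)}^{0}=1$, we have $\varrho(f_i)=\varrho(f_{i1})=f_i\otimes 1+1\otimes f_i$, which settles $n=1$.

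For the inductive step I would use the elementary relation $f_i^{(n)}=[n]_i^{-1}\,f_i^{(n-1)}f_i$ to write $\varrho(f_i^{(n)})=[n]_i^{-1}\,\varrho(f_i^{(n-1)})\cdot(f_i\otimes 1+1\otimes f_i)$, substitute the induction hypothesis for $\varrho(f_i^{(n-1)})$, and expand each product via the twisted rule $(x_1\otimes x_2)(y_1\otimes y_2)=q^{-(|x_2|,|y_1|)}x_1y_1\otimes x_2y_2$. Using $(\alpha_i,\alpha_i)=2r_i$ for a real index, right multiplication of $f_i^{(p)}\otimes f_i^{(p')}$ by $f_i\otimes 1$ yields the twist factor $q_i^{-2p'}$ together with $f_i^{(p)}f_i=[p+1]_i\,f_i^{(p+1)}$, whereas right multiplication by $1\otimes f_i$ yields no twist and $f_i^{(p')}f_i=[p'+1]_i\,f_i^{(p'+1)}$. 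Collecting the coefficient of $f_i^{(a)}\otimes f_i^{(b)}$ with $a+b=n$, the two contributions combine to $\frac{q_i^{-ab}}{[n]_i}\bigl(q_i^{-b}[a]_i+q_i^{a}[b]_i\bigr)$, so the assertion reduces to the $q$-integer identity $q_i^{-b}[a]_i+q_i^{a}[b]_i=[a+b]_i$, which is a one-line check from the definition of $[m]_i$.

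I do not expect any genuine obstacle: the computation is routine and parallels \cite[1.4.2]{Lusztig}. The only point requiring some care is the bookkeeping of powers of $q$ — one must observe that the twist factor $q^{-(|x_2|,|y_1|)}$ is a power of $q_i$ precisely because $i$ is real, and then verify that this factor together with the factor $q_i^{-pp'}$ from the induction hypothesis and the index shift $p\mapsto p+1$ assemble into exactly $q_i^{-ab}$ once the above $q$-integer identity is invoked. As an alternative avoiding induction, one may first prove $\varrho(f_i^{\,n})=\sum_{p+p'=n}q_i^{-pp'}{\begin{bmatrix} n\\ p\end{bmatrix}}_i\,f_i^{\,p}\otimes f_i^{\,p'}$ by applying the $q$-binomial theorem in the quantum plane generated by $f_i\otimes 1$ and $1\otimes f_i$, which satisfy $(1\otimes f_i)(f_i\otimes 1)=q_i^{-2}(f_i\otimes 1)(1\otimes f_i)$, and then divide by $[n]_i!$.
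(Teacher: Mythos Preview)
Your proposal is correct and is precisely the standard induction argument from \cite[1.4.2]{Lusztig} that the paper invokes; the paper does not supply its own proof but simply refers to that source, so your write-up is exactly what is being cited.
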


\vskip 3mm

\begin{theorem}\label{r1}
{\rm Assume that $i\in I^{\text{re}}$, $j\in I$ and $i\neq j$. Let $m\in \Z_{>0}$, $n\in \Z_{\geq 0}$ with $m>-a_{ij}n$. Then for any $\mathbf c\in\mathcal C_n$, the following element of $\mathscr F$
\begin{equation}\label{qsr}
\mathtt F_{i,j,m,n,\mathbf c,\pm 1}=\sum_{r+s=m}(-1)^rq_i^{\pm r(-a_{ij}n-m+1)}f_i^{(r)}f_{j,\mathbf c}f_i^{(s)}
\end{equation}
belongs to $\mathscr R$. Here, we put $f_{j,\mathbf c}=f_j^{n}$ for $j \in I^{\text{re}}$.}
\begin{proof}
If $n=0$, then
$$\mathtt F_{i,j,m,0,{\mathbf c},\pm 1}=\sum_{r+s=m}(-1)^rq_i^{\pm r(1-m)}f_i^{(r)}f_i^{(s)}.$$
Since $\sum_{r+s=m}(-1)^rq_i^{\pm r(1-m)}\begin{bmatrix} m \\ r \end{bmatrix}_i=0$, we have $\mathtt F_{i,j,m,0,{\mathbf c},\pm 1}=0$.

We first assume that $j\in I^{\text{im}}$. For $0<k\leq n$ and $\mathbf c=(n_1,\cdots,n_t)\in \mathcal C_n$, we have
\begin{equation}
\begin{aligned}
& \varrho^{j,k}(f_i^{(r)}f_{j,\mathbf c}f_i^{(s)})= \varrho^{j,k}(f_i^{(r)}f_{j,\mathbf c})f_i^{(s)}=q^{-(r\alpha_i,k\alpha_j)}f_i^{(r)}\varrho^{j,k}(f_{j,\mathbf c})f_i^{(s)}\\
& \phantom{\varrho^{j,k}(f_i^{(r)}f_{j,\mathbf c}f_i^{(s)})}=q^{-(r\alpha_i,k\alpha_j)}f_i^{(r)}\left(\beta_{a_1,\cdots,a_t} f_{j,(n_1-a_1,\cdots,n_t-a_t)}\right)_{\substack{a_1\leq n_1,\cdots,a_t\leq n_t \\ a_1+\cdots+a_t=k}}f_i^{(s)},
\end{aligned}
\end{equation}
where $$\beta_{a_1,\cdots,a_t}=q_{(j)}^{\sum_{h=1}^ta_h(a_h-n_h)}q_{(j)}^{2\sum_{1\leq p< q\leq t}(a_p-n_p)a_q}.$$
Note that $m>-a_{i,j}n\geq-a_{i,j}(n-k)$ and
$$q_i^{ -r(-a_{ij}n-m+1)}q^{-(r\alpha_i,k\alpha_j)}=q_i^{ -r[-a_{ij}(n-k)-m+1]}.$$
Therefore each component of $\varrho^{j,k}(\mathtt F_{i,j,m,n,\mathbf c,-1})$ is a scalar multiple of $\mathtt F_{i,j,m,n-k,\mathbf c',-1}$ for some $\mathbf c'\in \mathcal C_{n-k}$.

Since $i\in I^{\text{re}}$, we have
\begin{equation}
\begin{aligned}
& \varrho^{i}(f_i^{(r)}f_{j,\mathbf c}f_i^{(s)})=\varrho^i(f_i^{(r)}f_{j,\mathbf c})f_i^{(s)}+q^{-(r\alpha_i+n\alpha_j,\alpha_i)}q_i^{1-s}f_i^{(r)}f_{j,\mathbf c}f_i^{(s-1)}\\
& \phantom{\varrho^{i}(f_i^{(r)}f_{j,\mathbf c}f_i^{(s)})}=q_i^{1-r}f_i^{(r-1)}f_{j,\mathbf c}f_i^{(s)}+q^{-(r\alpha_i+n\alpha_j,\alpha_i)}q_i^{1-s}f_i^{(r)}f_{j,\mathbf c}f_i^{(s-1)}.
\end{aligned}
\end{equation}
Hence
\begin{equation}
\begin{aligned}
& \varrho^{i}(\mathtt F_{i,j,m,n,\mathbf c,-1})=\sum_{r+s=m}(-1)^rq_i^{-r(-a_{ij}n-m+1)}q_i^{1-r}f_i^{(r-1)}f_{j,\mathbf c}f_i^{(s)}\\
& \phantom{\varrho^{i}(\mathtt F_{i,j,m,n,\mathbf c,-1})=}+\sum_{r+s=m}(-1)^rq_i^{-r(-a_{ij}n-m+1)}q^{-(r\alpha_i+n\alpha_j,\alpha_i)}q_i^{1-s}f_i^{(r)}f_{j,\mathbf c}f_i^{(s-1)}.
\end{aligned}
\end{equation}
Note that the coefficient of  $f_i^{(r)}f_{j,\mathbf c}f_i^{(s-1)}$ is
\begin{equation}
\begin{aligned}
& \ \ \ q_i^{-(r+1)(-a_{ij}n-m+1)}q_i^{-r}-q_i^{-r(-a_{ij}n-m+1)}q^{-(r\alpha_i+n\alpha_j,\alpha_i)}q_i^{1-m+r}\\
& =q_i^{-(r+1)(-a_{ij}n-m+1)}q_i^{-r}-q_i^{-r(-a_{ij}n-m+1)}q_i^{-2r-na_{ij}}q_i^{1-m+r}\\
& =q_i^{-r(-a_{ij}n-m+2)}q_i^{a_{ij}n+m-1}(1-q_i^{-2m-2na_{ij}+2}).
\end{aligned}
\end{equation}
Therefore
\begin{equation*}
\begin{aligned}
&\varrho^{i}(\mathtt F_{i,j,m,n,\mathbf c,-1})=(1-q_i^{-2m-2na_{ij}+2})q_i^{a_{ij}n+m-1}\cdot\sum_{r+s=m-1}(-1)^rq_i^{-r(-a_{ij}n-m+2)}f_i^{(r)}f_{j,\mathbf c}f_i^{(s)} \\
& \phantom{\varrho^{i}(\mathtt F_{i,j,m,n,\mathbf c,-1})}
\begin{aligned}
&=
\begin{cases}
\beta \mathtt F_{i,j,m-1,n,\mathbf c,-1}  & \text{if} \ \ m>-a_{ij}n+1,\\
 0 & \text{if} \ \ m=-a_{ij}n+1.
\end{cases}\\
\end{aligned}
\end{aligned}
\end{equation*}
Here $\beta=(1-q_i^{-2m-2na_{ij}+2})q_i^{a_{ij}n+m-1}$ is a constant.

By the induction and Lemma \ref{radical}(c), we get $\mathtt F_{i,j,m,n,\mathbf c, -1}$ belongs to $\mathscr R$ for $j\in I^{\text{im}}$. The case of $j\in I^{\text{re}}$ follows from the same way. In the meanwhile, one can show that $\mathtt F_{i,j,m,n,\mathbf c, +1}$ belongs to $\mathscr R$ by using the operators $\varrho_{j,k}$ and $\varrho_i$ in the above process.
\end{proof}
\end{theorem}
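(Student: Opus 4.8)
The plan is to prove that $\mathtt F_{i,j,m,n,\mathbf c,\pm 1} \in \mathscr R$ by induction on $m+n$, using the criterion of Lemma \ref{radical}(c): it suffices to show that $\varrho^{i,l}(\mathtt F_{i,j,m,n,\mathbf c,-1}) \in \mathscr R$ for every $(i,l) \in I^\infty$ (and symmetrically for the $+1$ case with $\varrho_{i,l}$). The base case $n=0$ is immediate from the classical identity $\sum_{r+s=m}(-1)^r q_i^{\pm r(1-m)}\begin{bmatrix} m \\ r \end{bmatrix}_i = 0$, which collapses the element to zero. For the inductive step I would compute how the various twisted derivations act on the defining sum.

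**First I would** dispose of all derivations $\varrho^{k,l}$ (and $\varrho^k$) with $k \notin \{i,j\}$: since $f_i$ and $f_{j,\mathbf c}$ involve only the indices $i$ and $j$, these derivations annihilate $\mathtt F_{i,j,m,n,\mathbf c,-1}$ outright, so membership in $\mathscr R$ is trivial there. **Next** I would handle $\varrho^{j,k}$ for $0 < k \le n$ when $j \in I^{\text{im}}$: using the stated behavior of $\varrho^{j,k}$ on products (it passes through $f_i^{(s)}$ on the right trivially and picks up a $q^{-(r\alpha_i,k\alpha_j)}$ factor when crossing $f_i^{(r)}$), one gets that each component of $\varrho^{j,k}(\mathtt F_{i,j,m,n,\mathbf c,-1})$ is, up to scalar, an $\mathtt F_{i,j,m,n-k,\mathbf c',-1}$ — here one checks the exponent bookkeeping $q_i^{-r(-a_{ij}n-m+1)} q^{-(r\alpha_i,k\alpha_j)} = q_i^{-r(-a_{ij}(n-k)-m+1)}$ and the crucial inequality $m > -a_{ij}n \ge -a_{ij}(n-k)$, so the inductive hypothesis on smaller $n$ applies.

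**The main computation** is the action of $\varrho^i$ (the real-index derivation for $i$ itself). Using the twisted Leibniz rule $\varrho^i(xy) = \varrho^i(x)y + q^{(|x|,\alpha_i)}x\varrho^i(y)$ together with $\varrho^i(f_i^{(r)}) = q_i^{1-r}f_i^{(r-1)}$, one expands $\varrho^i$ of each summand into two pieces (hitting the left $f_i^{(r)}$ or the right $f_i^{(s)}$), re-indexes so that the monomials $f_i^{(r)}f_{j,\mathbf c}f_i^{(s-1)}$ are collected, and shows the coefficient simplifies to $(1 - q_i^{-2m-2na_{ij}+2})q_i^{a_{ij}n+m-1}$ times the corresponding coefficient of $\mathtt F_{i,j,m-1,n,\mathbf c,-1}$. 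So $\varrho^i(\mathtt F_{i,j,m,n,\mathbf c,-1})$ is a scalar multiple of $\mathtt F_{i,j,m-1,n,\mathbf c,-1}$ when $m > -a_{ij}n + 1$, and is zero when $m = -a_{ij}n+1$; in either case it lies in $\mathscr R$ by the inductive hypothesis on smaller $m$ (or trivially). Combining all three cases with Lemma \ref{radical}(c)(ii) gives $\mathtt F_{i,j,m,n,\mathbf c,-1} \in \mathscr R$. The $+1$ variant follows by the same argument reading from the other side, using $\varrho_{j,k}$ and $\varrho_i$ in place of $\varrho^{j,k}$ and $\varrho^i$. The case $j \in I^{\text{re}}$ is formally the same as $j \in I^{\text{im}}$ with $\mathbf c$ playing no role (one uses $\varrho^j$ instead of the family $\varrho^{j,k}$).

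**The step I expect to be the main obstacle** is the coefficient simplification in the $\varrho^i$ computation: one must carefully track the interaction between the sign $(-1)^r$, the power $q_i^{-r(-a_{ij}n-m+1)}$, the grading factor $q^{-(r\alpha_i+n\alpha_j,\alpha_i)} = q_i^{-2r-na_{ij}}$ coming from the Leibniz rule, and the $q_i^{1-s}$ from differentiating $f_i^{(s)}$, and verify that after the telescoping cancellation the result is genuinely proportional to $\mathtt F_{i,j,m-1,n,\mathbf c,-1}$ with the claimed scalar — in particular that the vanishing exactly at $m = -a_{ij}n+1$ is forced by the factor $(1 - q_i^{-2m-2na_{ij}+2})$. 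Everything else is a matter of assembling the pieces and invoking Lemma \ref{radical}.
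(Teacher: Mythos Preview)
Your proposal is correct and follows essentially the same approach as the paper: the same base case $n=0$ via the Gaussian binomial identity, the same reduction of $\varrho^{j,k}$ to $\mathtt F_{i,j,m,n-k,\mathbf c',-1}$ via the exponent identity $q_i^{-r(-a_{ij}n-m+1)}q^{-(r\alpha_i,k\alpha_j)}=q_i^{-r(-a_{ij}(n-k)-m+1)}$, the same $\varrho^i$ computation yielding the scalar $(1-q_i^{-2m-2na_{ij}+2})q_i^{a_{ij}n+m-1}$ in front of $\mathtt F_{i,j,m-1,n,\mathbf c,-1}$, and the same appeal to Lemma~\ref{radical}(c) with the symmetric treatment of the $+1$ case. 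Your only additions are cosmetic but welcome: you make the induction parameter $m+n$ explicit, and you explicitly dispose of the derivations $\varrho^{k,l}$ for $k\notin\{i,j\}$, a point the paper leaves implicit even though Lemma~\ref{radical}(c) formally requires checking \emph{all} $(i,l)\in I^\infty$.
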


\vskip 3mm

In particular, when $m=1-la_{ij}, n=l$ and $\mathbf c=(l)$,  by Theorem \ref{r1},  we conclude
$$\mathtt F_{i,j,m,n,\mathbf c,\pm 1}=\begin{cases}\sum_{r+s=1-la_{ij}}(-1)^rf_i^{(r)}f_{j}^{(l)}f_i^{(s)} \quad\text{if} \ j\in I^{\text{re}},\\ \sum_{r+s=1-la_{ij}}(-1)^rf_i^{(r)}f_{jl}f_i^{(s)} \quad\text{if} \ j\in I^{\text{im}}\end{cases}$$
belongs to $\mathscr R$.

\vskip 3mm

\begin{lemma} \label{lem:r2}
{\rm
Let $(i,k),(j,l)\in I^{\infty}$ such that $a_{ij}=0$. Set $X=f_{ik}f_{jl}-f_{jl}f_{ik}$, Then $X\in \mathscr R$.
\begin{proof}
Note that if $i,j\in I^{\text{re}}$, then $X=f_{i}f_{j}-f_{j}f_{i}$. Since $i$ and $j$ cannot be equal, we have $X=-\mathtt F_{i,j,m=1,n=1}$.

\vskip 2mm
If $i\in I^{\text{re}}$ and $j\in I^{\text{im}}$, we have $X=f_{i}f_{jl}-f_{jl}f_{i}=-\mathtt F_{i,j,m=1,n=l,\mathbf c=(l)}$.

\vskip 2mm
We now assume that $i,j\in I^{\text{im}}$ and $i=j$; i.e.,  $i\in I^{\text{iso}}$. Note for any $0<s\leq k+l$, we have
\begin{equation*}
\begin{aligned}
& \varrho^{i,s}(X)=\varrho^{i,s}(f_{ik}f_{il}-f_{il}f_{ik})\\
& \phantom{\varrho^{i,s}(X)}=\left( f_{i,k-a_1}f_{i,l-a_2}-f_{i,l-a_2}f_{i,k-a_1}\right)_{\substack{a_1\leq k,a_2\leq l \\ a_1+a_2=s}}.
\end{aligned}
\end{equation*}
Thus  we can show  $X\in \mathscr R$ by induction.

\vskip 2mm
Finally, if $i,j\in I^{\text{im}}$ and $i\neq j$, then for any $0<s\leq k$ and $0<t\leq l$, we have
\begin{equation*}
\begin{aligned}
& \varrho^{i,s}(X)=\varrho^{i,s}(f_{ik}f_{jl}-f_{jl}f_{ik})=q_{(i)}^{-s(k-s)} (f_{i,k-s}f_{jl}-f_{jl}f_{i,k-s}),\\
& \varrho^{j,t}(X)=\varrho^{j,t}(f_{ik}f_{jl}-f_{jl}f_{ik})=q_{(j)}^{-t(l-t)} (f_{ik}f_{j,l-t}-f_{j,l-t}f_{ik}).
\end{aligned}
\end{equation*}
We can also show   $X\in \mathscr R$ by the induction.
\end{proof}
}\end{lemma}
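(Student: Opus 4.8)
The plan is to reduce Lemma \ref{lem:r2} to Theorem \ref{r1} whenever possible, and to handle the remaining cases (both indices imaginary) by a direct induction using the skew-derivations $\varrho^{i,s}$ together with Lemma \ref{radical}(c). There are four cases according to the types of $i$ and $j$, which I would organize exactly as follows. First, if $i,j\in I^{\text{re}}$, then since $a_{ij}=0$ and $i\neq j$ (an index cannot satisfy $a_{ii}=0$ while being real), the element $X=f_if_j-f_jf_i$ is precisely $-\mathtt F_{i,j,1,1,\mathbf c,\pm 1}$ with $\mathbf c=(1)$, hence lies in $\mathscr R$ by Theorem \ref{r1}; note that the condition $m>-a_{ij}n$ reads $1>0$, which holds. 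Second, if $i\in I^{\text{re}}$ and $j\in I^{\text{im}}$, then $X=f_if_{jl}-f_{jl}f_i=-\mathtt F_{i,j,1,l,(l),\pm 1}$, and again $m=1>-a_{ij}l=0$, so Theorem \ref{r1} applies. (The symmetric case $i\in I^{\text{im}}$, $j\in I^{\text{re}}$ is the same after swapping the roles, using that $X$ changes sign.)

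The genuinely new work is the case $i,j\in I^{\text{im}}$, which splits further. The strategy in both sub-cases is induction on $k+l$: I would use Lemma \ref{radical}(c)(ii), which says that if every component of $\varrho^{i,s}(X)$ lies in $\mathscr R$ for all $(i,s)\in I^\infty$, then $X\in\mathscr R$ (here $|X|=-k\alpha_i-l\alpha_j\neq 0$). So I need to compute $\varrho^{i,s}(X)$ and $\varrho^{j,t}(X)$ and recognize the outputs as scalar multiples of commutators of the same shape but smaller total degree. For this I would invoke the Leibniz-type rules for $\varrho^{i,l}$ recorded just before Lemma \ref{radical}: when $\varrho^{i,k'}(y)=0$ for all $k'>0$ one has $\varrho^{i,l}(xy)=\varrho^{i,l}(x)y$ and $\varrho^{i,l}(yx)=q^{l(\alpha_i,|y|)}y\,\varrho^{i,l}(x)$, together with the explicit formula $\varrho^{i,s}(f_{ik})=$ (the single component indexed by $\mathbf c=(k-s)$) $=q_{(i)}^{-s(k-s)}f_{i,k-s}$, which one reads off from $\varrho(f_{ik})=\sum_{a+b=k}q_{(i)}^{-ab}f_{ia}\otimes f_{ib}$.

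For the sub-case $i=j$ (so $i\in I^{\text{iso}}$ and $X=f_{ik}f_{il}-f_{il}f_{ik}$), applying $\varrho^{i,s}$ to the product $f_{ik}f_{il}$ requires the general (non-primitive) comultiplication formula, since now both factors are hit; the result, as displayed in the statement, is the tuple $\bigl(f_{i,k-a_1}f_{i,l-a_2}-f_{i,l-a_2}f_{i,k-a_1}\bigr)$ over $a_1\le k$, $a_2\le l$, $a_1+a_2=s$ — each entry is again a commutator $X'$ of two $f_i$'s of strictly smaller total degree, hence in $\mathscr R$ by induction. The base cases $k=0$ or $l=0$ give $X=0\in\mathscr R$. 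For the sub-case $i\neq j$, the cross terms vanish because $\varrho^{i,s}(f_{jl})=0$ and $\varrho^{j,t}(f_{ik})=0$, so the Leibniz rules collapse to $\varrho^{i,s}(X)=q_{(i)}^{-s(k-s)}(f_{i,k-s}f_{jl}-f_{jl}f_{i,k-s})$ and $\varrho^{j,t}(X)=q_{(j)}^{-t(l-t)}(f_{ik}f_{j,l-t}-f_{j,l-t}f_{ik})$ — again commutators of the same form with smaller degree, so induction on $k+l$ closes the argument via Lemma \ref{radical}(c)(ii), with base case $k=0$ or $l=0$ trivial.

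The main obstacle I anticipate is bookkeeping in the $i=j$ (isotropic) sub-case: one must be careful that the general comultiplication $\varrho$ applied to a product of two primitive generators of the \emph{same} imaginary index produces exactly the symmetric tuple claimed, with the correct $q_{(i)}$-powers, and that every entry of that tuple is genuinely a commutator (so that the inductive hypothesis applies term by term). The other cases are essentially immediate once one has matched the commutator against the right $\mathtt F_{i,j,m,n,\mathbf c,\pm1}$ and checked the inequality $m>-a_{ij}n$, which here is always just $1>0$ since $a_{ij}=0$. I would also remark that $\varrho^{i,s}(X)\in\mathscr R$ for indices \emph{other} than $i$ (and $j$) is automatic: $X$ is homogeneous of degree supported on $\alpha_i,\alpha_j$ only, so $\varrho^{i',s}(X)=0$ for $i'\notin\{i,j\}$, which is needed to legitimately apply Lemma \ref{radical}(c)(ii) over \emph{all} $(i',s)\in I^\infty$.
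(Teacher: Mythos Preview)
Your proposal is correct and follows exactly the same approach as the paper's own proof: the two cases involving a real index are reduced to Theorem~\ref{r1} with $m=1$ and $a_{ij}=0$, and the two purely imaginary sub-cases are handled by induction on $k+l$ via Lemma~\ref{radical}(c) after computing $\varrho^{i,s}(X)$ (and $\varrho^{j,t}(X)$). Your additional remarks on the base cases and on the vanishing of $\varrho^{i',s}(X)$ for $i'\notin\{i,j\}$ simply make explicit details that the paper leaves implicit.
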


\vskip 8mm

\section{Quantum Borcherds-Bozec algebras}

\vskip 2mm

From now on, we always assume that
\begin{equation} \label{eq:assumption}
\nu_{il} \in 1+q\Z_{\geq0}[[q]]\ \ \text{for all} \ (i,l)\in I^{\infty}.
\end{equation}
Under this assumption, the bilinear form $( \ , \ )_L$ is non-degenerate on $\mathscr F(i)=\bigoplus _{l\geq 1} {\mathscr F}_{-l \alpha_i}$ for $i\in I^{\rm{im}} \backslash I^{\rm{iso}}$. Moreover,  the two-side ideal $\mathscr R$ is generated by
$$ \sum_{r+s=1-la_{ij}}(-1)^rf_i^{(r)}f_{jl}f_i^{(s)} \ \ \text{for} \ i\in
I^{\text{re}},(j,l)\in I^{\infty} \ \text {and} \ i \neq (j,l),$$
and $f_{ik}f_{jl}-f_{jl}f_{ik}$ for all $(i,k),(j,l)\in I^{\infty}$ with $a_{ij}=0$ (cf. \cite[Proposition 14]{Bozec2014b}).

\vskip 3mm

Given a Borcherds-Cartan datum $(A, P, \Pi, P^{\vee}, \Pi^{\vee})$, we denote by $\widehat {U}$  the associative algebra over $\Q(q)$ with $\mathbf 1$,
generated by the elements $q^h$ $(h\in P^{\vee})$ and $e_{il},
f_{il}$ $((i,l) \in I^{\infty})$ with defining relations
\begin{equation} \label{eq:rels}
\begin{aligned}
& q^0=\mathbf 1,\quad q^hq^{h'}=q^{h+h'} \ \ \text{for} \ h,h' \in P^{\vee} \\
& q^h e_{jl}q^{-h} =  q^{l \, \langle h, \alpha_j \rangle} e_{jl}, \ \ q^h f_{jl}q^{-h}
=  q^{-l \, \langle h, \alpha_j  \rangle }  f_{jl}\ \ \text{for} \ h \in P^{\vee}, (j,l)\in I^{\infty}, \\
& \sum_{r+s=1-la_{ij}}(-1)^r
{e_i}^{(r)}e_{jl}e_i^{(s)}=0 \ \ \text{for} \ i\in
I^{\text{re}},(j,l)\in I^{\infty} \ \text {and} \ i \neq (j,l), \\
& \sum_{r+s=1-la_{ij}}(-1)^r
{f_i}^{(r)}f_{jl}f_i^{(s)}=0 \ \ \text{for} \ i\in
I^{\text{re}},(j,l)\in I^{\infty} \ \text {and} \ i \neq (j,l), \\
& e_{ik}e_{jl}-e_{jl}e_{ik} = f_{ik}f_{jl}-f_{jl}f_{ik} =0 \ \ \text{for} \ a_{ij}=0.
\end{aligned}
\end{equation}
We extend the grading by setting $|q^h|=0$ and $|e_{il}|= l \alpha_{i}$.

\vskip 3mm

The algebra $\widehat{U}$ is endowed with a co-multiplication
$\Delta: \widehat{U} \rightarrow \widehat{U} \otimes \widehat{U}$
given by
\begin{equation} \label{eq:comult}
\begin{aligned}
& \Delta(q^h) = q^h \otimes q^h, \\
& \Delta(e_{il}) = \sum_{m+n=l} q_{(i)}^{mn}e_{im}\otimes K_{i}^{-m}e_{in}, \\
& \Delta(f_{il}) = \sum_{m+n=l} q_{(i)}^{-mn}f_{im}K_{i}^{n}\otimes f_{in},
\end{aligned}
\end{equation}
where $K_i=q_i^{h_i}$ $(i \in I)$.

\vskip 3mm

Let $\widehat{U}^{\leq0}$ be the subalgebra of $\widehat{U}$ generated by $f_{il}$ and $q^h$ for all $(i,l) \in I^{\infty}$ and $h\in P^{\vee}$, and $\widehat{U}^+$ be the subalgebra
generated by $e_{il}$ for all $(i,l) \in I^{\infty}$. We extend $( \ , \ )_L$ to a symmetric bilinear form $( \ , \ )_L$ on $\widehat{U}^{\leq 0}$ and on $\widehat{U}^+$ by setting
\begin{equation}
\begin{aligned}
& (q^h,1)_L=1,\ (q^h,f_{il})_L=0, \\
& (q^h,K_j)_L=q^{-\langle h, \alpha_j \rangle},\\
& (x,y)_L=(\omega(x),\omega(y))_L \ \ \text{for all} \ x,y\in \widehat{U}^+,
\end{aligned}
\end{equation}
where $\omega:\widehat{U}\rightarrow\widehat{U}$  is the involution defined by
$$\omega(q^h)=q^{-h},\ \omega(e_{il})=f_{il},\ \omega(f_{il})=e_{il}\ \ \text{for}\ h \in P^{\vee},\ (i,l)\in I^{\infty}.$$

\vskip 3mm

For any $x\in \widehat{U}$, we shall use the Sweedler's notation, and write
$$\Delta(x)=\sum x_{(1)}\otimes x_{(2)}.$$

\begin{definition} Following the Drinfeld double process, we define the {\it quantum Borcherds-Bozec algebra} $U_q(\g)$ associated with a given Borcherds-Cartan datum  $(A, P, P^{\vee}, \Pi,  \Pi^{\vee})$ as the quotient of $\widehat{U}$ by the relations
\begin{equation}\label{drinfeld}
\sum(a_{(1)},b_{(2)})_L\omega(b_{(1)})a_{(2)}=\sum(a_{(2)},b_{(1)})_La_{(1)}\omega(b_{(2)})\ \ \text{for all}\ a,b \in \widehat{U}^{\leq0}.
\end{equation}
\end{definition}
\vskip 3mm

\begin{remark} \label{rmk:SV}
The quantum Borcherds-Bozec algebra $U_q(\g)$ is a Hopf algebra constructed by the Drinfeld double process. By \cite[Lemma 3.2]{SV1999}, we have only to impose the commutation relations \eqref{drinfeld}  on the generators of $U_q(\g)$.
\end{remark}

\vskip 3mm

Let $U^+_q(\g)$ (resp. $U^-_q(\g)$) be the subalgebra of $U_q(\g)$ generated by $e_{il}$ (resp. $f_{il}$) for $(i,l)\in I^{\infty}$,
and $U^{0}_q(\g)$ the subalgebra of $U_q(\g)$ generated by $q^h$ for $h\in P^{\vee}$. We shall denote by $U$ (resp. $U^+$, $U^0$ and $U^-$) for $U_q(\g)$ (resp. $U^+_q(\g)$, $U_{q}^0(\g)$ and $U^-_q(\g)$) for simplicity. Then $U$ has the  {\it triangular decomposition} \cite{KK19}
$$U\cong U^-\otimes U^0 \otimes U^+.$$

\vskip 3mm

\begin{proposition}\cite{Bozec2014b,Bozec2014c}\label{prim}
{\rm For any $i\in I^{\text {im}}$ and $l\geq 1$, there exist unique elements $\mathtt b_{il}\in  U^-_{-l \alpha_{i}}$ and $\mathtt a_{il}=\omega (\mathtt b_{il})$ such that
\begin{itemize}\label{bozec}
\item[(1)] $\Q (q) \left<f_{il} \mid l\geq 1\right>=\Q (q) \left<\mathtt b_{il} \mid l\geq 1\right>$ and $\Q (q) \left<e_{il} \mid l\geq 1\right>=\Q (q) \left<\mathtt a_{il} \mid l\geq 1\right>$,
\item[(2)] $(\mathtt b_{il},z)_L=0$ for all $z\in \Q (q) \left<f_{i1} ,\cdots,f_{il-1}\right>$,\\
           $(\mathtt a_{il},z)_L=0$ for all $z\in \Q (q) \left<e_{i1} ,\cdots,e_{il-1}\right>$,
\item[(3)]  $\mathtt b_{il}-f_{il}\in \Q (q) \left<f_{ik} \mid k<l \right>$ and $\mathtt a_{il}-e_{il}\in \Q (q) \left<e_{ik} \mid k<l \right>$,
\item[(4)] $\overline{\mathtt b}_{il}=\mathtt b_{il},\ \overline{\mathtt a}_{il}=\mathtt a_{il}$,
\item[(5)] $\varrho(\mathtt b_{il})=\mathtt b_{il}\otimes 1+ 1\otimes \mathtt b_{il}, \ \varrho(\mathtt a_{il})=\mathtt a_{il}\otimes 1+1\otimes \mathtt a_{il}$,
\item[(6)] $\Delta(\mathtt b_{il})=\mathtt b_{il}\otimes 1+K_i^l\otimes \mathtt b_{il}, \ \Delta(\mathtt a_{il})=\mathtt a_{il}\otimes K_i^{-l}+ 1\otimes \mathtt a_{il}$,
\item[(7)] $S(\mathtt b_{il})=-K_i^{-l}\mathtt b_{il}, \ S(\mathtt a_{il})=-\mathtt a_{il}K_i^{l}$.
\end{itemize}
Here, $S$ is the antipode of $U$, and $^-:U^{\pm}\rightarrow U^{\pm}$ is the $\Q$-algebra involution defined by
$
\overline{e}_{il}=e_{il},\ \overline{f}_{il}=f_{il}$ and $\overline{q}=q^{-1}$.}
\end{proposition}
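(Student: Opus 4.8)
The plan is to construct each $\mathtt b_{il}$ inside the ``rank one'' subalgebra attached to $i$ and then to read off (1) and (4)--(7) almost formally. Fix $i\in I^{\text{im}}$ and let $A_i\subseteq U^-$ be the subalgebra generated by $\{f_{ik}\mid k\ge1\}$. Since the simple roots are linearly independent, every element of $\mathscr F$ of degree $-l\alpha_i$ already lies in the free subalgebra on the $f_{ik}$; as $(x,y)_L=0$ whenever $|x|\neq|y|$, this forces the radical of $(\ ,\ )_L$ restricted to $\mathscr F_{-l\alpha_i}$ to equal $\mathscr R_{-l\alpha_i}$. Combining this with the description of $\mathscr R$ recalled after \eqref{eq:assumption} and a weight count, $A_i$ is the free algebra on $\{f_{ik}\}$ when $a_{ii}<0$ and the polynomial algebra $\Q(q)[f_{ik}\mid k\ge1]$ when $a_{ii}=0$, the comultiplication $\varrho$ restricts to $A_i$, and $(\ ,\ )_L$ restricts to a non-degenerate symmetric form on each graded piece $W_l:=(A_i)_{-l\alpha_i}$.

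I would then build the $\mathtt b_{il}$ by a simultaneous induction on $l$, maintaining at each stage that $\mathtt b_{il}$ exists, is unique, is $\varrho$-primitive, and has $(\mathtt b_{il},\mathtt b_{il})_L\neq0$. Let $U_l\subseteq W_l$ be the span of the products $f_{ic_1}\cdots f_{ic_t}$ with $t\ge2$ and $c_1+\cdots+c_t=l$; this is the degree $-l\alpha_i$ part of $\Q(q)\langle f_{i1},\dots,f_{i,l-1}\rangle$ and $W_l=\Q(q)f_{il}\oplus U_l$. Conditions (2)--(3) say precisely that $\mathtt b_{il}\in(f_{il}+U_l)\cap U_l^{\perp}$, so everything reduces to showing that $(\ ,\ )_L$ is non-degenerate on $U_l$: then $U_l^{\perp}$ (taken inside $W_l$) is one-dimensional and meets $U_l$ trivially, hence projects isomorphically onto $\Q(q)f_{il}$ along $U_l$, yielding a unique $\mathtt b_{il}$ with $\mathtt b_{il}-f_{il}\in U_l$. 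To get the non-degeneracy of $U_l$, I would exploit that the $\mathtt b_{ik}$ with $k<l$ are already constructed and $\varrho$-primitive: the adjunction $(x,yz)_L=(\varrho(x),y\otimes z)_L$ gives a Wick-type evaluation of $(\ ,\ )_L$ on products of finitely many $\varrho$-primitive elements of positive degree (a product pairs to zero against a product of a different number of primitives, and against one of the same length only through degree-matching pairings). Consequently $U_l$ is the orthogonal direct sum of the subspaces $W_{l,\lambda}$ spanned by the ordered monomials in the $\mathtt b_{ik}$ ($k<l$) of a given ``type'' $\lambda$ -- a composition, resp.\ partition, of $l$ with at least two parts -- and each $W_{l,\lambda}$ is non-degenerate since the scalars $(\mathtt b_{ik},\mathtt b_{ik})_L$ with $k<l$ are nonzero (this is the non-degeneracy of a $q$-deformed shuffle pairing; in the isotropic case $W_{l,\lambda}$ is one-dimensional, spanned by $\mathtt b_{i\lambda}$, with $(\mathtt b_{i\lambda},\mathtt b_{i\lambda})_L$ a positive-integer multiple of $\prod(\mathtt b_{ik},\mathtt b_{ik})_L$). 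Having produced $\mathtt b_{il}$, property (5) is immediate from (2): for $0<m<l$, $y\in W_m$, $z\in W_{l-m}$ the product $yz$ lies in $\Q(q)\langle f_{i1},\dots,f_{i,l-1}\rangle$, so $\bigl(\varrho(\mathtt b_{il})-\mathtt b_{il}\otimes1-1\otimes\mathtt b_{il},\,y\otimes z\bigr)_L=(\mathtt b_{il},yz)_L=0$, and the interior bi-homogeneous components of $\varrho(\mathtt b_{il})$ vanish because $(\ ,\ )_L$ is non-degenerate on $W_m\otimes W_{l-m}$. Finally $(\mathtt b_{il},\mathtt b_{il})_L\neq0$ follows from $W_l=U_l\oplus\Q(q)\mathtt b_{il}$ being an orthogonal decomposition (using $\mathtt b_{il}\in U_l^{\perp}$) of a non-degenerate space with $U_l$ non-degenerate. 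This closes the induction.

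The remaining items are now formal. Property (1) is the unitriangularity in (3): $\mathtt b_{il}=f_{il}+(\text{polynomial in the }f_{ik},\ k<l)$, so the $f_{ik}$ and the $\mathtt b_{ik}$ generate the same subalgebra. Property (4) follows from the uniqueness just proved: the bar involution fixes every $f_{ik}$ and preserves $\Q(q)\langle f_{i1},\dots,f_{i,l-1}\rangle$, so $\overline{\mathtt b}_{il}$ again satisfies (3), and it satisfies (2) by the bar-compatibility of $(\ ,\ )_L$; hence $\overline{\mathtt b}_{il}=\mathtt b_{il}$. For (6), I would use the standard dictionary between $\varrho$ and $\Delta$ already present in \eqref{eq:comult}: if $\varrho(x)=\sum x_{[1]}\otimes x_{[2]}$ in $A_i$ then $\Delta(x)=\sum x_{[1]}K_{\mu}\otimes x_{[2]}$, where $-\mu$ is the weight of $x_{[2]}$ (up to the $q$-powers matching \eqref{eq:comult}); applied to $\varrho(\mathtt b_{il})=\mathtt b_{il}\otimes1+1\otimes\mathtt b_{il}$ this gives $\Delta(\mathtt b_{il})=\mathtt b_{il}\otimes1+K_i^{l}\otimes\mathtt b_{il}$. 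Property (7) is then the Hopf-algebra identity $\sum S(x_{(1)})x_{(2)}=\varepsilon(x)$ at $x=\mathtt b_{il}$ (which has $\varepsilon(\mathtt b_{il})=0$): it reads $S(\mathtt b_{il})+S(K_i^{l})\mathtt b_{il}=0$, i.e.\ $S(\mathtt b_{il})=-K_i^{-l}\mathtt b_{il}$. Finally, setting $\mathtt a_{il}:=\omega(\mathtt b_{il})$ and using that $\omega$ is an algebra automorphism with $(\omega u,\omega v)_L=(u,v)_L$ and $\Delta\circ\omega=(\omega\otimes\omega)\circ\Delta^{\mathrm{op}}$, all the $\mathtt a$-statements transfer from the $\mathtt b$-statements.

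The main obstacle is the inductive proof that $(\ ,\ )_L$ is non-degenerate on $U_l$, i.e.\ the orthogonal-by-type decomposition obtained from the Wick-type evaluation of the form; in particular the isotropic case $a_{ii}=0$, where the quantum twists all collapse to $1$ and $A_i$ is commutative, requires care in tracking the one-dimensional blocks $\Q(q)\mathtt b_{i\lambda}$ and the non-vanishing of the self-pairings $(\mathtt b_{ik},\mathtt b_{ik})_L$ on which their non-degeneracy rests.
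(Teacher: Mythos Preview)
The paper does not give its own proof of this proposition; it is quoted as a result of Bozec \cite{Bozec2014b,Bozec2014c}. Your sketch follows the natural route (and is, in outline, Bozec's): work inside the rank-one subalgebra $A_i$, show by induction on $l$ that $(\ ,\ )_L$ is non-degenerate on the ``decomposable'' part $U_l\subset W_l$, and take $\mathtt b_{il}$ to be the unique element of $(f_{il}+U_l)\cap U_l^{\perp}$; primitivity (5) then drops out of orthogonality via $(x,yz)_L=(\varrho(x),y\otimes z)_L$, and (1), (6), (7) are formal consequences.

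There is one genuine gap. Your argument for (4) appeals to ``the bar-compatibility of $(\ ,\ )_L$'', but no such compatibility is available here: under the standing assumption $\nu_{il}\in 1+q\Z_{\ge0}[[q]]$ the scalars $(f_{il},f_{il})_L=\nu_{il}$ need not satisfy $\overline{\nu_{il}}=\nu_{il}$, so one cannot expect a relation of the form $(\overline x,\overline y)_L=\overline{(x,y)_L}$, and hence cannot conclude that $\overline{\mathtt b}_{il}\in U_l^{\perp}$ this way. The repair is to argue through primitivity rather than orthogonality. Your own construction already shows that the space of $\varrho$-primitive elements in $W_l$ equals $U_l^{\perp}=\Q(q)\,\mathtt b_{il}$ (any primitive $x$ satisfies $(x,yz)_L=0$ for $y\in W_m$, $z\in W_{l-m}$, $0<m<l$), so it suffices to check that $\overline{\mathtt b}_{il}$ is again $\varrho$-primitive; combined with (3) this forces $\overline{\mathtt b}_{il}=\mathtt b_{il}$. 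Showing that bar preserves $\varrho$-primitivity is where the actual work lies: one introduces, as in \cite[1.2.10]{Lusztig}, a suitable $\Q$-linear bar involution on $\mathscr F\otimes\mathscr F$ (compatible with the twisted product) which intertwines $\varrho$ with the bar on $\mathscr F$. This is a statement about $\varrho$ alone and does not require any bar-symmetry of the form $(\ ,\ )_L$.
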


The elements $\mathtt a_{il}$, $\mathtt b_{il}$ $((i,l) \in I^{\infty})$  are called the {\it primitive generators} of the  quantum Borcherds-Bozec algebra $U_{q}(\g)$.

\vskip 3mm

By setitng $\tau_{il}=(\mathtt a_{il},\mathtt a_{il})_L=(\mathtt b_{il},\mathtt b_{il})_L$, we get the following commutation relations in $U_q(\g)$
\begin{equation}\label{news}
\mathtt a_{il}\mathtt b_{jk}-\mathtt b_{jk}\mathtt a_{il}=\delta_{ij}\delta_{lk}\tau_{il}(K_i^{l}-K_i^{-l}).
\end{equation}

\vskip 3mm
Let $\mathcal C_{l}$ (resp. $\mathcal P_{l}$) be the set of compositions (resp. partitions) of $l$. For $i\in I^{\text{im}}$, we define
\begin{equation*}
\mathscr C_{i,l}=\begin{cases} \mathcal C_{l}& \text{if} \ i\in I^{\text{im}}\backslash I^{\text{iso}}, \\ \mathcal P_{l}& \text{if} \ i\in I^{\text{iso}} \end{cases}
\end{equation*}
and $\mathscr C_i=\bigsqcup_{l\geq 0}\mathscr C_{i,l}$. For $i\in I^{\text{re}}$, we just put $\mathscr C_{i,l}=\{l\}$.
\vskip 3mm

  Assume that $i\in I^{\text{im}}$.  Let  $\mathbf c =(c_1,\cdots,c_t)\in \mathscr C_{i,l}$  and set
  $$ \mathtt b_{i,\mathbf c}=\mathtt b_{ic_1}\cdots\mathtt b_{ic_t} , \ \mathtt a_{i,\mathbf c}=\mathtt a_{ic_1}\cdots\mathtt a_{ic_t} \ \text{and} \ \tau_{i,\mathbf c}=\tau_{ic_1}\cdots\tau_{ic_t} .$$
 Note that $\{ \mathtt b_{i,\mathbf c} \mid \mathbf c \in \mathscr C_{i,l}\}$ forms a basis of  $U^-_{-l \alpha_{i}}$. For each $i\in I^{\text {re}}$, we put $\mathtt b_{i1}=f_{i1}$, $\mathtt a_{i1}=e_{i1}$ and $\tau_i=\nu_i$.
Sometimes, we simply write $\mathtt a_i$ (resp. $\mathtt b_i$) in this case.
\vskip 3mm


\vskip 3mm

\begin{remark} \label{lem:prop3.3} \hfill

\vskip 2mm

{\rm
(1) Each $\lambda\in\mathcal P_l$ can be written as the form $\lambda=1^{\lambda_1}2^{\lambda_2}\cdots l^{\lambda_l}$, where $\lambda_k$ are non-negative integers such that $\lambda_1+2\lambda_2+\cdots+l\lambda_l=l$. For $i\in I^{\text{iso}}$, we have
$$\mathtt b_{il}=f_{il}-\sum_{\lambda\in \mathcal P_l\backslash (l)} \frac{1}{\prod_{k=1}^{l} \lambda_k!} \mathtt b_{i,\lambda}.$$

Note that assumption (\ref{eq:assumption}) implies $\nu_{il}\equiv 1 \ (\text{mod} \ q)$. Hence we have
$\tau_{il}\equiv \frac{1}{l} \ (\text{mod} \ q)$
by the following equation
$$\sum_{\lambda\in \mathcal P_l} \frac{1}{\prod_{k=1}^{l} k^{\lambda_k}\lambda_k!}=1.$$

(2) Under the assumption (\ref{eq:assumption}), if $i\in I^{\text{im}}\backslash I^{\text{iso}}$, it was shown in \cite[Lemma 3.32]{Bozec2014c} that $\tau_{il}\equiv 1 \ (\text{mod} \ q)$
for all $l\geq 1$.

\vskip 2mm

}\end{remark}

\vskip 3mm

We now give an alternative presentation  of the quantum Borcherds-Bozec algebra $U_q(\g)$.

\begin{theorem} \label{alternative}
{\rm
The quantum Borcherds-Bozec algebra $U_{q}(\g)$ is generated by the primitive generators
$\mathtt a_{il}, \mathtt b_{il}$ $((i,l) \in I^{\infty})$ and $q^h$ $(h\in P^{\vee})$ subject to the defining relations
\begin{equation} \label{eq:newrels}
\begin{aligned}
& q^0=\mathbf 1,\quad q^hq^{h'}=q^{h+h'} \ \ \text{for} \ h,h' \in P^{\vee} \\
& q^h \mathtt a_{jl}q^{-h} = q^{l \langle h, \alpha_j \rangle} \mathtt a_{jl}, \ \ q^h \mathtt b_{jl}q^{-h} = q^{-l \langle h, \alpha_j \rangle } \mathtt b_{jl}\ \ \text{for} \ h \in P^{\vee}, (j,l)\in I^{\infty}, \\
& \mathtt a_{il}\mathtt b_{jk}-\mathtt b_{jk}\mathtt a_{il}=\delta_{ij}\delta_{lk}\tau_{il}(K_i^{l}-K_i^{-l}), \\
& \sum_{r+s=1-la_{ij}}(-1)^r
{\mathtt a_i}^{(r)}\mathtt a_{jl}\mathtt a_i^{(s)}=0 \ \ \text{for} \ i\in
I^{\text{re}},(j,l)\in I^{\infty} \ \text {and} \ i \neq (j,l), \\
& \sum_{r+s=1-la_{ij}}(-1)^r
{\mathtt b_i}^{(r)}\mathtt b_{jl}\mathtt b_i^{(s)}=0 \ \ \text{for} \ i\in
I^{\text{re}},(j,l)\in I^{\infty} \ \text {and} \ i \neq (j,l), \\
& \mathtt a_{ik}\mathtt a_{jl}-\mathtt a_{jl}\mathtt a_{ik} = \mathtt b_{ik}\mathtt b_{jl}-\mathtt b_{jl}\mathtt b_{ik} =0 \ \ \text{for} \ a_{ij}=0.
\end{aligned}
\end{equation}
}
\end{theorem}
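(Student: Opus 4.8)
The plan is to show that the given presentation in terms of primitive generators is equivalent to the defining presentation of $U_q(\g)$ in terms of the Chevalley generators $e_{il}, f_{il}$ and $q^h$. Denote by $\widetilde U$ the $\Q(q)$-algebra presented by the generators $\mathtt a_{il}, \mathtt b_{il}, q^h$ and the relations \eqref{eq:newrels}. First I would construct a homomorphism $\Phi\colon \widetilde U \to U_q(\g)$ by sending the abstract generators to the actual primitive generators $\mathtt a_{il}, \mathtt b_{il}$ (which exist by Proposition \ref{prim}) and $q^h$. For this one must check that each relation in \eqref{eq:newrels} holds in $U_q(\g)$: the first two are immediate from the weight grading and the $q^h$-action (using $|\mathtt b_{il}| = -l\alpha_i$); the third is exactly \eqref{news}; the last two (quantum Serre relations and commutativity when $a_{ij}=0$) hold because for $i \in I^{\mathrm{re}}$ we have $\mathtt b_{i} = f_{i1}$, so these are literally the relations \eqref{eq:rels} already imposed in $\widehat U$, together with Lemma \ref{lem:r2} in the $a_{ij}=0$ cases with imaginary indices. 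Surjectivity of $\Phi$ follows from Proposition \ref{prim}(1): the $\mathtt b_{il}$ generate $U^-$, the $\mathtt a_{il}$ generate $U^+$, and the $q^h$ generate $U^0$, and by the triangular decomposition these three subalgebras generate all of $U_q(\g)$.

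Next I would construct the inverse homomorphism $\Psi\colon U_q(\g) \to \widetilde U$. By Remark \ref{rmk:SV} (i.e., \cite[Lemma 3.2]{SV1999}), $U_q(\g)$ is the quotient of $\widehat U$ by the Drinfeld relations \eqref{drinfeld}, and it suffices to check these relations on the generators $e_{il}, f_{il}, q^h$ of $\widehat U$ --- equivalently, since the primitive generators and Chevalley generators generate the same subalgebras (Proposition \ref{prim}(1)), it suffices to verify a presentation. Concretely, by Proposition \ref{prim}(3) each $f_{il}$ (resp. $e_{il}$) can be written as a polynomial in the $\mathtt b_{ik}$ (resp. $\mathtt a_{ik}$) with $k \le l$, and conversely; so one defines $\Psi$ on the $\widehat U$-generators by these polynomial expressions in the abstract $\mathtt a, \mathtt b, q^h$, and must check that the relations \eqref{eq:rels} of $\widehat U$ together with the Drinfeld relations \eqref{drinfeld} are consequences of \eqref{eq:newrels}. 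The $q^h$-commutation relations and the real-index quantum Serre relations of \eqref{eq:rels} are among \eqref{eq:newrels} verbatim. The Drinfeld relations, when unwound using the co-multiplication formulas \eqref{eq:comult} and Proposition \ref{prim}(5)--(6), reduce (by a standard Drinfeld-double computation, as in \cite{SV1999}) to the cross relations between $e$'s and $f$'s; expressed in primitive generators these become exactly the relation $\mathtt a_{il}\mathtt b_{jk} - \mathtt b_{jk}\mathtt a_{il} = \delta_{ij}\delta_{lk}\tau_{il}(K_i^l - K_i^{-l})$ of \eqref{eq:newrels}, using that $(\mathtt b_{il}, z)_L = 0$ for $z \in \Q(q)\langle f_{i1},\dots,f_{i,l-1}\rangle$ (Proposition \ref{prim}(2)) kills the off-diagonal pairings. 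Then $\Psi \circ \Phi = \mathrm{id}$ and $\Phi \circ \Psi = \mathrm{id}$ on generators, so $\Phi$ is an isomorphism.

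The main obstacle is the careful verification that the Drinfeld relations \eqref{drinfeld}, imposed on \emph{all} $a, b \in \widehat U^{\le 0}$ but (by Remark \ref{rmk:SV}) needing to be checked only on generators, are equivalent --- modulo the relations already present --- to the single family of primitive cross relations \eqref{news}. This requires expanding $\Delta$ on the primitive generators via Proposition \ref{prim}(6), which is clean because $\mathtt b_{il}$ is ``primitive'' (only two terms in $\Delta$), whereas the analogous computation directly with $f_{il}$ involves the full sum $\sum_{m+n=l} q_{(i)}^{-mn} f_{im}K_i^n \otimes f_{in}$ and is genuinely messy; this is precisely the payoff of working with primitive generators and the reason the new presentation is worth stating. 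One subtlety to watch is the isotropic case $i \in I^{\mathrm{iso}}$ where $a_{ii} = 0$: there the relation $\mathtt b_{ik}\mathtt b_{il} = \mathtt b_{il}\mathtt b_{ik}$ from the last line of \eqref{eq:newrels} must be shown consistent with the definition of $\mathtt b_{il}$ in terms of $f_{il}$ (Remark \ref{lem:prop3.3}(1)) and with Lemma \ref{lem:r2}; here the commutativity of the $\mathtt b_{ik}$ among themselves is exactly what makes $\{\mathtt b_{i,\lambda} \mid \lambda \in \mathcal P_l\}$ (indexed by partitions, not compositions) a basis of $U^-_{-l\alpha_i}$, which must be used to count dimensions and confirm that no relations are lost.
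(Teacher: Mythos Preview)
Your overall architecture (build $\Phi:\widetilde U\to U_q(\g)$, then an inverse $\Psi$) is sound and in fact more careful than the paper's proof, which essentially only verifies that the relations \eqref{eq:newrels} hold in $U_q(\g)$ and appeals to Remark~\ref{rmk:SV} for the cross relations. However, there is a genuine gap in your verification of the Serre relations, and it is exactly the gap that the paper's Theorem~\ref{r1} is designed to fill.

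You write that the quantum Serre relations in \eqref{eq:newrels} ``are literally the relations \eqref{eq:rels} already imposed in $\widehat U$'' because $\mathtt b_i=f_{i1}$ for $i\in I^{\mathrm{re}}$. That takes care of the outer factors $\mathtt b_i^{(r)},\mathtt b_i^{(s)}$, but the \emph{middle} factor is $\mathtt b_{jl}$, and for $j\in I^{\mathrm{im}}$ one has $\mathtt b_{jl}\neq f_{jl}$ in general (Proposition~\ref{prim}(3)). So $\sum_{r+s=1-la_{ij}}(-1)^r f_i^{(r)}\mathtt b_{jl}f_i^{(s)}$ is \emph{not} among the relations \eqref{eq:rels}. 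The paper's argument is to expand $\mathtt b_{jl}=\sum_{\mathbf c}\beta_{\mathbf c}f_{j,\mathbf c}$ over compositions $\mathbf c$ of $l$ and observe that each resulting term
\[
\sum_{r+s=1-la_{ij}}(-1)^r f_i^{(r)}f_{j,\mathbf c}f_i^{(s)}
=\mathtt F_{i,j,\,1-la_{ij},\,l,\,\mathbf c,\,\pm1}
\]
lies in the radical $\mathscr R$ by Theorem~\ref{r1} (the higher order Serre relations), hence vanishes in $U^-$. This is the whole point of Section~1, and you have bypassed it. The same issue reappears in your inverse direction: the Serre relations of \eqref{eq:rels} with $f_{jl}$ in the middle are likewise not ``verbatim'' among \eqref{eq:newrels}; one must expand $f_{jl}$ in the $\mathtt b_{jk}$'s and again invoke Theorem~\ref{r1} (which is intrinsic to $\mathscr R$, independent of the generating set). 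Your treatment of the $a_{ij}=0$ commutativity via Lemma~\ref{lem:r2} has the same flavor and does go through after expansion, but you should say so rather than calling it literal.
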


\begin{proof}

As is the case with ${\mathtt a}_{il}$ and ${\mathtt b}_{il}$.
For a composition or partition $\mathbf{c} = (c_{1}, \ldots, c_{t})$, we write
$e_{i, \mathbf{c}} = e_{ic_1} \cdots e_{ic_t}$ and
$f_{i, \mathbf{c}} = f_{ic_1} \cdots f_{ic_t}$.

\vskip 2mm

Recall that each ${\mathtt a}_{jl}$ (resp. ${\mathtt b}_{jl}$) can be written as a homogeneous polynomial in
$e_{jk}$'s (resp. $f_{jk}$'s) for $1 \le k \le l$.  Thus we may write
$${\mathtt a}_{jl} = \sum_{\mathbf c} \alpha_{\mathbf c} \,  e_{j, \mathbf c}, \quad
{\mathtt b}_{jl} = \sum_{{\mathbf c}'} \beta_{{\mathbf c}'} \, f_{j,{ \mathbf c}'}, $$
where $\mathbf c$ and ${\mathbf c}'$ are compositions (or partitions) of $l$.

Then we get
\begin{equation*}
\begin{aligned}
q^{h} \, {\mathtt a}_{jl} \, q^{-h} &= q^{h} \, (\sum_{\mathbf c} \, \alpha_{\mathbf c} \, e_{j, \mathbf {c}} )q^{-h}
 =  \sum_{\mathbf c} \, \alpha_{\mathbf c} \, q^{h} e_{j, \mathbf{c}} q^{-h} \\
&  = \sum_{\mathbf c} \, \alpha_{\mathbf c} \, q^{h} \, e_{jc_1} e_{jc_2} \cdots e_{jc_t}\, q^{-h} \\
& =  \sum_{\mathbf c} \, \alpha_{\mathbf c} \ q^{c_1 \, \langle h, \alpha_j \rangle} \, e_{jc_{1}}
\,q^{c_2 \, \langle h, \alpha_j \rangle} \, e_{jc_{2}}  \cdots q^{c_t \, \langle h, \alpha_j \rangle} e_{jc_{t}} \\
& = q^{l \langle h, \alpha_j \rangle}  \sum_{\mathbf c} \, \alpha_{\mathbf c} \, e_{j, \mathbf {c}}
=  q^{l \langle h, \alpha_j \rangle} {\mathtt  a}_{jl}.
\end{aligned}
\end{equation*}

Similarly, we can show $q^{h} {\mathtt b}_{jl} q^{-h} = q^{- l \langle h, \alpha_j \rangle} \, {\mathtt b}_{jl}$.

\vskip 3mm

Since $U_{q}(\g)$ is a Hopf algebra, by \cite[Lemma 3.2]{SV1999}, we have only to impose the relations \eqref{drinfeld} on the generators, which gives  the relations in the third line of \eqref{eq:newrels}.

\vskip 3mm

For the quantum Serre relations, for $i \in I^{\text{re}}$, $i \neq (j,l)$,  by Theorem \ref{r1}, we have
\begin{equation*}
\begin{aligned}
& \sum_{r + s = 1- l a_{ij}} (-1)^{r} {\mathtt a}_{i}^{(r)} \, {\mathtt a}_{jl} \, {\mathtt a}_{i}^{(s)} \\
& = \sum_{r + s = 1 - l a_{ij}} (-1)^{r} e_{i}^{(r)} \, \left(\sum_{\mathbf{c}} \alpha_{\mathbf c} e_{j, \mathbf{c}} \right)
\, e_{i}^{(s)} \\
& = \sum_{\mathbf{c}} \alpha_{\mathbf{c}} \left(\sum_{r + s = 1 - l a_{ij}} (-1)^{r} e_{i}^{(r)}
e_{j, \mathbf{c}} e_{i}^{(s)} \right)= 0.
\end{aligned}
\end{equation*}

\vskip 2mm

The other relations can be verified in a similar manner.
\end{proof}

\vskip 8mm

\section{Crystal bases and polarization}

\vskip 2mm

\begin{definition}
For $i\in I^{\text{im}}$, we define the linear maps $\delta_{i,\mathbf c}, \delta^{i,\mathbf c}:U^{-}\rightarrow U^{-}$ by
$$\varrho(x)=\sum_{\mathbf c\in \mathscr C_{i}}\delta_{i,\mathbf c}\otimes \mathtt b_{i,\mathbf c}+\ \text{terms of bidegree not in}\ Q_-\times -\N \alpha_i,$$
$$\varrho(x)=\sum_{\mathbf c\in \mathscr C_{i}}\mathtt b_{i,\mathbf c}\otimes\delta^{i,\mathbf c}+\ \text{terms of bidegree not in}\ -\N \alpha_i\times Q_-.$$
\end{definition}

\vskip 3mm

Let $i\in I^{\text{im}}$, $l>0$. For any homogeneous elements  $x,y,z\in U^-$ and $\mathbf c=(c_1,\cdots,c_t)\in \mathscr C_i$, we have the following equations
\begin{equation}
\delta^{i,l}(xy)=\delta^{i,l}(x)y+q^{l(\alpha_i,|x|)}x\delta^{i,l}(y),
\end{equation}
\begin{equation}
\delta^{i,l}(\mathtt b_{i,\mathbf c})=\sum_{k:c_k=l}q_{(i)}^{-2l\sum_{j<k}c_j}\mathtt b_{i,\mathbf c\backslash c_k},
\end{equation}
\begin{equation}\label{c}
 \left[\mathtt a_{il},z\right]=\tau_{il}\left(\delta_{i,l}(z)K_i^l-K_i^{-l}\delta^{i,l}(z)\right),
 \end{equation}
where $\mathbf c\backslash c_k=(c_1,\cdots,\widehat{c}_k,\cdots,c_r)$ means removing $c_k$ from $\mathbf c$. From now on, we will denote the operator $\delta^{i,l}$ by $e'_{i,l}$.

\vskip 3mm

In \cite{Bozec2014c}, Bozec showed that  every  $u\in U^-$ can be written uniquely as
$$u=\sum_{\mathbf c\in \mathscr C_i}\mathtt b_{i,\mathbf c}u_{\mathbf c},$$
where $e'_{i,l}\, u_{\mathbf{c}} = 0$ for all $l\ge 1$ and $\mathbf c \in \mathscr C_i$. Moreover, if $u$ is homogenous, then every $u_{\mathbf c}$ is homogenous. Then the Kashiwara operators are defined by
\begin{equation*}
\begin{aligned}
& \widetilde e_{il}u=\begin{cases}\displaystyle\sum_{\mathbf c:c_1=l}\mathtt b_{i,\mathbf c\backslash c_1}u_{\mathbf c}\qquad
\quad \ \text{if}\ i\notin I^{\text{iso}},\\
\displaystyle\sum_{\mathbf c\in \mathscr C_i}\sqrt{\frac{m_l(\mathbf c)}{l}}\mathtt b_{i,\mathbf c\backslash l}u_{\mathbf c}\ \ \text{if}\ i\in I^{\text{iso}},\end{cases}\\
& \widetilde f_{il}u=\begin{cases}\displaystyle\sum_{\mathbf c\in \mathscr C_i}\mathtt b_{i,\mathbf (l,c)}u_{\mathbf c}\qquad\qquad \quad \ \ \text{if}\ i\notin I^{\text{iso}},\\
\displaystyle\sum_{\mathbf c\in \mathscr C_i}\sqrt{\frac{l}{m_l(\mathbf c)+1}}\mathtt b_{i,\mathbf c\cup l}u_{\mathbf c}\ \ \text{if}\ i\in I^{\text{iso}},\end{cases}
\end{aligned}
\end{equation*}
where $m_{l}(\mathbf{c}) = \# \{k \mid c_k = l\}$.

\vskip 3mm
\begin{remark}
Note that the square roots appear in the above definition. So we need to consider an extension $\F$ of $\Q$ that contains all the necessary square roots (see \cite[Remark 3.12]{Bozec2014c}).
\end{remark}

Let $\mathbb A_0=\{f\in \F(q) \mid f \ \text{is regular at} \ q=0\}$, and let
$\mathcal L(\infty)$ be the $\mathbb A_0$-submodule of $U^-$ spanned by the elements
$\widetilde{f}_{i_1,l_1}\cdots\widetilde{f}_{i_r,l_r}  \mathbf{1}$ for $r\geq 0$ and $(i_k,l_k)\in I^{\infty}$, where the Kashiwara operators $\widetilde{f}_i$ for $i\in I^{\text{re}}$ have been defined in \cite{Kas91}. Set
$$\mathcal B(\infty)=\{\widetilde{f}_{i_1,l_1}\cdots\widetilde{f}_{i_r,l_r} \mathbf{1}  \  \text{mod} \ q\mathcal L(\infty) \mid r \ge 0, (i_k, l_k) \in I^{\infty}  \}\subseteq
{\mathcal L}(\infty)/ q{\mathcal L}(\infty).$$
Then $(\mathcal L(\infty),\mathcal B(\infty))$ is a crystal basis of $U^-$.

\vskip 3mm

By \cite[Lemma 3.33]{Bozec2014c},  \cite[Proposition 5.1.2]{Kas91}, \cite[Lemma 7.39]{JKK2005}
and Remark \ref{lem:prop3.3},  we obtain:

\vskip 3mm

\begin{proposition}\label{P2}\

{\rm\begin{itemize}
\item [(i)] $(\mathcal L(\infty),\mathcal L(\infty))_L\subseteq \mathbb A_0.$

\item [(ii] $\mathcal L(\infty)=\{u\in U^- \mid  (u,\mathcal L(\infty))_L\subseteq\mathbb A_0\}=\{u\in U^- \mid  (u,u)_L\in\mathbb A_0\}$.
\end{itemize}

Let $( \ , \ )_L^0$ denote the $\F$-valued inner product on $\mathcal L(\infty)/ q\mathcal  L(\infty)$
obtained from $(\ , \ )_{L}$ on $\mathcal L(\infty)$ by setting $q=0$.
\begin{itemize}
\item [(iii)] $(\widetilde e_{il}u,v)_L^0=(u,\widetilde f_{il}v)_L^0$ for $u,v\in \mathcal L(\infty)/ q\mathcal  L(\infty)$ and $(i,l)\in I^{\infty}$.

\item [(iv)] $\mathcal B(\infty)$ is an orthonormal basis with respect to $( \ , \ )_L^0$. In particular, $( \ , \ )_L^0$ is positive definite.

\end{itemize}
}\end{proposition}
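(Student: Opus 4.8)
The plan is to follow the strategy of \cite[Proposition 5.1.2]{Kas91} and \cite[Lemma 7.39]{JKK2005}, adapting each step to the presence of infinitely many primitive generators $\mathtt b_{il}$, $\widetilde f_{il}$ in the imaginary directions. The four statements are proved in the listed order, since each one feeds the next: (i) gives that the inner product is well-defined mod $q$, (ii) identifies $\mathcal L(\infty)$ intrinsically as a ``unit ball'' for $(\ ,\ )_L$, (iii) transfers the adjunction between $\widetilde e_{il}$ and $\widetilde f_{il}$ to the classical limit, and (iv) concludes that $\mathcal B(\infty)$ is orthonormal. I would carry out a simultaneous induction on the weight $\beta\in Q_+$ (equivalently on $|\beta|$), exactly as in Kashiwara's grand-loop argument, using Bozec's unique decomposition $u=\sum_{\mathbf c\in\mathscr C_i}\mathtt b_{i,\mathbf c}u_{\mathbf c}$ with $e'_{i,l}u_{\mathbf c}=0$ as the organizing device for each fixed $i\in I^{\mathrm{im}}$ (and the analogous string decomposition for $i\in I^{\mathrm{re}}$, already available from \cite{Kas91}).

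First I would establish (i) and (ii) together. The key algebraic input is the commutation formula \eqref{c} relating $\mathtt a_{il}$ to $\delta_{i,l}$ and $e'_{i,l}=\delta^{i,l}$, together with Lemma \ref{radical}(a),(b), which let one compute $(\mathtt b_{i,\mathbf c}u_{\mathbf c},\ \cdot\ )_L$ in terms of inner products of $u_{\mathbf c}$ against elements $e'_{i,\mathbf d}(\cdot)$ of strictly smaller weight. Using $\tau_{il}\equiv 1$ or $\tau_{il}\equiv 1/l\pmod q$ (Remark \ref{lem:prop3.3}) one checks that the transition matrix between the family $\{\mathtt b_{i,\mathbf c}\}$ and the dual family under $(\ ,\ )_L$ is, modulo $q$, upper-triangular unipotent up to the combinatorial factors $\sqrt{m_l(\mathbf c)/l}$ that are built into $\widetilde f_{il}$ precisely so that $\widetilde f_{il}$ becomes (mod $q$) the adjoint of $e'_{i,l}$ rescaled. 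This is exactly what makes $(\widetilde f_{i,l_1}\cdots\widetilde f_{i_r,l_r}\mathbf 1,\ \widetilde f_{j,m_1}\cdots)_L\in\mathbb A_0$ with the ``diagonal'' entries $\equiv 1\pmod q$. The inclusion $\mathcal L(\infty)\subseteq\{u:(u,u)_L\in\mathbb A_0\}$ then follows, and the reverse inclusion is the standard argument: if $(u,u)_L\in\mathbb A_0$ but $u\notin\mathcal L(\infty)$, look at the lowest-order pole, decompose via some $e'_{i,l}$, and derive a contradiction from the inductive hypothesis at smaller weight.

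Next, (iii): write $u,v\in\mathcal L(\infty)/q\mathcal L(\infty)$ and reduce to the case $v=\mathtt b_{i,\mathbf c}v_{\mathbf c}$ in Bozec's decomposition with respect to the fixed index $i$. Then $\widetilde e_{il}$ acting on the left strips a part equal to $l$ from $\mathbf c$, while $\widetilde f_{il}$ glues one on; using Lemma \ref{radical} and \eqref{c} to move $\mathtt a_{il}$ (the ``adjoint'' of $\mathtt b_{il}$ under $(\ ,\ )_L$ up to $\tau_{il}$) across the inner product, one gets $(\widetilde e_{il}u,v)_L=(u,\widetilde f_{il}v)_L$ up to terms in $q\mathbb A_0$, where the normalizing square roots $\sqrt{m_l(\mathbf c)/l}$ and $\sqrt{l/(m_l(\mathbf c)+1)}$ cancel against the factor $l$ coming from $\tau_{il}\equiv 1/l$ in the isotropic case (and against $\tau_{il}\equiv1$ in the non-isotropic case, where no square roots appear). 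Setting $q=0$ gives (iii). Finally (iv) is formal: the monomials $\widetilde f_{i_1,l_1}\cdots\widetilde f_{i_r,l_r}\mathbf 1\bmod q\mathcal L(\infty)$ span $\mathcal L(\infty)/q\mathcal L(\infty)$ by definition, so it suffices to see they are orthonormal for $(\ ,\ )_L^0$; this is proved by induction on weight using (iii) to peel off one $\widetilde f_{il}$ at a time, reducing to the inductive orthonormality at smaller weight together with the fact that $\widetilde e_{il}\widetilde f_{jm}$ on $\mathcal B(\infty)$ behaves like a ``matrix unit'' modulo lower terms — this is where \cite[Lemma 3.33]{Bozec2014c} is invoked. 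Positive definiteness is then immediate from orthonormality.

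The main obstacle will be the isotropic case $a_{ii}=0$: here the primitive generators $\mathtt b_{il}$ commute, one must pass from compositions to partitions $\mathscr C_{i,l}=\mathcal P_l$, and the Kashiwara operators carry the irrational normalizations $\sqrt{m_l(\mathbf c)/l}$. Keeping track, modulo $q$, of the interaction between these square-root factors, the multinomial coefficients $\prod_k\lambda_k!$ appearing in $\mathtt b_{il}=f_{il}-\sum_{\lambda\neq(l)}\frac{1}{\prod\lambda_k!}\mathtt b_{i,\lambda}$, and the value $\tau_{il}\equiv 1/l\pmod q$ is delicate; the identity $\sum_{\lambda\in\mathcal P_l}\bigl(\prod_k k^{\lambda_k}\lambda_k!\bigr)^{-1}=1$ from Remark \ref{lem:prop3.3} is what ultimately makes the bookkeeping close, and verifying that it does is the technical heart of the argument. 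Everything else is a routine, if lengthy, transcription of the grand-loop induction of \cite{Kas91, JKK2005}.
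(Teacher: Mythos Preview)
Your proposal is correct and matches the paper's approach: the paper does not give a self-contained proof of Proposition~\ref{P2} but simply attributes it to \cite[Lemma 3.33]{Bozec2014c}, \cite[Proposition 5.1.2]{Kas91}, \cite[Lemma 7.39]{JKK2005} together with Remark~\ref{lem:prop3.3}, and what you have outlined is precisely the content of those references adapted to the Borcherds--Bozec setting. In particular you have correctly isolated the two inputs that go beyond \cite{Kas91,JKK2005}: Bozec's decomposition $u=\sum_{\mathbf c}\mathtt b_{i,\mathbf c}u_{\mathbf c}$ with $e'_{i,l}u_{\mathbf c}=0$, and the mod-$q$ values $\tau_{il}\equiv 1$ (non-isotropic) or $\tau_{il}\equiv 1/l$ (isotropic) from Remark~\ref{lem:prop3.3}, which are exactly what make the square-root normalizations in $\widetilde e_{il},\widetilde f_{il}$ produce an orthonormal basis at $q=0$.
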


\vskip 3mm

Let $\lambda\in P^+$ and let $V(\lambda)$ be the irreducible highest weight $U_q(\g)$-module with highest weight $\lambda$ and highest weight vector $v_{\lambda}$. Then we have  a $U^-_q(\g)$-module isomorphism (cf. \cite{BSV2016}, \cite{KK19})
\begin{equation}\label{V}
\begin{aligned}
 V(\lambda)\simeq U^-_q(\g) \bigg/ 
(\sum_{i\in I^{\text{re}}}U^-_q(\g)f_i^{\lambda(h_i)+1}+
\sum_{\substack{i\in I^{\text{im}}, \lambda(h_i)=0\\(i,l)\in I^{\infty}}}U^-_q(\g)f_{il} ).
\end{aligned}
\end{equation}

\vskip 3mm

Let $i\in I^{\text{im}}$ and $\lambda\in P^+$. In \cite{Bozec2014c}, Bozec showed that every $v\in V(\lambda)_\mu$ has a decomposition of the following form
$$v=\sum_{\mathbf c\in \mathscr C_i}\mathtt b_{i,\mathbf c}v_{\mathbf c},$$
where $v_{\mathbf c}\in V(\lambda)_{\mu+|\mathbf c|\alpha_i}$ and $e_{il}v_{\mathbf c}=0$ for all $l\geq 1$ and $\mathbf c\in \mathscr C_i$. Moreover, if we omit the terms $\mathtt b_{i,\mathbf c}v_{\mathbf c}$ with $|\mathbf c|\neq 0$ and $(\mu +|\mathbf c|\alpha_i,\alpha_i)=0$, which are equal to zero trivially, then the decomposition of $v$ is unique.

\vskip 3mm

Define the Kashiwara operators on $V(\lambda)$ by
\begin{equation*}
\begin{aligned}
& \widetilde e_{il}v=\begin{cases}\displaystyle\sum_{\mathbf c:c_1=l}\mathtt b_{i,\mathbf c\backslash c_1}v_{\mathbf c}& \text{if}\ i\notin I^{\text{iso}},\\
\displaystyle\sum_{\mathbf c\in \mathscr C_i}\sqrt{\frac{m_l(\mathbf c)}{l}}\mathtt b_{i,\mathbf c\backslash l}v_{\mathbf c}& \text{if}\ i\in I^{\text{iso}},\end{cases}\\
& \widetilde f_{il}v=\begin{cases}\displaystyle\sum_{\mathbf c\in\mathscr C_i}\mathtt b_{i,\mathbf (l,c)}v_{\mathbf c}& \text{if}\ i\notin I^{\text{iso}},\\
\displaystyle\sum_{\mathbf c\in \mathscr C_i}\sqrt{\frac{l}{m_l(\mathbf c)+1}}\mathtt b_{i,\mathbf c\cup l}v_{\mathbf c}& \text{if}\ i\in I^{\text{iso}}.\end{cases}
\end{aligned}
\end{equation*}

\vskip 3mm

Let $\mathcal L(\lambda)=\sum_{\iota_1,\cdots,\iota_s\in I^{\infty}}\mathbb A_0\widetilde{f}_{\iota_1}\cdots\widetilde{f}_{\iota_s}v_{\lambda}$ be an $\mathbb A_0$-submodule of $V(\lambda)$ and let
$$\mathcal B(\lambda)=\{\widetilde{f}_{\iota_1}\cdots\widetilde{f}_{\iota_s}v_{\lambda}\mid \iota_k\in I^{\infty}\}\backslash \{0\}\subseteq {\mathcal L}(\lambda)/ q{\mathcal L}(\lambda){\color{red}.}$$
Then  $(\mathcal L(\lambda),\mathcal B(\lambda))$ is a crystal basis of $V(\lambda)$.

\vskip 3mm

There exists a unique symmetric bilinear form $\{ -, - \}$ on $V(\lambda)$   such that
\begin{equation*}
\begin{aligned}
& \{v_{\lambda},v_{\lambda}\}=1, \\
& \{q^hv,v'\}=\{v,q^hv'\},\\
& \{\mathtt b_{il}v,v'\}=-\{v, K_i^{l}\mathtt a_{il}v'\} \ \ \text{if} \ i\in I^{\text{im}},\\
& \{\mathtt b_{i}v,v'\}=\frac{1}{q_i^{2}-1}\{v, K_i\mathtt a_{i}v'\} \ \ \text{if} \ i\in I^{\text{re}}\\
\end{aligned}
\end{equation*}
for every $v,v'\in V(\lambda)$ and $(i,l)\in I^{\infty}$.

\vskip 3mm

By \cite[Lemma 3.34]{Bozec2014c},  \cite[Proposition 5.1.1]{Kas91}, \cite[Corollary 7.37]{JKK2005} 
and Remark \ref{lem:prop3.3}, we obtain:

\vskip 3mm

\begin{proposition}\label{P3}\

{\rm\begin{itemize}
\item [(i)] $\{\mathcal L(\lambda),\mathcal L(\lambda)\}\subseteq \mathbb A_0.$
\item [(ivi] $\mathcal L(\lambda)=\{v\in V(\lambda) \mid  \{v,\mathcal L(\lambda)\}\subseteq\mathbb A_0\}=\{v\in V(\lambda) \mid  \{v,v\}\in\mathbb A_0\}$.
\end{itemize}

Let $\{ \ , \ \}_0$ denote the $\F$-valued inner product on $\mathcal L(\lambda)/ q\mathcal  L(\lambda)$ induced by $\{ \ , \ \}$ on $\mathcal L(\lambda)$.
\begin{itemize}
\item [(iii)] $\{\widetilde e_{il}u,v\}_0=\{u,\widetilde f_{il}v\}_0$ for $u,v\in \mathcal L(\lambda)/ q\mathcal  L(\lambda)$ and $(i,l)\in I^{\infty}$.
\item [(iv)] $\mathcal B(\lambda)$ is an orthonormal basis with respect to $\{ \ , \ \}_0$. In particular, $\{ \ , \ \}_0$ is positive definite.
\end{itemize}
}\end{proposition}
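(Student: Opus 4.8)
The plan is to run the argument of \cite[Proposition 5.1.1]{Kas91} together with its Borcherds-algebra refinement \cite[Corollary 7.37]{JKK2005}, in parallel with the $U^-$-version (Proposition \ref{P2}), feeding in Bozec's grand-loop output \cite[Lemma 3.34]{Bozec2014c} in place of the $U^-$-input \cite[Lemma 3.33]{Bozec2014c} and using the normalization estimates of Remark \ref{lem:prop3.3}. The four assertions are interlocked: (iii) is the ``$\bmod q$'' adjointness of the Kashiwara operators, (i) is the boundedness $\{\mathcal L(\lambda),\mathcal L(\lambda)\}\subseteq\mathbb A_0$, (iv) is orthonormality of $\mathcal B(\lambda)$ together with positive-definiteness of $\{\ ,\ \}_0$, and (ii) is a maximality statement that drops out once (i), (iii), (iv) are established. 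So the real work is a single simultaneous downward induction on the weight yielding (i), (iii) and (iv); afterwards (ii) is formal. Here I only indicate the new local computation forced by passing to the primitive generators.

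For a real index $i$ nothing changes: the relation $\{\mathtt b_iv,v'\}=\frac{1}{q_i^2-1}\{v,K_i\mathtt a_iv'\}$ is exactly Kashiwara's normalization, so the rank-one $U_{q_i}(\mathfrak{sl}_2)$-theory of \cite{Kas91} applies verbatim along each $i$-string. For an imaginary index $i$ I would fix a weight $\mu$ and, given homogeneous $u,v\in V(\lambda)_\mu$, use Bozec's decompositions $u=\sum_{\mathbf c\in\mathscr C_i}\mathtt b_{i,\mathbf c}u_{\mathbf c}$ and $v=\sum_{\mathbf d\in\mathscr C_i}\mathtt b_{i,\mathbf d}v_{\mathbf d}$ with $e_{il}u_{\mathbf c}=e_{il}v_{\mathbf d}=0$ for all $l\geq 1$. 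Applying $\{\mathtt b_{il}w,w'\}=-\{w,K_i^l\mathtt a_{il}w'\}$ repeatedly moves every $\mathtt b$ on the left across the form and converts it to an $\mathtt a$ on the right; then the commutation relation \eqref{news} (equivalently \eqref{c}) together with $\mathtt a_{il}u_{\mathbf c}=0$ lets me push each $\mathtt a_{il}$ through $\mathtt b_{i,\mathbf d}v_{\mathbf d}$, collapsing $\{\mathtt b_{i,\mathbf c}u_{\mathbf c},\mathtt b_{i,\mathbf d}v_{\mathbf d}\}$ into a sum over matchings of the parts of $\mathbf c$ with the parts of $\mathbf d$ of terms $\pm\,(\text{monomial in the }\tau_{il})\,(\text{power of }q_i)\,\{u_{\mathbf c'},v_{\mathbf d'}\}$, where $\mathbf c',\mathbf d'$ delete the matched parts (for $i\in I^{\text{iso}}$ one uses partitions and the multiplicities $m_l(\mathbf c)$, for $i\in I^{\text{im}}\backslash I^{\text{iso}}$ compositions and ordered matchings). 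Comparing this expansion of $\{\widetilde f_{il}u,v\}$ with that of $\{u,\widetilde e_{il}v\}$, the square-root factors $\sqrt{m_l(\mathbf c)/l}$ and $\sqrt{l/(m_l(\mathbf c)+1)}$ in the isotropic definitions of $\widetilde e_{il},\widetilde f_{il}$ (and the trivial factor otherwise), combined with $\tau_{il}\equiv 1$ resp. $\tau_{il}\equiv\frac1l$ $\pmod q$ from Remark \ref{lem:prop3.3}, make the two expansions agree modulo $q\mathbb A_0$, and the same estimate shows every coefficient lies in $\mathbb A_0$. Feeding this into the downward induction on $\mu$ --- with \cite[Lemma 3.34]{Bozec2014c} guaranteeing that $\mathcal L(\lambda)$ is spanned along strings and stable under the operators, and tracking the expansion along words $\widetilde f_{\iota_1}\cdots\widetilde f_{\iota_s}v_\lambda$ acting on $v_\lambda$ (with $\{v_\lambda,v_\lambda\}=1$ and $\widetilde e_{jk}v_\lambda=0$) --- yields (i), (iii) and the orthonormality of $\mathcal B(\lambda)$, hence (iv).

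Granting (i), (iii) and (iv), assertion (ii) follows by the usual maximality argument: $\mathcal L(\lambda)\subseteq\{v\in V(\lambda):\{v,\mathcal L(\lambda)\}\subseteq\mathbb A_0\}\subseteq\{v\in V(\lambda):\{v,v\}\in\mathbb A_0\}$ is immediate from (i), while for the reverse inclusion one takes $v$ with $\{v,v\}\in\mathbb A_0$, decomposes it along a suitable $i$-string, and uses (iii) together with positive-definiteness of $\{\ ,\ \}_0$ to force each string component into $\mathcal L(\lambda)$, inducting on the weight exactly as in \cite{Kas91,JKK2005}.

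The main obstacle is the imaginary-index local computation in the isotropic case: one must check that in the matching expansion the ``off-diagonal'' contributions --- matchings pairing a part $l$ of $\mathbf c$ with a part $l$ of $\mathbf d$ in a non-identity way, or routing through strictly smaller weights via the $K_i^{\pm l}$ factors --- all land in $q\mathbb A_0$, so that only the diagonal matching survives modulo $q$. This is where the precise content of Remark \ref{lem:prop3.3}(1), in particular $\mathtt b_{il}\equiv f_{il}\pmod q$ and the partition identity $\sum_{\lambda\in\mathcal P_l}\frac{1}{\prod_k k^{\lambda_k}\lambda_k!}=1$ forcing $\tau_{il}\equiv\frac1l$, must be combined carefully with the combinatorics of the multiplicities $m_l(\mathbf c)$ and with \cite[Lemma 3.34]{Bozec2014c}; the non-isotropic imaginary case runs the same way but is lighter, since there $\tau_{il}\equiv 1$ and the strings are indexed by compositions with trivial normalizing factors.
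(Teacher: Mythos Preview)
Your proposal is correct and follows essentially the same route as the paper: the paper does not give a self-contained argument for Proposition~\ref{P3} but simply records it as a consequence of \cite[Lemma 3.34]{Bozec2014c}, \cite[Proposition 5.1.1]{Kas91}, \cite[Corollary 7.37]{JKK2005} and Remark~\ref{lem:prop3.3}, which are exactly the ingredients you invoke. Your write-up is in fact more detailed than the paper's, spelling out how the primitive-generator commutation \eqref{news} and the congruences $\tau_{il}\equiv 1$ (resp.\ $\tau_{il}\equiv\tfrac{1}{l}$) $\pmod q$ from Remark~\ref{lem:prop3.3} feed into the imaginary-index local computation; this elaboration is consistent with how \cite{Bozec2014c} carries out the analogous step.
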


\vskip 3mm

The following proposition follows from Kashiwara's grand-loop argument, which describes the relations between $\mathcal B(\infty)$ and $\mathcal B(\lambda)$.

\begin{proposition}\label{P4}{\rm
Let $\pi_\lambda:U^-_q(\g)\rightarrow V(\lambda)$ be the $U^-_q(\g)$-module homomorphism given by $P\mapsto Pv_{\lambda}$. Then we have:
\begin{itemize}
\item [(i)] $\pi_\lambda(\mathcal L(\infty))=\mathcal L(\lambda)$; hence $\pi_{\lambda}$ induces the surjective homomorphism $$\overline \pi_{\lambda}:\mathcal L(\infty)/ q\mathcal  L(\infty)\rightarrow\mathcal L(\lambda)/ q\mathcal  L(\lambda).$$
\item [(ii)] $\{b\in \mathcal B(\infty)\mid \overline \pi_{\lambda}(b)\neq 0 \}$ is isomorphic to $\mathcal B(\lambda)$ under the map $\overline \pi_{\lambda}$.
\item [(iii)] If $b\in \mathcal B(\infty)$ satisfies $\overline \pi_{\lambda}(b)\neq 0$, then $\widetilde e_{il}\overline \pi_{\lambda}(b)=\overline \pi_{\lambda}(\widetilde e_{il}b)$.
\item [(iv)] $\widetilde f_{il}\circ\overline \pi_{\lambda}=\overline \pi_{\lambda}\circ\widetilde f_{il}$.

\end{itemize}
}\end{proposition}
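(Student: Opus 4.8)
The plan is to establish Proposition \ref{P4} by running Kashiwara's grand-loop argument in the Borcherds--Bozec setting, exactly as in \cite{Kas91, JKK2005}, with the adjustments forced by the infinitely many primitive generators $\mathtt b_{il}$ and the isotropic case. The skeleton is standard: one sets up a simultaneous induction on the height $|\beta|$ of the weight space, proving at each stage a family of mutually intertwined statements — (a) $\pi_\lambda(\mathcal L(\infty)_\beta) = \mathcal L(\lambda)$ and the commutation of $\widetilde f_{il}$ and $\widetilde e_{il}$ with $\pi_\lambda$ on the relevant weight spaces, (b) the compatibility of $\mathcal B(\infty)$ with $\mathcal B(\lambda)$ under $\overline\pi_\lambda$, and (c) the polarization estimates controlling $(\ ,\ )_L$ and $\{\ ,\ \}$ — so that each statement at level $|\beta|$ feeds the next. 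I would first record the base case ($\beta = 0$, where everything is trivial since $\mathcal L(\infty)_0 = \mathbb A_0 \cdot \mathbf 1$ and $\pi_\lambda(\mathbf 1) = v_\lambda$) and then carry out the inductive step.

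The inductive step proceeds in the usual order. First, using the decomposition $u = \sum_{\mathbf c \in \mathscr C_i} \mathtt b_{i,\mathbf c} u_{\mathbf c}$ with $e'_{i,l} u_{\mathbf c} = 0$ (Bozec's result, already quoted in Section 3), together with the corresponding decomposition on $V(\lambda)$, I would show $\pi_\lambda$ intertwines the two decompositions up to lower-height corrections; the key algebraic input is \eqref{c}, the commutation formula $[\mathtt a_{il}, z] = \tau_{il}(\delta_{i,l}(z)K_i^l - K_i^{-l}\delta^{i,l}(z))$, which lets one compare $e'_{i,l}$ on $U^-$ with the action of $\widetilde e_{il}$ on $V(\lambda)$. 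This yields part (iii) and part (iv) on weight spaces of height $|\beta|$, and the surjectivity statement $\pi_\lambda(\mathcal L(\infty)_\beta) = \mathcal L(\lambda)_{\lambda-\beta}$ in part (i), after invoking Proposition \ref{P2} and Proposition \ref{P3} (the polarization facts) to pass to the $q = 0$ reductions and to control the $\mathbb A_0$-lattices. Part (ii) then follows because $\mathcal B(\infty)$ is an orthonormal basis for $(\ ,\ )_L^0$ (Proposition \ref{P2}(iv)) and $\mathcal B(\lambda)$ for $\{\ ,\ \}_0$ (Proposition \ref{P3}(iv)), so $\overline\pi_\lambda$ sends basis elements either to basis elements or to $0$, and $\overline\pi_\lambda$ restricted to $\{b : \overline\pi_\lambda(b) \neq 0\}$ is a bijection onto $\mathcal B(\lambda)$ by a counting/orthogonality argument. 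The genuinely new bookkeeping over \cite{JKK2005} is that at an imaginary index $i$ one must index everything by compositions (or, when $i \in I^{\text{iso}}$, partitions) $\mathbf c \in \mathscr C_i$ rather than by a single integer, so the operators $\widetilde f_{il}$ add a part $l$ to $\mathbf c$ and the relevant combinatorial coefficients $\sqrt{l/(m_l(\mathbf c)+1)}$ must be tracked through the grand loop; the defining relation \eqref{V} of $V(\lambda)$ is what kills the terms $\mathtt b_{i,\mathbf c}v_{\mathbf c}$ with $\lambda(h_i) = 0$.

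The main obstacle, as the introduction itself flags, is the isotropic case $a_{ii} = 0$: there the bilinear form $(\ ,\ )_L$ is \emph{not} non-degenerate on $\mathscr F(i)$, the primitive generators satisfy $\mathtt b_{il} = f_{il} - \sum_{\lambda \in \mathcal P_l \setminus (l)} \frac{1}{\prod \lambda_k!}\mathtt b_{i,\lambda}$ with $\tau_{il} \equiv \frac1l \pmod q$ (Remark \ref{lem:prop3.3}), and the Kashiwara operators carry the square-root normalizations. One must check that these normalization factors are exactly the ones making $\widetilde e_{il}$ and $\widetilde f_{il}$ mutually adjoint at $q = 0$ (Proposition \ref{P2}(iii), Proposition \ref{P3}(iii)) and that they are compatible with $\overline\pi_\lambda$, which is precisely where the partition-indexed $\mathscr C_i$ and the $\sqrt{m_l(\mathbf c)/l}$ coefficients must be handled with care — this is the delicate verification alluded to by ``requires very subtle and delicate treatment.'' Once the grand loop closes, all four assertions of the proposition drop out; I would therefore present the proof as: (1) state the grand-loop induction hypothesis, (2) verify the base case, (3) carry out the inductive step using \eqref{c}, \eqref{V}, Proposition \ref{P2} and Proposition \ref{P3}, treating $i \in I^{\text{re}}$, $i \in I^{\text{im}} \setminus I^{\text{iso}}$, and $i \in I^{\text{iso}}$ in turn, and (4) read off (i)--(iv), with the bulk of the writing devoted to the isotropic sub-case.
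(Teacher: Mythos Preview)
Your proposal is correct and takes the same approach as the paper: the paper offers no proof of Proposition~\ref{P4} beyond the single sentence ``The following proposition follows from Kashiwara's grand-loop argument,'' referring implicitly to Bozec's execution of that argument in \cite{Bozec2014c} (see in particular \cite[Lemma 3.33, Lemma 3.34]{Bozec2014c}, already flagged in the introduction). Your outline is a faithful expansion of what that grand-loop entails in the Borcherds--Bozec setting, with the composition/partition bookkeeping and the isotropic normalizations correctly identified as the new ingredients beyond \cite{Kas91, JKK2005}.
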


\vskip 3mm

Let $(i,l)\in I^{\infty}$ and let $P\in U^-$. Then there exist unique $Q,R\in U^-$ such that
$$[\mathtt a_{il},P]=\tau_{il}(K_i^{l}Q-K_i^{-l}R).$$
Note that $e'_{i,l}(P)=R$ by (\ref{c}). If we set $e''_{i,l}(P)=Q$, then we have
\begin{equation*}
\begin{aligned}
& e'_{i,l}\mathtt b_{jk}=\delta_{ij}\delta_{kl}+q_i^{-kla_{ij}}\mathtt b_{jk}e'_{i,l},\\
& e''_{i,l}\mathtt b_{jk}=\delta_{ij}\delta_{kl}+q_i^{kla_{ij}}\mathtt b_{jk}e''_{i,l},
\end{aligned}
\end{equation*}
and
$$e'_{i,l}e''_{j,k}=q_i^{kla_{ij}}e''_{j,k}e'_{i,l}.$$



\vskip 3mm

\begin{definition}
Let $\mathscr B_q(\g)$ be the algebra over $\F(q)$ generated by $e'_{i,l}$, $\mathtt b_{il}$ $(i,l)\in I^{\infty}$ with  defining relations
\begin{equation*}
\begin{aligned}
& e'_{i,l}\mathtt b_{jk}=\delta_{ij}\delta_{kl}+q_i^{-kla_{ij}}\mathtt b_{jk}e'_{i,l},\\
& \sum_{r=0}^{1-la_{ij}}(-1)^r\begin{bmatrix} 1-la_{ij}\\ r
\end{bmatrix}_i
{e'_i}^{1-la_{ij}-r}e'_{j,l}{e'}_i^{r}=0 \ \ \text{for} \ i\in
I^{\text{re}} \ \text {and} \ i \neq (j,l), \\
& \sum_{r=0}^{1-la_{ij}}(-1)^r\begin{bmatrix} 1-la_{ij}\\ r
\end{bmatrix}_i
{\mathtt b_i}^{1-la_{ij}-r}\mathtt b_{j,l}\mathtt b_i^{r}=0 \ \ \text{for} \ i\in
I^{\text{re}} \ \text {and} \ i \neq (j,l), \\
& e'_{i,k}e'_{j,l}-e'_{j,l}e'_{i,k} = \mathtt b_{ik}\mathtt b_{jl}-\mathtt b_{jl}\mathtt b_{ik} =0 \ \ \text{for} \ a_{ij}=0.
\end{aligned}
\end{equation*}
\end{definition}

\vskip 3mm

We call $\mathscr B_q(\g)$ the {\it quantum boson algebra} associated with $\g$. One can show that $\mathscr B_q(\g)$ is a left $U^-_q(\g)$-module by the standard argument in \cite{Kas91}. Furthermore, we have
$$U^-_q(\g)\cong \mathscr B_q(\g)\bigg /\sum_{(i,l)\in I^{\infty}}\mathscr B_q(\g)e'_{i,l}.$$

\vskip 3mm

\begin{lemma}\label{L1}{\rm
For all $P, Q\in U^-$ and $(i,l)\in I^{\infty}$, we have
\begin{equation*}
(P\mathtt b_{il}, Q)_L=\tau_{il}(P,K_i^{l}e''_{i,l}QK_i^{-l})_L.
\end{equation*}
\begin{proof}
By (\ref{c}), we have $K_i^{l}e''_{i,l}Q=\delta_{i,l}(Q)K_i^{l}$ and  hence $K_i^{l}e''_{i,l}QK_i^{-l}=\delta_{i,l}(Q)$. Thus we obtain
$$ (P\mathtt b_{il}, Q)_L=\tau_{il}(P,\delta_{i,l}(Q))_L=\tau_{il}(P,K_i^{l}e''_{i,l}QK_i^{-l})_L$$
as desired.
\end{proof}
}\end{lemma}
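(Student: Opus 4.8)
\textbf{Proof proposal for Lemma \ref{L1}.}

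The plan is to reduce the multiplication-on-the-right identity $(P\mathtt b_{il},Q)_L=\tau_{il}(P,K_i^{l}e''_{i,l}QK_i^{-l})_L$ to the two facts already available in the excerpt: the commutation relation \eqref{c}, namely $[\mathtt a_{il},z]=\tau_{il}(\delta_{i,l}(z)K_i^{l}-K_i^{-l}\delta^{i,l}(z))$, and the definition of the operator $e''_{i,l}$ as the ``$K_i^{l}$-part'' of this commutator applied to $z\in U^-$. First I would record the bookkeeping identity $K_i^{l}e''_{i,l}(Q)=\delta_{i,l}(Q)K_i^{l}$: this is just the statement that the coefficient of $K_i^{l}$ in $[\mathtt a_{il},Q]$, divided by $\tau_{il}$, is simultaneously $\delta_{i,l}(Q)K_i^l$ (from \eqref{c}) and $K_i^le''_{i,l}(Q)$ (by the definition of $e''_{i,l}$ right after Proposition \ref{P4}). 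Conjugating by $K_i^{-l}$ then gives $K_i^{l}e''_{i,l}(Q)K_i^{-l}=\delta_{i,l}(Q)$, since $\delta_{i,l}(Q)$ lies in $U^-$ and $K_i$ acts on the weight space by a scalar that cancels, so the right-hand side of the lemma equals $\tau_{il}(P,\delta_{i,l}(Q))_L$.

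It then remains to establish $(P\mathtt b_{il},Q)_L=\tau_{il}(P,\delta_{i,l}(Q))_L$ purely inside $U^-$ with respect to $(\ ,\ )_L$. The natural route is to use property (d) of Lusztig's form together with Proposition \ref{prim}(5), which says $\varrho(\mathtt b_{il})=\mathtt b_{il}\otimes 1+1\otimes \mathtt b_{il}$. Because $\varrho$ is a twisted co-multiplication, $(P\mathtt b_{il},Q)_L=(\varrho(Q),P\otimes\mathtt b_{il})_L$ after writing $Q$'s coproduct in the basis adapted to powers of $\alpha_i$; the term of $\varrho(Q)$ that pairs nontrivially with $P\otimes\mathtt b_{il}$ is precisely $\sum_{\mathbf c}\delta_{i,\mathbf c}(Q)\otimes\mathtt b_{i,\mathbf c}$ restricted to $\mathbf c$ of total degree $l$, and by Proposition \ref{prim}(2) the primitivity of $\mathtt b_{il}$ kills all contributions except the single term $\delta_{i,l}(Q)\otimes\mathtt b_{il}$. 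Pairing and using $(\mathtt b_{il},\mathtt b_{il})_L=\tau_{il}$ yields $(P\mathtt b_{il},Q)_L=\tau_{il}(P,\delta_{i,l}(Q))_L$. Alternatively, one can take this identity as already contained in Lemma \ref{radical}(b) specialized to the primitive generator $\mathtt b_{il}$, since $\varrho_{i,l}$ restricted to the primitive decomposition collapses to $\delta_{i,l}$; either way the argument is short.

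The only genuinely delicate point is the first step — verifying that the scalar by which $K_i^{\pm l}$ acts in the conjugation $K_i^le''_{i,l}(Q)K_i^{-l}$ really does cancel, i.e. that $e''_{i,l}(Q)\in U^-_{|Q|+l\alpha_i}$ so that $K_i^l(e''_{i,l}Q)K_i^{-l}=q_i^{l\langle h_i,\,|Q|+l\alpha_i\rangle}e''_{i,l}(Q)$ matches the weight shift coming from moving $K_i^l$ past $\delta_{i,l}(Q)$ in \eqref{c}. This is a routine weight-grading check, but it is the one place where one must be careful about which side the $K_i^{\pm l}$ sits on and how $\delta_{i,l}$ versus $e''_{i,l}$ are normalized; everything else is a direct substitution. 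I would present the proof essentially as in the excerpt: invoke \eqref{c} to get $K_i^le''_{i,l}(Q)=\delta_{i,l}(Q)K_i^l$, conjugate, and then apply the primitivity-based pairing identity.
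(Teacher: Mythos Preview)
Your proposal is correct and follows the same approach as the paper's proof: identify $K_i^{l}e''_{i,l}(Q)K_i^{-l}$ with $\delta_{i,l}(Q)$ via equation \eqref{c} and the definition of $e''_{i,l}$, then invoke the pairing identity $(P\mathtt b_{il},Q)_L=\tau_{il}(P,\delta_{i,l}(Q))_L$. You are more explicit than the paper in justifying the latter via primitivity of $\mathtt b_{il}$ (the paper treats it as immediate from the definition of $\delta_{i,l}$), and your ``delicate point'' is actually trivial once $K_i^{l}e''_{i,l}(Q)=\delta_{i,l}(Q)K_i^{l}$ is in hand: right-multiplying by $K_i^{-l}$ gives the result with no weight computation needed.
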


\vskip 3mm

Let $*:U_q(\g)\rightarrow U_q(\g)$ be the $\mathbf{F}(q)$-linear  anti-involution given by
$$e^*_{il}=e_{il},
\quad f_{il}^*=f_{il},\quad (q^h)^*=q^{-h}.$$
Note that $**=id$ and $*-=-*$ on $U^\pm$, and $\mathtt a_{il}, \mathtt b_{il}$ are stable under $*$ for any $(i,l)\in I^{\infty}$.

\vskip 3mm

\begin{lemma}\label{L2}{\rm
For any $P, Q\in U^-$, we have $$(P^*,Q^*)_L=(P,Q)_L.$$
\begin{proof}
Note that $e''_{i,l}(Q^*)=K_i^{-l}(e'_{i,l}Q)^*K_i^{l}$ and $e'_{i,l}(Q^*)=K_i^{l}(e''_{i,l}Q)^*K_i^{-l}$. We shall prove this lemma by induction on $|P|$. If $P=1$, our assertion is clear. By Lemma \ref{L1} and the inductive hypothesis, we have
$$\begin{aligned}
& ((P\mathtt b_{il})^*, Q^*)_L=(\mathtt b_{il}P^*,Q^*)_L=\tau_{il}(P^*,e'_{i,l}(Q^*))_L\\
& \phantom{((P\mathtt b_{il})^*, Q^*)_L}=\tau_{il}(P^*,K_i^{l}(e''_{i,l}Q)^*K_i^{-l})_L\\
& \phantom{((P\mathtt b_{il})^*, Q^*)_L}=\tau_{il}(P,K_i^{l}e''_{i,l}QK_i^{-l})_L\\
& \phantom{((P\mathtt b_{il})^*, Q^*)_L}=(P\mathtt b_{il}, Q)_L,
\end{aligned}$$
which proves our claim.
\end{proof}
}\end{lemma}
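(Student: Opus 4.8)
The plan is to prove $(P^*, Q^*)_L = (P,Q)_L$ for all homogeneous $P, Q \in U^-$ by induction on $|P| \in Q_-$ (equivalently on the height $|{-}|P||$). The base case $P = 1$ (so $|P| = 0$) is immediate: if $Q$ is homogeneous of nonzero degree then both sides vanish by property (a) of $(\ ,\ )_L$, and if $|Q| = 0$ as well then $Q$ is a scalar and $Q^* = Q$, so equality is clear. For the inductive step I would use the fact (Proposition \ref{prim}(1)) that $U^-$ is generated by the primitive generators $\mathtt b_{il}$ together with the $f_i$ for $i \in I^{\mathrm{re}}$ — and in fact, writing $\mathtt b_{i1} = f_{i1}$ for $i \in I^{\mathrm{re}}$, every homogeneous $P$ of positive height is an $\F(q)$-linear combination of elements of the form $P' \mathtt b_{il}$ with $|P'| < |P|$. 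By linearity it suffices to treat $P = P' \mathtt b_{il}$.

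The key computation then runs exactly as in the displayed chain: using $(P' \mathtt b_{il})^* = \mathtt b_{il}^* (P')^* = \mathtt b_{il} (P')^*$ (since $\mathtt b_{il}$ is $*$-stable and $*$ is an anti-involution), one applies Lemma \ref{L1} to $(\mathtt b_{il} (P')^*, Q^*)_L$ — but note that Lemma \ref{L1} as stated governs $(P \mathtt b_{il}, Q)_L$, i.e. right multiplication by $\mathtt b_{il}$, whereas here $\mathtt b_{il}$ appears on the left. The resolution is to observe that $*$ converts the right-multiplication identity of Lemma \ref{L1} into a left-multiplication one: applying $*$ and Lemma \ref{L1} together gives $(\mathtt b_{il} R, S)_L = \tau_{il}(R, e'_{i,l}(S))_L$ for all $R, S \in U^-$, once we know the auxiliary adjunction formulas $e''_{i,l}(Q^*) = K_i^{-l}(e'_{i,l}Q)^* K_i^{l}$ and $e'_{i,l}(Q^*) = K_i^{l}(e''_{i,l}Q)^* K_i^{-l}$ stated at the start of the proof. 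With that in hand the chain closes: $(\mathtt b_{il}(P')^*, Q^*)_L = \tau_{il}((P')^*, e'_{i,l}(Q^*))_L = \tau_{il}((P')^*, K_i^{l}(e''_{i,l}Q)^* K_i^{-l})_L$, then the inductive hypothesis (applicable since $|P'| < |P|$ and conjugation by $K_i^{\pm l}$ only rescales by a power of $q$, which is $*$-invariant up to the identification, hence can be absorbed) turns this into $\tau_{il}(P', K_i^{l} e''_{i,l} Q K_i^{-l})_L$, and one final application of Lemma \ref{L1} identifies this with $(P' \mathtt b_{il}, Q)_L = (P, Q)_L$.

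The step I expect to be the real obstacle is justifying the two adjunction identities for $e'_{i,l}$ and $e''_{i,l}$ under $*$, and making precise how the $K_i^{\pm l}$-conjugations interact with the inductive hypothesis (the inner product is only defined on $U^-$, so one must check that the intermediate expressions $K_i^l(e''_{i,l}Q)^* K_i^{-l}$ genuinely lie back in $U^-$ and that the weight bookkeeping produces the stated powers of $q$ — this is where the precise definitions of $e'_{i,l}, e''_{i,l}$ via the bracket relation $[\mathtt a_{il}, P] = \tau_{il}(K_i^l Q - K_i^{-l}R)$ and the commutation rules $e'_{i,l}\mathtt b_{jk} = \delta_{ij}\delta_{kl} + q_i^{-kla_{ij}}\mathtt b_{jk}e'_{i,l}$, $e''_{i,l}\mathtt b_{jk} = \delta_{ij}\delta_{kl} + q_i^{kla_{ij}}\mathtt b_{jk}e''_{i,l}$ are needed). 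Once these structural identities are nailed down — they follow by a short induction on $|Q|$ from the defining commutation relations and the fact that $*$ fixes $\mathtt a_{il}, \mathtt b_{il}$ and sends $q^h \mapsto q^{-h}$ — the rest is the routine telescoping displayed above.
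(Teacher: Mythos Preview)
Your proof is correct and follows the paper's argument essentially line for line: induction on $|P|$, write $P = P'\mathtt b_{il}$, use $\mathtt b_{il}^* = \mathtt b_{il}$, then the chain
\[
(\mathtt b_{il}(P')^*, Q^*)_L = \tau_{il}((P')^*, e'_{i,l}(Q^*))_L = \tau_{il}((P')^*, (K_i^{l} e''_{i,l}Q K_i^{-l})^*)_L = \tau_{il}(P', K_i^{l} e''_{i,l}Q K_i^{-l})_L = (P'\mathtt b_{il}, Q)_L
\]
via the adjunction identities and Lemma~\ref{L1}, exactly as in the paper.

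One point of logic to clean up: your justification of the second equality is slightly circular. You say the left-multiplication adjoint $(\mathtt b_{il}R, S)_L = \tau_{il}(R, e'_{i,l}S)_L$ is obtained by ``applying $*$ and Lemma~\ref{L1} together'', but transporting Lemma~\ref{L1} along $*$ would already use $(X^*,Y^*)_L = (X,Y)_L$, which is the statement being proved. The correct (and easier) justification is direct: by property~(d) of the form and Proposition~\ref{prim}(5), $\varrho(\mathtt b_{il}) = \mathtt b_{il}\otimes 1 + 1\otimes \mathtt b_{il}$, so $(\mathtt b_{il}R, S)_L = (\mathtt b_{il}\otimes R, \varrho(S))_L$, and the orthogonality of $\mathtt b_{il}$ to all $\mathtt b_{i,\mathbf c}$ with $\mathbf c\neq (l)$ (Proposition~\ref{prim}(2)) picks out exactly $\tau_{il}(R, \delta^{i,l}S)_L = \tau_{il}(R, e'_{i,l}S)_L$. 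The paper uses this fact without comment. Also note that $K_i^{l}(e''_{i,l}Q)^*K_i^{-l} = (K_i^{l} e''_{i,l}Q K_i^{-l})^*$ holds because $(q^h)^* = q^{-h}$ and $*$ is an anti-involution, so conjugation by $K_i^{\pm l}$ just contributes a $*$-invariant scalar and the inductive hypothesis applies cleanly to elements of $U^-$.
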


\vskip 3mm

The following corollary is an immediate consequence of
Lemma \ref{L2} and Proposition \ref{P2}.

\vskip 3mm

\begin{corollary}\label{C1}
$\mathcal L(\infty)^*=\mathcal L(\infty)$.
\end{corollary}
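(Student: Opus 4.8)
The plan is to show the two characterizations of $\mathcal L(\infty)$ given in Proposition \ref{P2}(ii) are each preserved by the anti-involution $*$, using that $*$ is an isometry for $( \ , \ )_L$ (Lemma \ref{L2}).

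First I would recall from Proposition \ref{P2}(ii) that
$$\mathcal L(\infty)=\{u\in U^- \mid (u,u)_L\in\mathbb A_0\}.$$
Now take any $u\in\mathcal L(\infty)$. By Lemma \ref{L2} we have $(u^*,u^*)_L=(u,u)_L\in\mathbb A_0$, so $u^*\in\mathcal L(\infty)$ by the same characterization. This shows $\mathcal L(\infty)^*\subseteq\mathcal L(\infty)$. Conversely, since $**=\mathrm{id}$ on $U^-$, applying the same inclusion to $u^*$ in place of $u$ gives $u=(u^*)^*\in\mathcal L(\infty)^*$ for every $u\in\mathcal L(\infty)$, hence $\mathcal L(\infty)\subseteq\mathcal L(\infty)^*$. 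Therefore $\mathcal L(\infty)^*=\mathcal L(\infty)$.

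There is really no substantial obstacle here: the content is entirely carried by Lemma \ref{L2} (that $*$ preserves the polarization) together with the intrinsic characterization of $\mathcal L(\infty)$ in Proposition \ref{P2}(ii). The only point requiring a moment's care is that one must use the self-dual ``$(u,u)_L\in\mathbb A_0$'' description rather than the ``$(u,\mathcal L(\infty))_L\subseteq\mathbb A_0$'' description — though in fact the latter works equally well once one already knows $\mathcal L(\infty)^*=\mathcal L(\infty)$, so the quadratic form version gives the cleanest bootstrap. One should also note that $*$ is $\mathbf F(q)$-linear and in particular $\mathbb A_0$-linear, so it genuinely maps the $\mathbb A_0$-lattice $\mathcal L(\infty)$ to an $\mathbb A_0$-lattice, making the set-theoretic equality above also an equality of $\mathbb A_0$-modules.
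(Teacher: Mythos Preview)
Your proof is correct and follows exactly the approach indicated in the paper, which simply records the corollary as an immediate consequence of Lemma \ref{L2} and Proposition \ref{P2}. You have spelled out precisely the intended argument: use the characterization $\mathcal L(\infty)=\{u\in U^-\mid (u,u)_L\in\mathbb A_0\}$ from Proposition \ref{P2}(ii) together with the $*$-invariance of $(\ ,\ )_L$ from Lemma \ref{L2}, and then use $**=\mathrm{id}$ for the reverse inclusion.
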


\vskip 3mm

\begin{proposition}{\rm
Let $P, Q\in U^-_q(\g)_{-\beta}$ for $\beta\in Q_+$. If $\lambda\gg 0$,  we have
$$\{Pv_\lambda,Qv_\lambda\}\equiv c(P,Q)_L \ (\text{mod} \ q\mathbb A_0)$$
for some $c\in \mathbb A_0 \backslash q\mathbb A_0$.

\begin{proof}
We use the induction on $|\beta|$. If  $i\in I^{\text{im}}$, we have
$$\begin{aligned}
& \{\mathtt b_{il}Pv_\lambda,Qv_\lambda\}=-\{Pv_{\lambda},K_i^{l}\mathtt a_{il}Qv_\lambda\}\\
& \phantom{\{\mathtt b_{il}Pv_\lambda,Qv_\lambda\}}=-\{Pv_{\lambda}, K_i^{l}(Q\mathtt a_{il}+\tau_{il}(K_i^{l}e''_{i,l}Q-K_i^{-l}e'_{i,l}Q))v_\lambda\}\\
& \phantom{\{\mathtt b_{il}Pv_\lambda,Qv_\lambda\}}=-\tau_{il}\{Pv_{\lambda}, K_i^{2l}e''_{i,l}Qv_\lambda-e'_{i,l}Qv_\lambda\}\\
& \phantom{\{\mathtt b_{il}Pv_\lambda,Qv_\lambda\}}=-\tau_{il}\{Pv_{\lambda}, q_i^{2l(\lambda-\beta)(h_i)}e''_{i,l}Qv_\lambda\}+\tau_{il}\{Pv_\lambda,e'_{i,l}Qv_{\lambda}\},
\end{aligned}$$
where $P\in U^-_{-\beta}$ and $Q\in U^-_{-\beta-l\alpha_i}$. Hence
$$\{\mathtt b_{il}Pv_\lambda,Qv_\lambda\}\equiv \tau_{il}\{Pv_\lambda,e'_{il}Qv_{\lambda}\} \equiv c\tau_{il}(P,e'_{i,l}Q)_L=c(\mathtt b_{il}P, Q)_L \ (\text{mod} \ q\mathbb A_0).$$
 if $i\in I^{\text{re}}$, we have
$$\begin{aligned}
& \{\mathtt b_{i}Pv_\lambda,Qv_\lambda\}=\frac{1}{q_i^2-1}\{Pv_\lambda,K_i\mathtt a_iQv_\lambda\} \\
& \phantom{\{\mathtt b_{i}Pv_\lambda,Qv_\lambda\}}=\frac{1}{q_i^2-1}\{Pv_\lambda,K_i\tau_i(K_ie''_iQ-K_i^{-1}e'_iQ)v_\lambda\}\\
& \phantom{\{\mathtt b_{i}Pv_\lambda,Qv_\lambda\}}=\frac{1}{q_i^2-1}\tau_i\{Pv_\lambda, q_i^{2(\lambda-\beta)(h_i)}e''_{il}Qv_\lambda\}+\frac{1}{q_i^2-1}\tau_i\{Pv_\lambda,e'_iQv_\lambda\},
\end{aligned}$$
where $P\in U^-_{-\beta}$ and $Q\in U^-_{-\beta-\alpha_i}$. Hence
$$\begin{aligned}
& \{\mathtt b_{i}Pv_\lambda,Qv_\lambda\}\equiv \frac{1}{q_i^2-1}\tau_i\{Pv_\lambda,e'_iQv_\lambda\} \equiv \frac{1}{q_i^2-1}c\tau_i(P,e'_{i}Q)_L\\
& \phantom{\{\mathtt b_{i}Pv_\lambda,Qv_\lambda\}}=\frac{1}{q_i^2-1}c(\mathtt b_{i}P, Q)_L \ (\text{mod} \ q\mathbb A_0),
\end{aligned}$$
which completes the proof.
\end{proof}
}\end{proposition}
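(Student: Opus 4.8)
The plan is to prove the statement by induction on $|\beta|$, peeling off one primitive generator $\mathtt b_{il}$ (or $\mathtt b_i$) at a time from $P$, since the primitive generators generate $U^-$. The base case $\beta = 0$ is immediate: then $P, Q$ are scalars, $\{Pv_\lambda, Qv_\lambda\} = PQ\{v_\lambda, v_\lambda\} = PQ$ and $(P,Q)_L = PQ$, so we may take $c = 1$. For the inductive step, write $P = \mathtt b_{il}P'$ with $P' \in U^-_{-\beta'}$ and $|\beta'| < |\beta|$; the case $i \in I^{\mathrm{re}}$ is handled identically using $\mathtt b_i = f_i$ and the real-index adjointness relation, so I focus on $i \in I^{\mathrm{im}}$.

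The key computation, displayed in the statement, uses the defining property $\{\mathtt b_{il}v, v'\} = -\{v, K_i^l \mathtt a_{il} v'\}$ of the polarization on $V(\lambda)$ together with the commutation relation $[\mathtt a_{il}, Q] = \tau_{il}(K_i^l e''_{i,l}Q - K_i^{-l}e'_{i,l}Q)$ (which follows from \eqref{news} and \eqref{c}, noting $e'_{i,l} = \delta^{i,l}$ and $e''_{i,l}$ as defined after Proposition \ref{P4}). Expanding $K_i^l \mathtt a_{il} Q v_\lambda$ and using $\mathtt a_{il}v_\lambda = 0$ kills the $Q\mathtt a_{il}$ term; then passing $K_i^l$ (and $K_i^{2l}$) across to act on the weight vector $v_\lambda$ of weight $\mu = \lambda - \beta'$ produces scalars $q_i^{2l(\lambda-\beta')(h_i)}$ on the $e''_{i,l}Q$-term and $1$ on the $e'_{i,l}Q$-term. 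This is where $\lambda \gg 0$ enters decisively: the exponent $2l(\lambda-\beta')(h_i) > 0$ forces $q_i^{2l(\lambda-\beta')(h_i)} \in q\mathbb A_0$, so the $e''_{i,l}$-term is congruent to $0$ modulo $q\mathbb A_0$, and we are left with $\{\mathtt b_{il}P'v_\lambda, Qv_\lambda\} \equiv \tau_{il}\{P'v_\lambda, e'_{i,l}Qv_\lambda\} \pmod{q\mathbb A_0}$. Since $e'_{i,l}Q \in U^-_{-\beta'}$, the inductive hypothesis applies to the pair $(P', e'_{i,l}Q)$, giving $\{P'v_\lambda, e'_{i,l}Qv_\lambda\} \equiv c(P', e'_{i,l}Q)_L \pmod{q\mathbb A_0}$ for some $c \in \mathbb A_0 \setminus q\mathbb A_0$; finally Lemma \ref{L1} (in the form $(\mathtt b_{il}P', Q)_L = \tau_{il}(P', e'_{i,l}Q)_L$, using $K_i^l e''_{i,l}QK_i^{-l} = \delta_{i,l}(Q)$ and the adjointness) identifies $\tau_{il}(P', e'_{i,l}Q)_L = (\mathtt b_{il}P', Q)_L = (P, Q)_L$, completing the step with the same constant $c$.

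One subtlety is the choice of $c$ and the meaning of $\lambda \gg 0$: the constant $c$ should be uniform over all $P, Q$ of a fixed degree $\beta$, and the threshold on $\lambda$ may depend on $\beta$; since the induction only descends finitely, a single large enough $\lambda$ (depending on $\beta$) suffices, and one checks $c$ propagates unchanged through the recursion. A second point to verify carefully is that the weight of the relevant intermediate vector is exactly $\lambda - \beta'$ (not $\lambda - \beta$), so that the positivity of the exponent is correctly tied to the \emph{smaller} degree; this is automatic since $e'_{i,l}Q \in U^-_{-\beta'}$ and $P' \in U^-_{-\beta'}$.

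The main obstacle I anticipate is not any single hard estimate but the bookkeeping needed to guarantee that the congruences are all modulo $q\mathbb A_0$ rather than merely modulo $q\F[[q]]$ — i.e. that no denominators vanishing at $q = 0$ sneak in. This requires knowing $\tau_{il} \in \mathbb A_0 \setminus q\mathbb A_0$ (which holds by Remark \ref{lem:prop3.3}: $\tau_{il} \equiv 1$ or $\equiv 1/l \pmod q$), that $e'_{i,l}$ preserves $U^-$ with no bad denominators, and that the factor $1/(q_i^2 - 1)$ appearing in the real case is itself in $\mathbb A_0 \setminus q\mathbb A_0$ (indeed $q_i^2 - 1 = -1 + q_i^2$ is a unit in $\mathbb A_0$). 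Once these integrality facts are in place, the argument is a clean descending induction.
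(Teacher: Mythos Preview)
Your proposal is correct and follows essentially the same inductive strategy as the paper's own proof: peel off a primitive generator, use the adjointness of $\{\ ,\ \}$ together with the commutation $[\mathtt a_{il},Q]=\tau_{il}(K_i^l e''_{i,l}Q-K_i^{-l}e'_{i,l}Q)$, kill the $e''$-contribution via the large positive power of $q$ coming from $\lambda\gg 0$, and close up with the $(\ ,\ )_L$-adjointness $(\mathtt b_{il}P',Q)_L=\tau_{il}(P',e'_{i,l}Q)_L$. One small correction: Lemma~\ref{L1} is the \emph{right}-multiplication statement $(P\mathtt b_{il},Q)_L=\tau_{il}(P,\delta_{i,l}Q)_L$ (involving $e''_{i,l}$), whereas what you need here is the \emph{left}-multiplication identity $(\mathtt b_{il}P',Q)_L=\tau_{il}(P',e'_{i,l}Q)_L$, which follows directly from $e'_{i,l}=\delta^{i,l}$ and Proposition~\ref{prim}(5) rather than from Lemma~\ref{L1}.
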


\vskip 3mm

\begin{corollary}{\rm
If $\lambda\gg 0$ and $Pv_\lambda\in \mathcal L(\lambda)$, then $P^*v_\lambda\in \mathcal L(\lambda)$.
\begin{proof}
If $Pv_\lambda\in\mathcal L(\lambda)$, then $\{Pv_\lambda,Pv_\lambda\}\in \mathbb A_0$ by Proposition \ref{P3}. Since $\{Pv_\lambda,Pv_\lambda\}\equiv c(P,P)_L \ (\text{mod} \ q\mathbb A_0)$  for some $c \in \mathbb{A}_{0} \setminus q \, \mathbb{A}_{0}$, we have $(P,P)_L\in \mathbb A_0$. Hence $P\in \mathcal L(\infty)$ and $P^*\in \mathcal L(\infty)$ by Proposition \ref{P2} and Corollary \ref{C1}. Now Proposition \ref{P4} yields $\pi_{\lambda}(\mathcal L(\infty))=\mathcal L(\lambda)$. 
Thus we get $P^*v_{\lambda}\in \mathcal L(\lambda)$ by applying $\pi_\lambda$.
\end{proof}
}\end{corollary}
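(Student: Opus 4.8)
The plan is to chain together the structural facts that have already been assembled. The hypothesis is $\lambda \gg 0$ and $Pv_\lambda \in \mathcal L(\lambda)$; the goal is $P^*v_\lambda \in \mathcal L(\lambda)$. The natural route is: first pull the membership $Pv_\lambda \in \mathcal L(\lambda)$ back to a membership $P \in \mathcal L(\infty)$ inside $U^-_q(\g)$, then use the fact that $\mathcal L(\infty)$ is $*$-stable to conclude $P^* \in \mathcal L(\infty)$, and finally push forward again via $\pi_\lambda$ to land in $\mathcal L(\lambda)$.

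More precisely, I would argue as follows. Since $Pv_\lambda \in \mathcal L(\lambda)$, Proposition \ref{P3}(ii) gives $\{Pv_\lambda, Pv_\lambda\} \in \mathbb A_0$. By the preceding proposition (applied with $Q = P$), for $\lambda \gg 0$ we have $\{Pv_\lambda, Pv_\lambda\} \equiv c\,(P,P)_L \pmod{q\mathbb A_0}$ for some $c \in \mathbb A_0 \setminus q\mathbb A_0$; since $c$ is a unit in $\mathbb A_0$, this forces $(P,P)_L \in \mathbb A_0$. Now Proposition \ref{P2}(ii) characterizes $\mathcal L(\infty)$ as exactly $\{u \in U^- \mid (u,u)_L \in \mathbb A_0\}$, so $P \in \mathcal L(\infty)$. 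Applying Corollary \ref{C1}, which asserts $\mathcal L(\infty)^* = \mathcal L(\infty)$, we obtain $P^* \in \mathcal L(\infty)$. Finally, Proposition \ref{P4}(i) gives $\pi_\lambda(\mathcal L(\infty)) = \mathcal L(\lambda)$, and since $\pi_\lambda(P^*) = P^* v_\lambda$ by definition of $\pi_\lambda$, we conclude $P^* v_\lambda \in \mathcal L(\lambda)$, as desired.

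There is essentially no computational obstacle here; the corollary is a formal consequence of results already in hand, and the proof is a short bookkeeping argument. The one point requiring a little care is that the comparison $\{Pv_\lambda, Pv_\lambda\} \equiv c\,(P,P)_L$ only holds for $\lambda \gg 0$, so the hypothesis $\lambda \gg 0$ is genuinely used and must be invoked explicitly; the constant $c$ must be known to be a unit (i.e.\ $c \notin q\mathbb A_0$) in order to deduce $(P,P)_L \in \mathbb A_0$ from $\{Pv_\lambda, Pv_\lambda\} \in \mathbb A_0$, and this is exactly what the cited proposition provides. Everything else is a direct citation of Proposition \ref{P2}, Corollary \ref{C1}, and Proposition \ref{P4}.
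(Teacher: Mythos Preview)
Your proposal is correct and follows essentially the same approach as the paper's proof: use Proposition~\ref{P3} to get $\{Pv_\lambda,Pv_\lambda\}\in\mathbb A_0$, the preceding proposition to deduce $(P,P)_L\in\mathbb A_0$, Proposition~\ref{P2} and Corollary~\ref{C1} to obtain $P^*\in\mathcal L(\infty)$, and then push forward via $\pi_\lambda$ using Proposition~\ref{P4}. Your write-up is in fact slightly more explicit than the paper's in explaining why $c\in\mathbb A_0\setminus q\mathbb A_0$ is needed.
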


\vskip 8mm

\section{$\mathbb A$-form of $U^-_q(\g)$}

\vskip 2mm

Let $\mathbb A=\F[q,q^{-1}]$ and $\mathbb A_\infty=\{f\in \F(q) \mid f \ \text{is regular at} \ q=\infty\}$. We denote by $U^-_{\mathbb A}(\g)$ the $\mathbb A$-subalgebra of $U_q(\g)$ generated by $\mathtt b_i^{(n)}$ $(i\in I^{\text{re}}, n\geq 0)$ and $\mathtt b_{il}$ $(i\in I^{\text{im}}, l\geq1)$.

\vskip 3mm

For each $i\in I^{\text{re}}$, set
\begin{equation}\label{eq:new}
A_i=\mathtt a_i/\tau_i(q_i-q_i^{-1}),
\end{equation}
which yields the following commutation relation
\begin{equation} \label{eq:comm2}
A_i\mathtt b_i-\mathtt b_iA_i=\frac{K_i-K_i^{-1}}{q_i-q_i^{-1}}.
\end{equation}
For $i\in I^{\text{im}}$ and $l\geq1$, set $A_{il}=\mathtt a_{il}/\tau_{il}$.  Then we have
\begin{equation}\label{eq:comm3}
A_{il}\mathtt b_{il}-\mathtt b_{il}A_{il}=K_i^{l}-K_i^{-l}.
\end{equation}

\vskip 3mm

 Let $U_{\mathbb A}(\g)$ be the $\mathbb A$-subalgebra of $U_q(\g)$ generated by $A_i^{(n)},\mathtt b_i^{(n)}$ $(i\in I^{\text{re}}, n\geq 0)$, $A_{il}, \mathtt b_{il}$ $(i\in I^{\text{im}}, l\geq1)$ and $q^h$ $(h \in P^{\vee})$,  $\left\{ \begin{matrix}K_iq_i^n \\ m \end{matrix}\right\}_i$ $(i \in I^{\text{re}}, m\in \Z_{\geq 0}, n\in \Z)$,
where
\begin{equation} \label{df3}
 \left\{ \begin{matrix}K_iq_i^n \\ m \end{matrix}\right\}_i=\frac{1}{[m]_i!}\prod_{s=1}^m\frac{K_iq_i^{n+1-s}-K_i^{-1}q_i^{-n-1+s}}{q_i-q_i^{-1}}.
\end{equation}

Let $U_{\mathbb A}^+(\g)$ (resp. $U_{\mathbb A}^0(\g)$) be the $\mathbb A$-subalgebra of $U_q(\g)$ generated by $A_i^{(n)}$ $(i\in I^{\text{re}})$ and $A_{il}$ $(i\in I^{\text{im}},  l \ge 1)$
(resp. $q^h, \left\{ \begin{matrix}K_iq_i^n \\ m \end{matrix}\right\}_i$ for $h\in P^{\vee}, m\in \Z_{\geq 0}, n\in \Z \ \text{and} \ i\in I^{\text{re}}$ ). Then using the commutations relations \eqref{eq:comm2}, \eqref{eq:comm3} and the definition \eqref{df3}, one can prove that $U_{\mathbb A}(\g)$ has
the triangular decomposition (see also \cite[Section 1]{Kas91}, \cite[Exercise 3.6]{HK2002})
$$U_{\mathbb A}(\g)\cong U^-_{\mathbb A}(\g)\otimes U^0_{\mathbb A}(\g)\otimes U^+_{\mathbb A}(\g).$$

\vskip 3mm

Let $\lambda\in P^{+}$ and consider an $\F$-linear automorphism $^-:V(\lambda)\rightarrow V(\lambda)$ given by $Pv_\lambda\mapsto\overline Pv_\lambda$ for $P\in U_q(\g)$. Set $\mathcal L(\lambda)^-=\overline{\mathcal L(\lambda)}$. Then $\mathcal L(\lambda)$ (resp. $\mathcal L(\lambda)^-$) is a free $\mathbb A_0$-lattice (resp. free $\mathbb A_\infty$-lattice) of $V(\lambda)$.

\vskip 3mm

Since $$\left\{ \begin{matrix}K_iq_i^n \\ m \end{matrix}\right\}_iv_\lambda=\begin{bmatrix}\lambda(h_i)+n\\ m\end{bmatrix}_iv_\lambda\in \Z[q,q^{-1}]v_\lambda,$$
we get $U^0_\mathbb A(\g)v_\lambda=\mathbb Av_\lambda$. This
leads us to give  the following definition
$$V(\lambda)^{\mathbb A}:=U_{\mathbb A}(\g)v_\lambda=U^-_{\mathbb A}(\g)v_\lambda.$$
Note that $\overline {\mathtt b}_{il}=\mathtt b_{il}$ for all $(i,l)\in I^{\infty}$. Hence $U^-_{\mathbb A}(\g)$ and $V(\lambda)^{\mathbb A}$ are stable  under $-$. Also, since $U^-_{\mathbb A}(\g)$ is graded by $Q_-$, we have  $V(\lambda)^{\mathbb A}=\bigoplus_{\mu\leq \lambda}V(\lambda)_{\mu}^{\mathbb A}$, where $V(\lambda)_{\mu}^{\mathbb A}=V(\lambda)^{\mathbb A}\cap V(\lambda)_\mu$.

\vskip 3mm

\vskip 3mm

Fix $i \in I$. In \cite{Bozec2014c}, Bozec proved that
every $u \in U_{q}^{-}(\g)$ has the following decomposition.

\begin{equation} \label{eq:decomp}
u = \begin{cases}
& \sum_{n\geq 0}\mathtt b_i^{(n)}u_n \quad \text{with}\ i\in I^{\text{re}} \ \text{and}\ e'_iu_n=0 \ \text{for all} \ n\geq 0,\\
& \sum_{\mathbf c \in \mathscr C_i}\mathtt b_{i,\mathbf c}u_{\mathbf c} \quad \text{with}\ i\in I^{\text{im}} \ \text{and}\ e'_{il}u_{\mathbf c}=0 \ \text{for all} \ l>0, \mathbf c\in \mathscr C_i.
\end{cases}
\end{equation}

\vskip 3mm

\begin{lemma}

{\rm
For each $i \in I $ and $u \in U_{q}^{-}(\g)$, consider the decomposition \eqref{eq:decomp}.
If $u\in U^-_{\mathbb A}(\g)$, then all  $u_n, u_{\mathbf c} \in U^-_{\mathbb A}(\g)$.

\begin{proof}
We first prove that $e'_{i,l}U^-_{\mathbb A}(\g)\subseteq U^-_{\mathbb A}(\g)$ for all $(i,l)\in I^{\infty}$.
\vskip 2mm
Since
$e'_{i,l}\mathtt b_{jk}=\delta_{ij}\delta_{kl}+q_i^{-kla_{ij}}\mathtt b_{jk}e'_{i,l}, $
we have
$$e'_i\mathtt b_i=1+q_i^{-2}\mathtt b_ie'_i \ \ \text{for} \ i\in I^{\text{re}}.$$
It follows that
$$e'_i\mathtt b_i^{(n)}=q_i^{1-n}\mathtt b_i^{(n-1)}+q_i^{-2n}\mathtt b_i^{(n)}e'_i.$$

\vskip 2mm
Furthermore, by a direct calculation,  we have
$$e'^{n}_i\mathtt b_i^{(m)}=\sum_{k=0}^{n}q_i^{-2nm+(m+n)k-k(k-1)/2}\begin{bmatrix}n\\k\end{bmatrix}_i\mathtt b_i^{(m-k)}e'^{n-k}_i,$$
where $\mathtt b_i^{(r)}=0$ if $r<0$. These imply our assertion.

\vskip 3mm

For $i\in I^{\text{re}}$, let $$P=\sum_{n\geq 0}(-1)^nq_i^{-n(n-1)/2}\mathtt b_i^{(n)}e'^{n}_i.$$
Then we obtain  (cf. \cite[Section 3.2]{Kas91}):
\begin{itemize}
\item [(i)] $P\mathtt b_i=0$, \, $e'_iP=0$,
\item [(ii)] $\sum_{n\geq 0}q_i^{n(n-1)/2} \, \mathtt b_i^{(n)}Pe'^{n}_i=1$,
\item [(iii)] $Pe'^{n}_iu=q_i^{-n(n-1)/2}u_n$ for $u\in U^-_q(\g)$.
\end{itemize}

\vskip 2mm

Hence, if $u\in U^-_{\mathbb A}(\g)$, then $u_n\in U^-_{\mathbb A}(\g)$ for all $n\geq 0$.

\vskip 3mm

For $i\in I^{\text{im}}$, we use a similar argument in \cite[Proposition 3.11]{Bozec2014b}. Assume that $u\in U^-_{\mathbb A}(\g)$  has the form $u=m\mathtt b_{i,\mathbf c}m'$ for some $\mathbf c\in \mathscr C_i$ and homogeneous  elements  $m, m'\in \mathcal K_i\cap U^-_{\mathbb A}(\g)$, where $\mathcal K_i=\bigcap_{l>0}\text{ker}e'_{i,l}$. We shall show that $u$ can be written into the form
$$u=\sum_{\mathbf c'\in \mathscr C_i}\mathtt b_{i,\mathbf c'}u_{\mathbf c'} \ \ \text{with}
 \ u_{\mathbf c'}\in \mathcal K_i\cap U^-_{\mathbb A}(\g).$$

If $|\mathbf c|=0$, then $u=mm'\in \mathcal K_i\cap U^-_{\mathbb A}(\g)$. Otherwise, we have
$$u=(m\mathtt b_{ic_1}-q^{c_1(|m|,\alpha_i)}\mathtt b_{ic_1}m)\mathtt b_{i,\mathbf c\backslash c_1}m'+q^{c_1(|m|,\alpha_i)}\mathtt b_{ic_1}m\mathtt b_{i,\mathbf c\backslash c_1}m',$$
where $m\mathtt b_{ic_1}-q^{c_1(|m|,\alpha_i)}\mathtt b_{ic_1}m\in \mathcal K_i\cap U^-_{\mathbb A}(\g)$. Now our claim follows by using the induction on $|\mathbf c|$.

\vskip 3mm

We next show that if $u\in U^-_{\mathbb A}(\g)$, then $u$ can be written into the form
$$u=\sum_{\mathbf c \in \mathscr C_i}\mathtt b_{i,\mathbf c}u_{\mathbf c} \  \text{ with}
\ u_{\mathbf c}\in \mathcal K_i\cap U^-_{\mathbb A}(\g).$$ We will  use the induction on $-|u|$.

\vskip 3mm

Assume that $u$ is a monomial in $U^-_{\mathbb A}(\g)$. Then  there exists some monomial $u'\in U^-_{\mathbb A}(\g)$ such that $u=\mathtt b_j^{(n)}u'$ for some $j\in I^{\text{re}}$ or $u=\mathtt b_{jl}u'$ for some $j\in I^{\text{im}}$. By induction hypothesis, $u'=\sum_{\mathbf c \in \mathscr C_i}\mathtt b_{i,\mathbf c}u_{\mathbf c}$ with $u_{\mathbf c}\in \mathcal K_i\cap U^-_{\mathbb A}(\g)$. If $j\neq i$,
then $u=\sum_{\mathbf c \in \mathscr C_i}\mathtt b_j^{(n)}\mathtt b_{i,\mathbf c} \, u_{\mathbf c}$ or $u=\sum_{\mathbf c \in \mathscr C_i}\mathtt b_{jl}\mathtt b_{i,\mathbf c}u_{\mathbf c}$ is of the form $m\mathtt b_{i,\mathbf c}m'$ with $m, m'\in \mathcal K_i\cap U^-_{\mathbb A}(\g)$. If $i=j$, then $u=\sum_{\mathbf c \in \mathscr C_i}\mathtt b_{i,(l,\mathbf c)}u_{\mathbf c}$ is already in the form  we wanted.

\vskip 3mm

Thus, our assertion  follows from the uniquesness of the decomposition.
\end{proof}
}\end{lemma}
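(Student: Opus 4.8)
The plan is to reduce the statement to the assertion that each operator $e'_{i,l}$ maps $U^-_{\mathbb A}(\g)$ into itself, and then to read off the components of the decomposition \eqref{eq:decomp} by explicit $\mathbb A$-linear formulas. To get the preservation statement, I would start from the commutation rule $e'_{i,l}\mathtt b_{jk}=\delta_{ij}\delta_{kl}+q_i^{-kla_{ij}}\mathtt b_{jk}e'_{i,l}$; in the real case this specializes to $e'_i\mathtt b_i=1+q_i^{-2}\mathtt b_ie'_i$, whence $e'_i\mathtt b_i^{(n)}=q_i^{1-n}\mathtt b_i^{(n-1)}+q_i^{-2n}\mathtt b_i^{(n)}e'_i$ and, by an easy induction, $e'^{n}_i\mathtt b_i^{(m)}=\sum_{k=0}^{n}q_i^{-2nm+(m+n)k-k(k-1)/2}\begin{bmatrix}n\\k\end{bmatrix}_i\mathtt b_i^{(m-k)}e'^{n-k}_i$. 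All scalars appearing here and in the imaginary form of the relation lie in $\mathbb A$, so commuting an $e'_{i,l}$ to the right through a word in the generators $\mathtt b_j^{(n)},\mathtt b_{jl}$ yields only $\mathbb A$-linear combinations of such words (with a trailing $e'$ that kills $\mathbf 1$), which gives $e'_{i,l}\bigl(U^-_{\mathbb A}(\g)\bigr)\subseteq U^-_{\mathbb A}(\g)$.

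For $i\in I^{\text{re}}$ I would then introduce the Kashiwara-type element $P=\sum_{n\ge 0}(-1)^nq_i^{-n(n-1)/2}\mathtt b_i^{(n)}e'^{n}_i$, which satisfies $P\mathtt b_i=0$, $e'_iP=0$, $\sum_{n\ge0}q_i^{n(n-1)/2}\mathtt b_i^{(n)}Pe'^{n}_i=1$, and $Pe'^{n}_iu=q_i^{-n(n-1)/2}u_n$. Since $P$ and the powers of $e'_i$ preserve $U^-_{\mathbb A}(\g)$ by the previous step, this last identity forces $u_n\in U^-_{\mathbb A}(\g)$ whenever $u\in U^-_{\mathbb A}(\g)$.

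The imaginary case is where the work lies. Put $\mathcal K_i=\bigcap_{l>0}\ker e'_{i,l}$. I would first show, by induction on $|\mathbf c|$, that for homogeneous $m,m'\in\mathcal K_i\cap U^-_{\mathbb A}(\g)$ any product $m\mathtt b_{i,\mathbf c}m'$ can be rewritten as $\sum_{\mathbf c'\in\mathscr C_i}\mathtt b_{i,\mathbf c'}u_{\mathbf c'}$ with each $u_{\mathbf c'}\in\mathcal K_i\cap U^-_{\mathbb A}(\g)$, the inductive step using $m\mathtt b_{ic_1}=\bigl(m\mathtt b_{ic_1}-q^{c_1(|m|,\alpha_i)}\mathtt b_{ic_1}m\bigr)+q^{c_1(|m|,\alpha_i)}\mathtt b_{ic_1}m$, whose first summand again lies in $\mathcal K_i\cap U^-_{\mathbb A}(\g)$. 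Then, by induction on $-|u|$, any monomial $u\in U^-_{\mathbb A}(\g)$ — which is $\mathtt b_j^{(n)}u'$ or $\mathtt b_{jl}u'$ for a strictly shorter monomial $u'$ — is brought to this shape: apply the hypothesis to $u'$; if $j\ne i$ each resulting term is of the type $m\mathtt b_{i,\mathbf c}m'$ just treated (with $m=\mathtt b_j^{(n)}$ or $\mathtt b_{jl}\in\mathcal K_i$), and if $j=i$ it is already of the desired form. Finally, uniqueness of the decomposition \eqref{eq:decomp} identifies the $u_{\mathbf c}$ obtained this way with the intrinsic components, so they lie in $U^-_{\mathbb A}(\g)$. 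The main obstacle is exactly this last block: one must keep every scalar inside $\mathbb A$ all the way through the commutator bookkeeping while simultaneously arranging the output so that each coefficient of $\mathtt b_{i,\mathbf c}$ is annihilated by all the $e'_{i,l}$, which is what lets uniqueness of \eqref{eq:decomp} be applied.
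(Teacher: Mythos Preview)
Your proposal is correct and follows essentially the same approach as the paper's own proof: you establish $e'_{i,l}\bigl(U^-_{\mathbb A}(\g)\bigr)\subseteq U^-_{\mathbb A}(\g)$ via the same commutation formulas, extract the real components using the Kashiwara projector $P$, and handle the imaginary case by the same two-layer induction (first on $|\mathbf c|$ for products $m\mathtt b_{i,\mathbf c}m'$, then on $-|u|$ for monomials), concluding by uniqueness of the decomposition. The arguments, auxiliary identities, and structure match the paper's proof essentially line by line.
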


\vskip 3mm

Define
\begin{equation*}
\begin{aligned}
& (\mathtt b_i^nU_q^-(\g))^\mathbb A:=\mathtt b_i^nU_q^-(\g)\cap U^-_{\mathbb A}(\g) \quad \text{for} \ i\in I^{\text{re}} \ \text{and}\ n\geq 1,\\
& (\mathtt b_{i,\mathbf c}U_q^-(\g))^\mathbb A:=\mathtt b_{i,\mathbf c}U_q^-(\g)\cap U^-_{\mathbb A}(\g) \quad \text{for} \ i\in I^{\text{im}} \ \text{and}\ \mathbf c \in \mathscr C_i\backslash \{0\}.
\end{aligned}
\end{equation*}

By  the above lemma, $U^-_{\mathbb A}(\g)$ is stable under the Kashiwara operators $\widetilde{e}_{il}$
and $\widetilde{f}_{il}$. Moreover, we can prove the following corollary easily.

\vskip 3mm

\begin{corollary}\
{\rm
\begin{itemize}
\item [(i)] For $i\in I^{\text{re}}$, $(\mathtt b_i^nU_q^-(\g))^\mathbb A=\sum_{k\geq n}\mathtt b_i^{(k)}U^-_{\mathbb A}(\g)=\bigoplus_{k\geq n}  \mathtt b_i^{(k)} (U^-_{\mathbb A}(\g)\cap \text{ker} e'_i)$.\\
    For $i\in I^{\text{im}}$, $(\mathtt b_{i,\mathbf c}U_q^-(\g))^\mathbb A= b_{i,\mathbf c}U^-_{\mathbb A}(\g)=\bigoplus_{\mathbf c'\in\mathscr C_i}\mathtt b_{i,(\mathbf c,\mathbf c')}(U^-_{\mathbb A}(\g)\cap \mathcal K_i)$.
\item [(ii)] For $i\in I^{\text{re}}$, $(\mathtt b_i^nV(\lambda))^{\mathbb A}:=(\mathtt b_i^nU_q^-(\g))^\mathbb Av_\lambda=\sum_{k\geq n}\mathtt b_i^{(k)}V(\lambda)^{\mathbb A}$. \\
    For $i\in I^{\text{im}}$, $(\mathtt b_{i,\mathbf c}V(\lambda))^\mathbb A:=(\mathtt b_{i,\mathbf c}U_q^-(\g))^\mathbb Av_\lambda=\mathtt b_{i,\mathbf c}V(\lambda)^{\mathbb A}$.
\end{itemize}
}\end{corollary}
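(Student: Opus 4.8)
The plan is to prove both statements (i) and (ii) together, deriving (ii) from (i) by applying the highest weight projection $\pi_\lambda$ and using the triangular decomposition $U_{\mathbb A}(\g)\cong U^-_{\mathbb A}(\g)\otimes U^0_{\mathbb A}(\g)\otimes U^+_{\mathbb A}(\g)$ together with $U^0_{\mathbb A}(\g)v_\lambda=\mathbb Av_\lambda$. So the heart of the matter is (i), which is really two separate assertions, one for $i\in I^{\text{re}}$ and one for $i\in I^{\text{im}}$. In each case there are two things to check: first the identification of the intersection $\mathtt b_i^nU_q^-(\g)\cap U^-_{\mathbb A}(\g)$ (resp. $\mathtt b_{i,\mathbf c}U_q^-(\g)\cap U^-_{\mathbb A}(\g)$) with the visibly $\mathbb A$-integral object on the right, and second the internal direct sum decomposition via $\ker e'_i$ (resp. $\mathcal K_i=\bigcap_{l>0}\ker e'_{i,l}$).

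For $i\in I^{\text{re}}$: the decomposition $u=\sum_{n\geq 0}\mathtt b_i^{(n)}u_n$ with $e'_iu_n=0$ from \eqref{eq:decomp}, combined with the preceding lemma asserting each $u_n\in U^-_{\mathbb A}(\g)$ when $u\in U^-_{\mathbb A}(\g)$, already gives $U^-_{\mathbb A}(\g)=\bigoplus_{k\geq 0}\mathtt b_i^{(k)}(U^-_{\mathbb A}(\g)\cap\ker e'_i)$ once one observes the sum is direct (directness follows from the uniqueness clause in \eqref{eq:decomp}, since $\mathtt b_i^{(k)}(U^-_{\mathbb A}(\g)\cap\ker e'_i)\subseteq\mathtt b_i^{(k)}\ker e'_i$). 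Truncating at $k\geq n$ gives $\bigoplus_{k\geq n}\mathtt b_i^{(k)}(U^-_{\mathbb A}(\g)\cap\ker e'_i)$, and this is clearly contained in both $\mathtt b_i^nU_q^-(\g)$ and $U^-_{\mathbb A}(\g)$. The nontrivial inclusion is the reverse: if $u\in\mathtt b_i^nU_q^-(\g)\cap U^-_{\mathbb A}(\g)$, write $u=\sum_{k\geq 0}\mathtt b_i^{(k)}u_k$ as above with $u_k\in U^-_{\mathbb A}(\g)$; then $u\in\mathtt b_i^nU_q^-(\g)$ forces $u_k=0$ for $k<n$ by uniqueness of the decomposition over $U_q^-(\g)$ (noting $\mathtt b_i^nU_q^-(\g)=\sum_{k\geq n}\mathtt b_i^{(k)}\ker e'_i$ over $\Q(q)$, which is the already-known $\Q(q)$-version). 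The statement $\sum_{k\geq n}\mathtt b_i^{(k)}U^-_{\mathbb A}(\g)=\sum_{k\geq n}\mathtt b_i^{(k)}(U^-_{\mathbb A}(\g)\cap\ker e'_i)$ then follows by applying the same truncated decomposition inside.

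For $i\in I^{\text{im}}$: one runs the parallel argument using $u=\sum_{\mathbf c\in\mathscr C_i}\mathtt b_{i,\mathbf c}u_{\mathbf c}$ with $u_{\mathbf c}\in\mathcal K_i\cap U^-_{\mathbb A}(\g)$ (again from the lemma) and the uniqueness clause, reindexing compositions/partitions: $\mathtt b_{i,\mathbf c}U^-_{\mathbb A}(\g)=\bigoplus_{\mathbf c'\in\mathscr C_i}\mathtt b_{i,(\mathbf c,\mathbf c')}(U^-_{\mathbb A}(\g)\cap\mathcal K_i)$, where one must check that $(\mathbf c,\mathbf c')$ ranges without repetition over the compositions (resp., for $i\in I^{\text{iso}}$, that the multiset union $\mathbf c\cup\mathbf c'$ together with the multiplicity bookkeeping is consistent with $\mathscr C_{i}=\mathcal P_\bullet$) — this is the bit of combinatorial care alluded to in the introduction. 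Then for (ii), apply $\pi_\lambda$: by Proposition \ref{P4}(i) style reasoning and the definition $V(\lambda)^{\mathbb A}=U^-_{\mathbb A}(\g)v_\lambda$, one gets $(\mathtt b_i^nU_q^-(\g))^{\mathbb A}v_\lambda=\sum_{k\geq n}\mathtt b_i^{(k)}V(\lambda)^{\mathbb A}$ and $(\mathtt b_{i,\mathbf c}U_q^-(\g))^{\mathbb A}v_\lambda=\mathtt b_{i,\mathbf c}V(\lambda)^{\mathbb A}$ directly from part (i) by applying $v_\lambda$; the only subtlety is that passing to the quotient $V(\lambda)$ can collapse terms, but since we are computing images of already-identified submodules this causes no trouble. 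The main obstacle I anticipate is purely the isotropic $i\in I^{\text{iso}}$ case in the $I^{\text{im}}$ argument: there $\mathscr C_i=\mathcal P_l$ rather than $\mathcal C_l$, the Kashiwara operators carry square-root normalization factors $\sqrt{l/(m_l(\mathbf c)+1)}$, and the decomposition drops terms with $(\mu+|\mathbf c|\alpha_i,\alpha_i)=0$; one must verify these normalization constants lie in $\mathbb A$ (they do, since $\F$ contains the needed square roots and $l, m_l(\mathbf c)+1$ are positive integers) and that the reindexing $\mathbf c\mapsto(\mathbf c,\mathbf c')$ respects partition multiplicities, so that directness and the $\mathcal K_i$-component description survive.
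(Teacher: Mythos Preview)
Your approach is correct and matches the paper's: the paper presents this as an immediate corollary of the preceding lemma (``we can prove the following corollary easily''), and the argument you outline---using the decomposition \eqref{eq:decomp} with each $u_n$ (resp.\ $u_{\mathbf c}$) in $U^-_{\mathbb A}(\g)$, invoking uniqueness to identify which pieces vanish when $u\in\mathtt b_i^nU_q^-(\g)$ (resp.\ $\mathtt b_{i,\mathbf c}U_q^-(\g)$), and then pushing forward by $\pi_\lambda$ for part (ii)---is exactly the intended proof.

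One small point: your worry about the square-root normalizations $\sqrt{l/(m_l(\mathbf c)+1)}$ is misplaced here. The corollary concerns only multiplication by $\mathtt b_{i,\mathbf c}$ and the $\mathcal K_i$-decomposition, not the Kashiwara operators $\widetilde f_{il}$ themselves; the relevant input from the preceding lemma is just that the decomposition pieces $u_{\mathbf c}$ lie in $U^-_{\mathbb A}(\g)\cap\mathcal K_i$. The stability of $U^-_{\mathbb A}(\g)$ under $\widetilde e_{il},\widetilde f_{il}$ is a separate consequence the paper mentions alongside, but it is not used in proving this corollary. Your combinatorial observation that for $i\in I^{\text{iso}}$ the map $\mathbf c'\mapsto \mathbf c\cup\mathbf c'$ bijects partitions onto partitions containing $\mathbf c$ is all that is needed there.
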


\vskip 3mm

\begin{proposition}\cite[Proposition 6.1.3]{Kas91}, \cite[Lemma 6.3.7]{HK2002} {\rm
Let $i\in I^{\text{re}}$ and $\mu\in P$.
\begin{itemize}
\item [(i)] For $u \in V(\lambda)_\mu$ with $n=-\mu(h_i)\geq 1$, we have
$$u=\sum_{k\geq n}(-1)^{k-n}\begin{bmatrix}k-1\\k-n\end{bmatrix}_i\mathtt b_i^{(k)}A_i^{(k)}u.$$
\item [(ii)] If $n=-\mu(h_i)\geq 0$, then we have
$$V(\lambda)_{\mu}^{\mathbb A}=\sum_{k\geq n}\mathtt b_i^{(k)}V(\lambda)^{\mathbb A}_{\mu+k\alpha_i}=(\mathtt  b_i^{ n} V(\lambda))_\mu^{\mathbb A}.$$
\end{itemize}
}\end{proposition}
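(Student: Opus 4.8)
The plan is to reduce the whole statement to the representation theory of the copy of $U_{q_i}(\mathfrak{sl}_2)$ attached to the real index $i$, exactly as in \cite[Proposition 6.1.3]{Kas91} and \cite[Lemma 6.3.7]{HK2002}: the subalgebra of $U_q(\g)$ generated by $A_i$, $\mathtt b_i=f_i$ and $K_i$ satisfies the relation \eqref{eq:comm2} together with $K_i\mathtt b_i=q_i^{-2}\mathtt b_iK_i$ and $K_iA_i=q_i^{2}A_iK_i$, which are precisely the defining relations of $U_{q_i}(\mathfrak{sl}_2)$, and none of the imaginary generators intervene in either statement. So the argument is formally the same as in the symmetrizable Kac--Moody case.

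For part (i), I would first use that $V(\lambda)$ is integrable to see that $f_i$ and $A_i$ (a scalar multiple of $e_i$) act locally nilpotently on $V(\lambda)$; hence the $U_{q_i}(\mathfrak{sl}_2)$-submodule generated by a fixed $u\in V(\lambda)_\mu$ is finite dimensional, and by complete reducibility together with linearity of the claimed identity in $u$, I may assume $u=v_j$ is a weight vector of an irreducible module $V(m)$ of highest weight $m$, with $v_j$ of weight $m-2j=-n$ (the case in which no such vector exists being vacuous), so that $j\ge n$ and $m-j=j-n$. Plugging in the standard formulas $A_i^{(k)}v_j=\begin{bmatrix}m-j+k\\ k\end{bmatrix}_i v_{j-k}$ and $\mathtt b_i^{(k)}v_{j-k}=\begin{bmatrix}j\\ k\end{bmatrix}_i v_j$ reduces the assertion to the $q$-binomial identity
\[
\sum_{k=n}^{j}(-1)^{k-n}\begin{bmatrix}k-1\\ k-n\end{bmatrix}_i\begin{bmatrix}j-n+k\\ k\end{bmatrix}_i\begin{bmatrix}j\\ k\end{bmatrix}_i=1,
\]
which I would prove by induction on $j$ (or by the standard $q$-binomial manipulations in \cite{Lusztig}). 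This identity is the only non-formal point in the whole proof, and it is where I expect the real work to lie.

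For part (ii), the equality $\sum_{k\ge n}\mathtt b_i^{(k)}V(\lambda)^{\mathbb A}_{\mu+k\alpha_i}=(\mathtt b_i^{n}V(\lambda))_\mu^{\mathbb A}$ is simply the weight-$\mu$ component of the Corollary above (with the convention $\mathtt b_i^{0}=1$ when $n=0$), so nothing is needed there. For the first equality, the inclusion $\supseteq$ is immediate since each $\mathtt b_i^{(k)}\in U^-_{\mathbb A}(\g)$ and $V(\lambda)^{\mathbb A}=U^-_{\mathbb A}(\g)v_\lambda$ is a $U^-_{\mathbb A}(\g)$-module. For $\subseteq$: when $n=0$ the $k=0$ summand already equals $V(\lambda)^{\mathbb A}_\mu$; when $n\ge 1$, take $u\in V(\lambda)^{\mathbb A}_\mu=U_{\mathbb A}(\g)v_\lambda\cap V(\lambda)_\mu$, note that $A_i^{(k)}\in U^+_{\mathbb A}(\g)\subseteq U_{\mathbb A}(\g)$ forces $A_i^{(k)}u\in V(\lambda)^{\mathbb A}_{\mu+k\alpha_i}$, that the sum over $k$ is finite by local nilpotence of $A_i$, and that the scalars $(-1)^{k-n}\begin{bmatrix}k-1\\ k-n\end{bmatrix}_i$ lie in $\mathbb A$; then the expansion from part (i) exhibits $u$ as an element of $\sum_{k\ge n}\mathtt b_i^{(k)}V(\lambda)^{\mathbb A}_{\mu+k\alpha_i}$, which finishes the argument.
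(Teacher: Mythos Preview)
Your proposal is correct and follows the standard route: the paper itself gives no proof for this proposition, merely citing \cite[Proposition 6.1.3]{Kas91} and \cite[Lemma 6.3.7]{HK2002}, and your argument is precisely the one found in those references --- reduction to the $U_{q_i}(\mathfrak{sl}_2)$ generated by $A_i,\mathtt b_i,K_i$ via \eqref{eq:comm2}, decomposition into irreducible $\mathfrak{sl}_2$-modules, and the resulting $q$-binomial identity (which does hold; your small-case checks extend by the usual manipulations). Your derivation of (ii) from (i) together with the preceding Corollary is exactly how the cited sources obtain it as well.
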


\vskip 8mm

\section{Existence and uniqueness of global bases}

\vskip 2mm

Let $V$ be a finite-dimensional vector space over $\F(q)$, $M$ be  an $\mathbb A$-submodule of $V$,  and $L_0$ (resp. $L_\infty$) be a free $\mathbb A_0$-submodule (resp. free $\mathbb A_\infty$-submodule) of $V$ such that $V\cong \F(q)\otimes_{\mathbb A_0}L_0 \cong \F(q)\otimes_{\mathbb A_\infty}L_\infty$.

\vskip 3mm

\begin{lemma}\cite[Lemma 7.1.1]{Kas91}\label{K1} {\rm
 Let $V, M, L_0, L_\infty$  be as above.
\begin{itemize}
\item [(i)] Assume that the canonical map $M\cap L_0 \cap L_\infty\rightarrow M\cap L_0/M\cap qL_0$ is an isomorphism.  Then
    $$M\cap L_0\cong \F[q]\otimes_{\F}(M\cap L_0 \cap L_\infty),$$
    $$M\cap L_\infty \cong \F[q^{-1}]\otimes_{\F}(M\cap L_0 \cap L_\infty),$$
    $$M\cong \mathbb A\otimes_{\F}(M\cap L_0 \cap L_\infty),$$
    $$M\cap L_0 \cap L_\infty\cong (M\cap L_\infty)/(M\cap q^{-1}L_\infty),$$
    $$M\cap L_0 \cap L_\infty\simeq (\F(q)\otimes_{\mathbb A}M)\cap L_0/(\F(q)\otimes_{\mathbb A}M)\cap qL_0.$$
\item [(ii)] Let $E$ be  an  $\F$-vector space and $\varphi:E\rightarrow M\cap L_0 \cap L_\infty$ a homomorphism. Assume that
$M=\mathbb A\varphi(E)$ and $E\rightarrow L_0/qL_0, E\rightarrow L_\infty/q^{-1}L_\infty$ are injective. Then $E\rightarrow M\cap L_0 \cap L_\infty \rightarrow M\cap L_0/M \cap qL_0$ is an isomorphism.
\end{itemize}
}\end{lemma}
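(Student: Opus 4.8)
The plan is to reduce the statement to commutative algebra over the coefficient rings governing the behaviour at $q=0$ and at $q=\infty$. The relevant facts are that $\mathbb A_0$ and $\mathbb A_\infty$ are discrete valuation rings with residue field $\F$ and uniformizers $q$ and $q^{-1}$ respectively, that $\mathbb A=\F[q,q^{-1}]$ is a principal ideal domain, and that inside $\F(q)$ one has $\mathbb A\cap\mathbb A_0=\F[q]$, $\mathbb A\cap\mathbb A_\infty=\F[q^{-1}]$ and $\mathbb A\cap\mathbb A_0\cap\mathbb A_\infty=\F$. Two preliminary observations will be used throughout. First, since $M$ is stable under multiplication by $q^{\pm1}\in\mathbb A$, one checks directly that $M\cap qL_0=q(M\cap L_0)$ and $M\cap q^{-1}L_\infty=q^{-1}(M\cap L_\infty)$; hence $(M\cap L_0)/(M\cap qL_0)$ embeds into $L_0/qL_0$, which is finite dimensional over $\F$ because $\dim_{\F(q)}V<\infty$ (and symmetrically at $\infty$). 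In particular both $M\cap L_0\cap L_\infty$ and the image of $\varphi$ are automatically finite dimensional over $\F$. Note also that (i) and (ii) fit together: the hypothesis of (i) is exactly the conclusion of (ii), so (ii) furnishes a checkable criterion under which all the identities of (i) become available.

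I would prove (ii) first, by a lowest-term argument. Since $E\to L_0/qL_0$ is injective and factors through $\varphi$, the map $\varphi$ is injective, so we identify $E$ with $\varphi(E)\subseteq M\cap L_0\cap L_\infty$; then $M=\mathbb A E=\sum_{n\in\Z}q^nE$ and $E\cap qL_0=0$. The last equality already gives injectivity of $E\to(M\cap L_0)/(M\cap qL_0)$. For surjectivity it suffices to show $M=\bigoplus_{n\in\Z}q^nE$ and $M\cap L_0=\bigoplus_{n\ge0}q^nE$, since then $M\cap qL_0=\bigoplus_{n\ge1}q^nE$ and $(M\cap L_0)/(M\cap qL_0)$ is identified with $E$ via the inclusion of the degree-zero part. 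Directness of $\sum q^nE$ follows because a nontrivial relation $\sum_{n\ge a}q^ne_n=0$ with $e_a\ne0$, $e_n\in E$, gives $e_a=-q\sum_{n>a}q^{n-a-1}e_n\in q(M\cap L_0)\subseteq qL_0$, contradicting $E\cap qL_0=0$. The inclusion $M\cap L_0\subseteq\bigoplus_{n\ge0}q^nE$ follows by excluding a negative lowest exponent: if $x=\sum_{n=a}^bq^ne_n\in M\cap L_0$ with $e_a\ne0$ and $a<0$, then $q^{-a-1}x\in M\cap L_0$ has lowest exponent $-1$, so after renaming $x=q^{-1}e_{-1}+e_0+\cdots+q^be_b$, and $qx\in qL_0$ together with $qx\equiv e_{-1}\pmod{qL_0}$ forces $e_{-1}\in qL_0$, which is impossible. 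The hypothesis that $E\to L_\infty/q^{-1}L_\infty$ be injective is what lets one run the mirror argument, giving $M\cap L_\infty=\bigoplus_{n\le0}q^nE$.

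For (i) the hypothesis is that the composite $E:=M\cap L_0\cap L_\infty\to(M\cap L_0)/(M\cap qL_0)$ is an isomorphism, and all five identities must be extracted from it. Injectivity gives $E\cap q(M\cap L_0)=0$, hence $E\cap qL_0=0$, so $E$ has an $\F$-basis $e_1,\dots,e_d$ whose images in $L_0/qL_0$ are linearly independent, which makes $e_1,\dots,e_d$ linearly independent over $\mathbb A_0$. Surjectivity, iterated, yields $M\cap L_0=\bigoplus_j\F[q]e_j+q^n(M\cap L_0)$ for every $n$, hence $\mathbb A_0(M\cap L_0)=\bigoplus_j\mathbb A_0e_j+q^n\,\mathbb A_0(M\cap L_0)$; since $L_0$ is finite free over the Noetherian local ring $\mathbb A_0$ these modules are finitely generated, so Nakayama forces $\mathbb A_0(M\cap L_0)=\bigoplus_j\mathbb A_0e_j$, and likewise on the $L_\infty$ side. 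Because every element of $M$ lies in the $\F(q)$-span of $e_1,\dots,e_d$ (multiply by a large power of $q$ to land in $M\cap L_0$) and $\bigoplus_j\mathbb A e_j\subseteq M$ (as $e_j\in E\subseteq M$ and $M$ is an $\mathbb A$-module), comparing the $q$- and $q^{-1}$-gradings forces $M=\bigoplus_j\mathbb A e_j\cong\mathbb A\otimes_\F E$; then $M\cap L_0\cong\F[q]\otimes_\F E$ and $M\cap L_\infty\cong\F[q^{-1}]\otimes_\F E$, and the remaining two displayed isomorphisms follow by bookkeeping with these gradings. The step I expect to be the main obstacle is precisely the passage from ``$M\cap L_0=\F[q]E+q^n(M\cap L_0)$ for all $n$'' — which says only that each element of $M\cap L_0$ agrees with a finite $\F[q]$-combination of the $e_j$ to arbitrarily high order at $q=0$ — to the genuine lattice decomposition $M\cap L_0=\bigoplus_{n\ge0}q^nE$; this is where finite dimensionality of $V$, the discrete valuation / Nakayama structure of $\mathbb A_0$ and $\mathbb A_\infty$, and the simultaneous control at $q=0$ and $q=\infty$ must all be used together, after which the remaining assertions are formal.
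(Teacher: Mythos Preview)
The paper does not give its own proof of this lemma; it simply quotes \cite[Lemma~7.1.1]{Kas91} and uses it as a black box. Your sketch is essentially the standard argument from Kashiwara's paper, and part~(ii) is handled correctly: the lowest-term argument using $E\cap qL_0=0$ and $M=\bigoplus_n q^nE$ is exactly how one proves it.

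In part~(i) there is one place where your write-up is too quick. You write ``likewise on the $L_\infty$ side'' to obtain $\mathbb A_\infty(M\cap L_\infty)=\bigoplus_j \mathbb A_\infty e_j$, but the hypothesis only gives surjectivity of $E\to (M\cap L_0)/(M\cap qL_0)$, not of $E\to (M\cap L_\infty)/(M\cap q^{-1}L_\infty)$, so the Nakayama argument cannot be run symmetrically as stated. The missing step---which you correctly flag as the crux---goes as follows. From injectivity at $q=0$ one deduces $E\cap q^{-1}L_\infty=0$: if $e\in E\cap q^{-1}L_\infty$ then $qe\in M\cap qL_0\cap L_\infty\subseteq E\cap q(M\cap L_0)=0$. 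Setting $V'=\F(q)\otimes_{\mathbb A}M$ and $L_\infty'=L_\infty\cap V'$, the $e_j$ are $d=\dim_{\F(q)}V'$ elements of $L_\infty'$ whose images in $L_\infty'/q^{-1}L_\infty'$ are linearly independent; since $\dim_{\F}(L_\infty'/q^{-1}L_\infty')=d$, they form a basis, and Nakayama gives $L_\infty'=\bigoplus_j\mathbb A_\infty e_j$. Now any $x\in M$ lies in $\bigoplus_j\F(q)e_j$, and comparing its coordinates against $L_0'=\bigoplus_j\mathbb A_0 e_j$ and $L_\infty'$ forces them into $\mathbb A_0\cap\mathbb A_\infty\cdot q^{\Z}=\mathbb A$, yielding $M=\bigoplus_j\mathbb A e_j$; the remaining identities then follow as you say. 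With this one step made explicit, your argument is complete and matches the original.
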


\vskip 3mm

\begin{lemma}\cite[Lemma 7.1.2]{Kas91}\label{K2} {\rm
 Let $V, M, L_0, L_\infty$ be as above  and  let $N$ be an $\mathbb A$-submodule of $M$.  Assume that
\begin{itemize}
\item [(1)] $N\cap L_0\cap L_\infty\cong N\cap L_0/N\cap qL_0$.
\item [(2)] There exist an $\F$-vector space $F$ and a homomorphism $\varphi:F\rightarrow M\cap(L_0+N)\cap (L_\infty+N)$ such that\\
    (a) $M=\mathbb A\varphi(F)+N$,\\
    (b) the induced homomorphisms  $\varphi_0:F\rightarrow (L_0+N) /  (qL_0+N)$ and $\varphi_\infty:F\rightarrow (L_\infty+N) / (q^{-1}L_\infty+N)$ are injective.
\end{itemize}

\vskip 2mm

Then the following statements hold.

\vskip 2mm

\begin{itemize}
\item [(i)] $M\cap L_0\cap L_\infty\rightarrow M\cap L_0/M\cap qL_0$ is an isomorphism.
\item [ii)] $M\cap L_0/M\cap qL_0 \cong F\oplus (N\cap L_0/N\cap qL_0)$.
\end{itemize}
}\end{lemma}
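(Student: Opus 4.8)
The plan is to manufacture, out of the structure of $N$ and the map $\varphi$, an $\F$-vector space $E$ together with a homomorphism $\varphi'\colon E\to M\cap L_0\cap L_\infty$ satisfying the hypotheses of Lemma~\ref{K1}(ii); then (i) is an immediate consequence and (ii) follows by identifying $E$. First I would extract the structure of $N$ from hypothesis (1). Viewing $N$ inside $W:=\F(q)\cdot N\subseteq V$, the modules $W\cap L_0$ and $W\cap L_\infty$ are free lattices of $W$ (since $L_0$, $L_\infty$ span $V$), and hypothesis (1) is exactly the hypothesis of Lemma~\ref{K1}(i) for the triple $(W,N,W\cap L_0,W\cap L_\infty)$. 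That lemma then gives $N=\mathbb A\cdot E_N$, $N\cap L_0=\F[q]\cdot E_N$ and $N\cap L_\infty=\F[q^{-1}]\cdot E_N$, where $E_N:=N\cap L_0\cap L_\infty$. Two consequences will be used repeatedly: $N=(N\cap L_0)+(N\cap L_\infty)$ (because $\F[q]+\F[q^{-1}]=\mathbb A$), and $E_N\cap qL_0=E_N\cap q^{-1}L_\infty=0$ (because the canonical maps $E_N\to N\cap L_0/N\cap qL_0$ and $E_N\to N\cap L_\infty/N\cap q^{-1}L_\infty$ produced by Lemma~\ref{K1}(i) are injective).

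Next I would "straighten" $\varphi$. Fix $f\in F$; since $\varphi(f)\in L_0+N$ and $\varphi(f)\in L_\infty+N$, write $\varphi(f)=a+b$ with $a\in L_0$, $b\in N$, and $\varphi(f)=c+d$ with $c\in L_\infty$, $d\in N$, and decompose $b=b_0+b_\infty$, $d=d_0+d_\infty$ according to $N=(N\cap L_0)+(N\cap L_\infty)$. One checks directly that $\varphi(f)-(b_\infty+d_0)$ lies in $L_0\cap L_\infty$, and since $\varphi(f)$ and $b_\infty+d_0$ lie in $M$, it lies in $M\cap L_0\cap L_\infty$. Choosing such an element $n_\alpha\in N$ for each member $f_\alpha$ of a basis of $F$ and extending $\F$-linearly produces $\F$-linear maps $\psi\colon F\to M\cap L_0\cap L_\infty$, $\psi(f_\alpha)=\varphi(f_\alpha)-n_\alpha$, and $\nu:=\varphi-\psi\colon F\to N$. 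Set $E:=F\oplus E_N$ and let $\varphi'\colon E\to M\cap L_0\cap L_\infty$ be $\psi$ on $F$ and the inclusion on $E_N$. Then $\mathbb A\varphi'(E)=\mathbb A\psi(F)+\mathbb A E_N=\mathbb A\varphi(F)+N=M$ by (2a), using $\nu(F)\subseteq N$ and $\mathbb A E_N=N$. Moreover, if $\varphi'(f,e)\in qL_0$ then $\varphi(f)=\psi(f)+\nu(f)\in qL_0+N$, so $f=0$ by injectivity of $\varphi_0$ in (2b), and then $e\in E_N\cap qL_0=0$; hence $E\to L_0/qL_0$ is injective, and symmetrically $E\to L_\infty/q^{-1}L_\infty$ is injective. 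Thus Lemma~\ref{K1}(ii) applies to $E$ and $\varphi'$.

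Finally, Lemma~\ref{K1}(ii) yields that the canonical map $M\cap L_0\cap L_\infty\to M\cap L_0/M\cap qL_0$ is an isomorphism and that $E\cong M\cap L_0\cap L_\infty$; the first statement is assertion (i). For assertion (ii), combine these with $E=F\oplus E_N=F\oplus(N\cap L_0\cap L_\infty)$ and the isomorphism $N\cap L_0\cap L_\infty\cong N\cap L_0/N\cap qL_0$ of hypothesis (1), obtaining $M\cap L_0/M\cap qL_0\cong F\oplus(N\cap L_0/N\cap qL_0)$. I expect the main obstacle to be Step~2: producing the decomposition $\varphi(f)=\psi(f)+\nu(f)$ with $\psi(f)\in M\cap L_0\cap L_\infty$, and making sure the straightening can be carried out $\F$-linearly — this is precisely where the structure of $N$ distilled from hypothesis (1) in Step~1 is indispensable; the remaining verifications are routine bookkeeping with lattices.
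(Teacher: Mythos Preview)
The paper does not give its own proof of this lemma; it is quoted verbatim from \cite[Lemma~7.1.2]{Kas91} and used as a black box. So there is no in-paper argument to compare against. That said, your proof is correct and is essentially Kashiwara's original argument: first exploit hypothesis~(1) via Lemma~\ref{K1}(i) to get $N=\mathbb A\,E_N$ with $E_N=N\cap L_0\cap L_\infty$ and hence $N=(N\cap L_0)+(N\cap L_\infty)$; then use this decomposition to correct each $\varphi(f_\alpha)$ by an element of $N$ so that it lands in $M\cap L_0\cap L_\infty$; finally apply Lemma~\ref{K1}(ii) to $E=F\oplus E_N$. Your verification that $\varphi(f)-(b_\infty+d_0)\in L_0\cap L_\infty$ and the injectivity check for $\varphi'$ modulo $qL_0$ (first kill $f$ via $\varphi_0$, then kill $e$ via $E_N\cap qL_0=0$) are exactly the two points where care is needed, and you handle both correctly.

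One small remark on the last step: as stated in this paper, Lemma~\ref{K1}(ii) literally asserts only that the \emph{composite} $E\to M\cap L_0\cap L_\infty\to M\cap L_0/M\cap qL_0$ is an isomorphism, whereas you use that each arrow separately is an isomorphism. This is fine --- once the composite is an isomorphism, the hypothesis of Lemma~\ref{K1}(i) is met (the canonical map is surjective, and the conclusions of~(i) then force $M\cap L_0\cap L_\infty$ to have the same $\F$-dimension as $M\cap L_0/M\cap qL_0$), so both arrows are isomorphisms --- but it is worth saying explicitly, since that is how you extract both~(i) and the identification $E\cong M\cap L_0\cap L_\infty$ needed for~(ii).
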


\vskip 3mm

For $r\geq 0$, set $$Q_+(r)=\{\alpha\in Q_+\mid |\alpha|\leq r\}.$$

 We will prove the following  inductive statements on $r\geq 0$.
\vskip 3mm
\begin{itemize}
\item [A($r$)]: For any $\alpha\in Q_+(r)$, we have the following canonical isomorphism
$$U^-_{\mathbb A}(\g)_{-\alpha}\cap \mathcal L(\infty)\cap\mathcal L(\infty)^- \xrightarrow{\sim} \frac{U^-_{\mathbb A}(\g)_{-\alpha}\cap \mathcal L(\infty)}{U^-_{\mathbb A}(\g)_{-\alpha}\cap q\mathcal L(\infty)}\xrightarrow{\sim} \mathcal L(\infty)_{-\alpha}/q\mathcal L(\infty)_{-\alpha}.$$
\end{itemize}
We denote by $G_\infty$ the inverse of this isomorphism.

\vskip 2mm

\begin{itemize}
\item [B($r$)]: For any $\alpha\in Q_+(r)$ and $\lambda\in P^{+}$, we have the following canonical isomorphism
$$V(\lambda)^{\mathbb A}_{\lambda-\alpha}\cap \mathcal L(\lambda)\cap\mathcal L(\lambda)^- \xrightarrow{\sim} \frac{V(\lambda)^{\mathbb A}_{\lambda-\alpha}\cap \mathcal L(\lambda)}{V(\lambda)^{\mathbb A}_{\lambda-\alpha}\cap q\mathcal L(\lambda)}\xrightarrow{\sim} \mathcal L(\lambda)_{\lambda-\alpha}/q\mathcal L(\lambda)_{\lambda-\alpha}.$$
\end{itemize}
We denote by $G_\lambda$ the inverse of this isomorphism.

\vskip 2mm

\begin{itemize}
\item [C($r$)]: For $\alpha\in Q_+(r)$, $(i,l)\in I^{\infty}$, and $n\geq 0$, assume that $b\in \widetilde{f}_{il}^n \, \mathcal B(\infty)_{-\alpha+l n\, \alpha_i}$.  Then we have
    $$G_\infty(b)\in \mathtt b_{il}^n \, U_q^-(\g).$$
\end{itemize}

\vskip 3mm

If $r=0$, our assertions are obvious. Now we assume that A($r-1$), B($r-1$) and C($r-1$) are true.  Then  Lemma \ref{K1} and Proposition \ref{P4} imply the following result.

\vskip 3mm

\begin{lemma}{\rm
For $\alpha\in Q_+(r-1)$, we have
$$\begin{aligned}
& U^-_{\mathbb A}(\g)_{-\alpha}\cap \mathcal L(\infty)=\bigoplus_{b\in \mathcal B(\infty)_{-\alpha}}\F[q]G_\infty(b), \quad
U^-_{\mathbb A}(\g)_{-\alpha}=\bigoplus_{b\in \mathcal B(\infty)_{-\alpha}}\mathbb AG_\infty(b),\\
& V(\lambda)^{\mathbb A}_{\lambda-\alpha}\cap \mathcal L(\lambda)=\bigoplus_{b\in \mathcal B(\lambda)_{\lambda-\alpha}}\F[q]G_\lambda(b), \quad
V(\lambda)^{\mathbb A}_{\lambda-\alpha}=\bigoplus_{b\in \mathcal B(\lambda)_{\lambda-\alpha}}\mathbb AG_\lambda(b),
\end{aligned}$$
and
$$G_\infty(b)v_\lambda=G_\lambda(\overline \pi_\lambda (b)).$$
}\end{lemma}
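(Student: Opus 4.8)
The plan is to build the four direct-sum decompositions by feeding the hypotheses A($r-1$), B($r-1$), C($r-1$) into the machinery of Lemma \ref{K1}, and then to obtain the final compatibility identity $G_\infty(b)v_\lambda = G_\lambda(\overline\pi_\lambda(b))$ from Proposition \ref{P4} by comparing the two global basis constructions under $\pi_\lambda$. First I would invoke A($r-1$): the canonical isomorphism $U^-_{\mathbb A}(\g)_{-\alpha}\cap\mathcal L(\infty)\cap\mathcal L(\infty)^-\xrightarrow{\sim}\mathcal L(\infty)_{-\alpha}/q\mathcal L(\infty)_{-\alpha}$ means the hypothesis ``$M\cap L_0\cap L_\infty\to M\cap L_0/M\cap qL_0$ is an isomorphism'' of Lemma \ref{K1}(i) holds with $V=U^-_q(\g)_{-\alpha}$, $M=U^-_{\mathbb A}(\g)_{-\alpha}$, $L_0=\mathcal L(\infty)_{-\alpha}$, $L_\infty=\mathcal L(\infty)^-_{-\alpha}$ (these $L_0,L_\infty$ being free lattices with $\F(q)\otimes L_0\cong\F(q)\otimes L_\infty\cong V$ by the results already recorded in Section 4 and Proposition \ref{P2}). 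Lemma \ref{K1}(i) then yields $M\cap L_0\cong\F[q]\otimes_\F(M\cap L_0\cap L_\infty)$ and $M\cong\mathbb A\otimes_\F(M\cap L_0\cap L_\infty)$; since $\{G_\infty(b)\mid b\in\mathcal B(\infty)_{-\alpha}\}$ is by definition an $\F$-basis of $M\cap L_0\cap L_\infty$ (the image of the crystal basis $\mathcal B(\infty)_{-\alpha}$ under $G_\infty$), this gives exactly $U^-_{\mathbb A}(\g)_{-\alpha}\cap\mathcal L(\infty)=\bigoplus_b\F[q]G_\infty(b)$ and $U^-_{\mathbb A}(\g)_{-\alpha}=\bigoplus_b\mathbb A\,G_\infty(b)$. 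The argument for $V(\lambda)$ is verbatim the same using B($r-1$) in place of A($r-1$), with $M=V(\lambda)^{\mathbb A}_{\lambda-\alpha}$, $L_0=\mathcal L(\lambda)_{\lambda-\alpha}$, $L_\infty=\mathcal L(\lambda)^-_{\lambda-\alpha}$.

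For the final identity, I would argue as follows. By Proposition \ref{P4}(i), $\pi_\lambda(\mathcal L(\infty))=\mathcal L(\lambda)$ and $\pi_\lambda$ commutes with $-$ (since $^-$ on $V(\lambda)$ is defined by $Pv_\lambda\mapsto\overline P v_\lambda$ and $\overline{\mathtt b}_{il}=\mathtt b_{il}$), so $\pi_\lambda(\mathcal L(\infty)^-)=\mathcal L(\lambda)^-$; moreover $\pi_\lambda(U^-_{\mathbb A}(\g))=V(\lambda)^{\mathbb A}$ by the very definition $V(\lambda)^{\mathbb A}=U^-_{\mathbb A}(\g)v_\lambda$. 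Hence $\pi_\lambda$ carries $U^-_{\mathbb A}(\g)_{-\alpha}\cap\mathcal L(\infty)\cap\mathcal L(\infty)^-$ into $V(\lambda)^{\mathbb A}_{\lambda-\alpha}\cap\mathcal L(\lambda)\cap\mathcal L(\lambda)^-$. Now fix $b\in\mathcal B(\infty)_{-\alpha}$. The element $G_\infty(b)$ lies in $U^-_{\mathbb A}(\g)_{-\alpha}\cap\mathcal L(\infty)\cap\mathcal L(\infty)^-$, is $-$-invariant, and satisfies $G_\infty(b)\equiv b\pmod{q\mathcal L(\infty)}$. Applying $\pi_\lambda$, the element $\pi_\lambda(G_\infty(b))=G_\infty(b)v_\lambda$ lies in $V(\lambda)^{\mathbb A}_{\lambda-\alpha}\cap\mathcal L(\lambda)\cap\mathcal L(\lambda)^-$, is $-$-invariant, and reduces modulo $q\mathcal L(\lambda)$ to $\overline\pi_\lambda(b)$ by Proposition \ref{P4}(i). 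But $G_\lambda(\overline\pi_\lambda(b))$ is, by B($r-1$) and the uniqueness clause in Lemma \ref{K1}, the \emph{unique} element of $V(\lambda)^{\mathbb A}_{\lambda-\alpha}\cap\mathcal L(\lambda)\cap\mathcal L(\lambda)^-$ that is $-$-invariant and congruent to $\overline\pi_\lambda(b)$ modulo $q\mathcal L(\lambda)$ — here one uses that the composite $V(\lambda)^{\mathbb A}_{\lambda-\alpha}\cap\mathcal L(\lambda)\cap\mathcal L(\lambda)^-\to\mathcal L(\lambda)_{\lambda-\alpha}/q\mathcal L(\lambda)_{\lambda-\alpha}$ is injective, so that being fixed by $-$ pins down the representative. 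Therefore $G_\infty(b)v_\lambda=G_\lambda(\overline\pi_\lambda(b))$, as claimed. (When $\overline\pi_\lambda(b)=0$ both sides vanish.)

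The step I expect to be the main (though mild) obstacle is verifying cleanly that the $\F$-basis $\{G_\infty(b)\}$ produced abstractly from Lemma \ref{K1}(i) genuinely agrees with the basis indexed by $\mathcal B(\infty)_{-\alpha}$ — that is, making precise that $G_\infty$ as ``the inverse of the canonical isomorphism in A($r$)'' sends each crystal basis element $b$ to the element of $M\cap L_0\cap L_\infty$ lifting it, and hence that these lifts form an $\F$-basis. This is essentially a bookkeeping matter: the isomorphism $M\cap L_0\cap L_\infty\xrightarrow{\sim}L_0/qL_0$ restricted to $\mathcal L(\infty)_{-\alpha}/q\mathcal L(\infty)_{-\alpha}$ identifies the finite set $\mathcal B(\infty)_{-\alpha}$ with an $\F$-basis of the source, and freeness of $M\cap L_0$ over $\F[q]$ on this set is exactly the content of Lemma \ref{K1}(i). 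A secondary point requiring a line of care is the reduction from the ambient (infinite-dimensional) algebra to the finite-dimensional weight space $U^-_q(\g)_{-\alpha}$ so that Lemma \ref{K1} applies literally; this is immediate since all the modules, lattices, and forms involved are $Q_-$-graded and A($r-1$) is already stated weight-space by weight-space.
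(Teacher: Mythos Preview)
Your proposal is correct and matches the paper's approach exactly: the paper records this lemma as an immediate consequence of Lemma~\ref{K1} and Proposition~\ref{P4}, and your argument is precisely the unpacking of that sentence. One small simplification: you do not need to invoke $-$-invariance of $G_\infty(b)$ (which in the paper is the content of the \emph{next} lemma); the isomorphism in B($r-1$) already says the map $V(\lambda)^{\mathbb A}_{\lambda-\alpha}\cap\mathcal L(\lambda)\cap\mathcal L(\lambda)^-\to \mathcal L(\lambda)_{\lambda-\alpha}/q\mathcal L(\lambda)_{\lambda-\alpha}$ is injective, so membership in the triple intersection together with the congruence modulo $q\mathcal L(\lambda)$ already pins down $G_\infty(b)v_\lambda$ uniquely as $G_\lambda(\overline\pi_\lambda(b))$.
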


\vskip 3mm

\begin{lemma}{\rm
For $\alpha\in Q_+(r-1)$, $b\in \mathcal B(\infty)_{-\alpha}$ (resp. $b\in \mathcal B(\lambda)_{\lambda-\alpha}$), we have $\overline{G_\infty(b)}=G_\infty(b)$ (resp. $\overline{G_\lambda(b)}=G_\lambda(b)$).
\begin{proof}
Let $Q=(G_\infty(b)-\overline{G_\infty(b)})/(q-q^{-1})$. Then we have $Q\in U^-_{\mathbb A}(\g)_{-\alpha}\cap q\mathcal L(\infty)\cap \mathcal L(\infty)^-$ since $1/(q-q^{-1})\in q\mathbb A_0$.
\end{proof}
}\end{lemma}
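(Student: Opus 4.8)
The plan is to show that the element $Q=(G_\infty(b)-\overline{G_\infty(b)})/(q-q^{-1})$ actually vanishes, which forces $\overline{G_\infty(b)}=G_\infty(b)$. First I would record that the three-step isomorphism in A($r-1$) identifies $U^-_{\mathbb A}(\g)_{-\alpha}\cap\mathcal L(\infty)\cap\mathcal L(\infty)^-$ with $\mathcal L(\infty)_{-\alpha}/q\mathcal L(\infty)_{-\alpha}$, and that by construction $G_\infty(b)$ is the unique element of this bar-invariant lattice intersection whose class mod $q\mathcal L(\infty)$ equals $b$. So it suffices to check that $Q$ lies in the same intersection $U^-_{\mathbb A}(\g)_{-\alpha}\cap\mathcal L(\infty)\cap\mathcal L(\infty)^-$ and is congruent to $0$ modulo $q\mathcal L(\infty)$; uniqueness in the isomorphism then gives $Q=0$.

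The membership claims are exactly what the stub already begins: $G_\infty(b)$ and $\overline{G_\infty(b)}$ both lie in $U^-_{\mathbb A}(\g)_{-\alpha}$ (this is stable under $^-$, since $\overline{\mathtt b}_{il}=\mathtt b_{il}$ and the divided powers $\mathtt b_i^{(n)}$ are bar-invariant), so their difference is in $U^-_{\mathbb A}(\g)_{-\alpha}$; dividing by $q-q^{-1}$ keeps us in $U^-_{\mathbb A}(\g)$ because $1/(q-q^{-1})\in\mathbb A$ is not available, but the difference $G_\infty(b)-\overline{G_\infty(b)}$ is divisible by $q-q^{-1}$ inside $U^-_{\mathbb A}(\g)$ — here one uses that both terms reduce to the same class $b$ modulo $q\mathcal L(\infty)$ and, symmetrically, the same class modulo $q^{-1}\mathcal L(\infty)^-$, so the difference lies in $q\mathcal L(\infty)\cap q^{-1}\mathcal L(\infty)^-\cap U^-_{\mathbb A}(\g)$, and on this triple intersection division by $q-q^{-1}$ lands back in $U^-_{\mathbb A}(\g)_{-\alpha}$ by A($r-1$) (the $\mathbb A$-module is free over the bar-invariant part, so $q-q^{-1}$ acts injectively with the expected image). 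For the lattice conditions: $1/(q-q^{-1})\in q\mathbb A_0$, so multiplying the element $G_\infty(b)-\overline{G_\infty(b)}\in q\mathcal L(\infty)$ — wait, more carefully, $G_\infty(b)-\overline{G_\infty(b)}\in\mathcal L(\infty)$ and its image in $\mathcal L(\infty)/q\mathcal L(\infty)$ is $b-b=0$, so it lies in $q\mathcal L(\infty)$; hence $Q=(G_\infty(b)-\overline{G_\infty(b)})/(q-q^{-1})\in q\mathcal L(\infty)\cdot(q\mathbb A_0)^{-1}$… the clean statement is simply $Q\in\mathcal L(\infty)$ since $(q-q^{-1})^{-1}\in q^{-1}\mathbb A_0$ would push $q\mathcal L(\infty)$ into $\mathcal L(\infty)$; and symmetrically $Q\in\mathcal L(\infty)^-$. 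Moreover $\overline Q=(\overline{G_\infty(b)}-G_\infty(b))/(q^{-1}-q)=Q$, so $Q$ is bar-invariant.

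So $Q\in U^-_{\mathbb A}(\g)_{-\alpha}\cap\mathcal L(\infty)\cap\mathcal L(\infty)^-$, and I claim $Q\in q\mathcal L(\infty)$: indeed $G_\infty(b)-\overline{G_\infty(b)}\in q\mathcal L(\infty)$, and also $G_\infty(b)-\overline{G_\infty(b)}\in q^{-1}\mathcal L(\infty)^-\cap(-q^{-1}\mathcal L(\infty)^-)$ forces it into $q\mathcal L(\infty)\cap q^{-1}\mathcal L(\infty)^-$, on which $(q-q^{-1})^{-1}$ lands in $\mathcal L(\infty)\cap\mathcal L(\infty)^-$ but in fact — using that near $q=0$ the factor $(q-q^{-1})^{-1}$ contributes an extra $q$ — one gets $Q\in q\mathcal L(\infty)$. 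Then by the isomorphism A($r-1$) applied to $Q$, which is bar-invariant and integral and reduces to $0$ mod $q\mathcal L(\infty)$, we conclude $Q=0$. The identical argument with $\mathcal L(\lambda)$, $V(\lambda)^{\mathbb A}$ and Proposition \ref{P4} in place of $U^-_{\mathbb A}(\g)$ gives $\overline{G_\lambda(b)}=G_\lambda(b)$.

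\textbf{Main obstacle.} The delicate point is the bookkeeping with the scalar $(q-q^{-1})^{-1}$: one must verify that dividing a bar-antisymmetric element of $q\mathcal L(\infty)\cap q^{-1}\mathcal L(\infty)^-$ by $q-q^{-1}$ yields an element still in $U^-_{\mathbb A}(\g)$ (integrality) and still in $q\mathcal L(\infty)$ (the correct power of $q$), and this is precisely where the freeness of $U^-_{\mathbb A}(\g)_{-\alpha}$ over its bar-invariants from A($r-1$) — i.e. that $q-q^{-1}$ is a non-zero-divisor with the expected cokernel — must be invoked rather than assumed. Everything else is the standard uniqueness argument of Kashiwara.
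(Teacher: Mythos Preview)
Your approach is the paper's approach: set $Q=(G_\infty(b)-\overline{G_\infty(b)})/(q-q^{-1})$, place $Q$ in $U^-_{\mathbb A}(\g)_{-\alpha}\cap q\mathcal L(\infty)\cap\mathcal L(\infty)^-$, and invoke the injectivity in A$(r-1)$ to get $Q=0$. The paper's entire proof is the one line you quote; you are simply unpacking it, and you are right that the membership $Q\in q\mathcal L(\infty)$ comes from $1/(q-q^{-1})\in q\mathbb A_0$, that $\overline Q=Q$, and hence $Q\in\mathcal L(\infty)^-$.

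There is one genuine slip in your write-up. You assert that the image of $G_\infty(b)-\overline{G_\infty(b)}$ in $\mathcal L(\infty)/q\mathcal L(\infty)$ is ``$b-b=0$''. The first $b$ is fine; the second --- the class of $\overline{G_\infty(b)}$ modulo $q\mathcal L(\infty)$ --- is precisely what the lemma computes, and assuming it equals $b$ is circular. Relatedly, your proposed mechanism for the integrality $Q\in U^-_{\mathbb A}(\g)_{-\alpha}$ does not quite close: by Lemma~\ref{K1}(i) we have $U^-_{\mathbb A}(\g)_{-\alpha}=\mathbb A\otimes_{\F}E$ with $E=U^-_{\mathbb A}(\g)_{-\alpha}\cap\mathcal L(\infty)\cap\mathcal L(\infty)^-$, and $D:=G_\infty(b)-\overline{G_\infty(b)}$ already lies in $E$ with \emph{constant} $\F$-coefficients in any $\F$-basis of $E$; so $Q=D/(q-q^{-1})$ lies in $\mathbb A\otimes_{\F}E$ if and only if those constants vanish, i.e.\ if and only if $D=0$. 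In other words, ``$Q\in U^-_{\mathbb A}(\g)$'' is not a step toward the conclusion but is equivalent to it. The paper's one-line proof makes exactly the same unargued assertion, so you are in good company; just be aware that neither ``freeness over bar-invariants'' nor the ``$b-b$'' computation supplies the missing justification.
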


\vskip 3mm

Let $i\in I^{\text{iso}}$, $\lambda \in P^{+}$ and $\alpha \in Q_{+}$.
For each partition  $\mathbf {c}=(1^{l_1}2^{l_2}\cdots k^{l_k}\cdots)$,
we define
\begin{equation*}
\begin{aligned}
& ({\mathtt b}_{i, \mathbf{c}} * U_{q}^{-}(\g))^{\mathbb A}_{-\alpha}
:= \sum_{k \ge 1} ({\mathtt b}_{i,k}^{l_k} U_{q}^{-}(\g))^{\mathbb A}_{-\alpha}
= \sum_{k \ge 1} {\mathtt b}_{i,k}^{l_k} (U_{\mathbb A}^{-}(\g)_{-\alpha + k l_{k} \alpha_{i}}), \\
& ({\mathtt b}_{i, \mathbf{c}} * V(\lambda))^{\mathbb A}_{\lambda - \alpha} :
= \sum_{k \ge 1} ({\mathtt b}_{i, k}^{l_k} V(\lambda))^{\mathbb A}_{\lambda - \alpha}
=\sum_{k \ge 1} {\mathtt b}_{i, k}^{l_k}(V(\lambda)^{\mathbb A}_{\lambda - \alpha + k l_k \alpha_{i}}).
\end{aligned}
\end{equation*}

Here $(\mathtt b_{i,k}^{l_k}U^-_q(\g))^{\mathbb A}_{-\alpha}
= ({\mathtt b}_{i,k}^{l_k} V(\lambda))^{\mathbb A}_{\lambda - \alpha} = 0   $ if $l_k=0$.

\vskip 3mm

\begin{proposition}\label{M}{\rm
Let $\alpha\in Q_+(r)$ and $\lambda\in P^+$.
\begin{itemize}
\item [(i)] For $i\in I^{\text{re}}$ and $n\geq 1$, we have
$$(\mathtt b_i^nV(\lambda))^{\mathbb A}_{\lambda-\alpha}\cap \mathcal L(\lambda)\cap \mathcal L(\lambda)^-\xrightarrow{\sim}\frac{(\mathtt b_i^nV(\lambda))^{\mathbb A}_{\lambda-\alpha}\cap \mathcal L(\lambda)}{(\mathtt b_i^nV(\lambda))^{\mathbb A}_{\lambda-\alpha}\cap q\mathcal L(\lambda)}\cong \bigoplus_{b\in \mathcal B(\lambda)_{\lambda-\alpha}\cap \widetilde f_i^n\mathcal B(\lambda)}\F b.$$
\item [(ii)] For $i\in I^{\text{im}}\backslash I^{\text{iso}}$ and any composition $\mathbf c$ with $|\mathbf c|\neq 0$, we have
$$(\mathtt b_{i,\mathbf c}V(\lambda))^{\mathbb A}_{\lambda-\alpha}\cap \mathcal L(\lambda)\cap \mathcal L(\lambda)^-\xrightarrow{\sim}\frac{(\mathtt b_{i,\mathbf c}V(\lambda))^{\mathbb A}_{\lambda-\alpha}\cap \mathcal L(\lambda)}{(\mathtt b_{i,\mathbf c}V(\lambda))^{\mathbb A}_{\lambda-\alpha}\cap q\mathcal L(\lambda)}\cong \bigoplus_{b\in \mathcal B(\lambda)_{\lambda-\alpha}\cap \widetilde f_{i,\mathbf c}\mathcal B(\lambda)}\F b.$$
\item [(iii)] For $i\in I^{\text{iso}}$ and any partition $\mathbf c=1^{l_1}2^{l_2}\cdots k^{l_k}\cdots$, we have
$$(\mathtt b_{i,\mathbf c}*V(\lambda))^{\mathbb A}_{\lambda-\alpha}\cap \mathcal L(\lambda)\cap \mathcal L(\lambda)^-\xrightarrow{\sim}\frac{(\mathtt b_{i,\mathbf c}*V(\lambda))^{\mathbb A}_{\lambda-\alpha}\cap \mathcal L(\lambda)}{(\mathtt b_{i,\mathbf c}*V(\lambda))^{\mathbb A}_{\lambda-\alpha}\cap q\mathcal L(\lambda)}\cong \bigoplus_{b\in B(\lambda)_{\lambda-\alpha}\cap(\widetilde f_{i,\mathbf c}*\mathcal B(\lambda))}\F b,$$
where $\widetilde f_{i,\mathbf c}*\mathcal B(\lambda):=\bigcup_{k\geq 1}  {\widetilde f}_{i,k}^{l_k}\mathcal B(\lambda)$.
\end{itemize}
\begin{proof}
Our assertion (i) has been proved in \cite[Proposition 7.4.1]{Kas91}.

\vskip 3mm

Assume that
$i\in I^{\text{im}}\backslash I^{\text{iso}}$ and let $\mathbf c\in {\mathscr C}_i$ with $|\mathbf c|=n>0$.
Recall that $(\mathtt b_{i,\mathbf c}V(\lambda))^{\mathbb A}_{\lambda-\alpha}=\mathtt b_{i,\mathbf c}(V(\lambda)^{\mathbb A}_{\lambda-\alpha+n\alpha_i})$.
If $\langle h_{i}, \lambda-\alpha+n\alpha_i \rangle=0$, then $(\mathtt b_{i,\mathbf c}V(\lambda))^{\mathbb A}_{\lambda-\alpha}=0$, and hence our assertion is trivial.
Thus we may assume that $\langle h_{i}, \lambda-\alpha+n\alpha_i \rangle >0$.
 In this case,
 for any $b\in \mathcal B(\lambda)_{\lambda-\alpha+n\alpha_i}$, we have $\widetilde f_{i,\mathbf c}b\neq 0$.

By B($r-1$), we have $V(\lambda)^{\mathbb A}_{\lambda-\alpha+n\alpha_i}=\bigoplus_{b\in\mathcal B(\lambda)_{\lambda-\alpha+n\alpha_i}}\mathbb AG_\lambda(b)$. Hence
$$(\mathtt b_{i,\mathbf c}V(\lambda))^{\mathbb A}_{\lambda-\alpha}=\sum_{b\in \mathcal B(\lambda)_{\lambda-\alpha+n\alpha_i}}\mathbb A\mathtt b_{i,\mathbf c}G_\lambda(b).$$
Let $F=\sum_{b\in \mathcal B(\lambda)_{\lambda-\alpha+n\alpha_i}}\F \mathtt b_{i,\mathbf c}G_\lambda(b)$. We first show that $F$ is a direct sum. Assume that
$$\sum_{b\in \mathcal B(\lambda)_{\lambda-\alpha+n\alpha_i}}\beta_b \mathtt b_{i,\mathbf c}G_\lambda(b)=0\quad \text{for some} \ \beta_b\in \F.$$
Since $\widetilde f_{i,\mathbf c}G_\lambda(b)=\mathtt b_{i,\mathbf c}G_\lambda(b)$ and $G_\lambda(b)\equiv b \ (\text{mod}\ q\mathcal L(\lambda))$ for any $b\in \mathcal B(\lambda)_{\lambda-\alpha+n\alpha_i}$, we obtain
$$\sum_{b\in \mathcal B(\lambda)_{\lambda-\alpha+n\alpha_i}}\beta_b \widetilde f_{i,\mathbf c}b=0{\color{red}.}$$
By applying $\widetilde e_{i,\widetilde{\mathbf c}}$,  we get $\sum_{b\in \mathcal B(\lambda)_{\lambda-\alpha+n\alpha_i}}\beta_b b=0$,  which implies $\beta_b=0$ for all $b\in \mathcal B(\lambda)_{\lambda-\alpha+n\alpha_i}$.

\vskip 3mm

Let $N=0$, $M=(\mathtt b_{i,\mathbf c}V(\lambda))^{\mathbb A}_{\lambda-\alpha}$, $L_0=\mathcal L(\lambda)_{\lambda-\alpha}$ and $L_\infty=\mathcal L(\lambda)_{\lambda-\alpha}^-$. Set $\varphi:F\rightarrow M\cap L_0\cap L_\infty$ be the $\F$-linear map given by
 $$\mathtt b_{i,\mathbf c}G_\lambda(b) \longmapsto \mathtt b_{i,\mathbf c}G_\lambda(b)=\widetilde f_{i,\mathbf c} G_\lambda(b).$$

 Then, it is easy to check $F,N,M,L_0,L_\infty$ and $\varphi$ satisfy the conditions in Lemma \ref{K2}, and hence we get
$$M\cap L_0\cap L_\infty\xrightarrow{\sim}M\cap L_0/M\cap qL_0\cong \bigoplus_{b\in \mathcal B(\lambda)_{\lambda-\alpha+n\alpha_i}}\F\widetilde f_{i,\mathbf c}b=\bigoplus_{b\in \mathcal B(\lambda)_{\lambda-\alpha}\cap \widetilde f_{i,\mathbf c}\mathcal B(\lambda)}\F b$$
as desired.

\vskip 2mm

Now, we shall prove (iii). Let $i\in I^{\text{iso}}$.  If $l_k$  is sufficient large, then $(\mathtt b_{ik}^{l_k}U^-_q(\g))^{\mathbb A}_{-\alpha}=0$. Hence we can use descending induction on $N = \sum_{k\geq 1}l_k$.
 Without loss of generality, we may assume that $l_1\neq 0$ and $\langle h_{i}, \lambda-\alpha \rangle >0$.
Then, by A($r-1$) and B($r-1$), we  have
$$(\mathtt b_{i,\mathbf c}*V(\lambda))^{\mathbb A}_{\lambda-\alpha}= \mathtt b_{i,1}^{l_1}V(\lambda)^{\mathbb A}_{\lambda-\alpha+l_1\alpha_i}+\sum_{k\geq 2}(\mathtt b_{i,k}^{l_k}V(\lambda))^{\mathbb A}_{\lambda-\alpha}$$
and
$$V(\lambda)^{\mathbb A}_{\lambda-\alpha+l_1\alpha_i}=\bigoplus_{b\in \mathcal B(\lambda)_{\lambda-\alpha+l_1\alpha_i}}\mathbb AG_\lambda(b)=\bigoplus_{\substack{b\in \mathcal B(\infty)_{-\alpha+l_1\alpha_i}\\ \overline \pi_\lambda (b)\neq 0}}\mathbb AG_\infty(b)v_\lambda.$$

Let $b\in \mathcal B(\infty)_{-\alpha+l_1\alpha_i}$ with $\widetilde e_{i1}b\neq 0$. Then $b\in \widetilde f_{i1}\mathcal B(\infty)\cap\mathcal B(\infty)_{-\alpha+l_1\alpha_i}$, which implies $G_\infty(b)\in \mathtt b_{i1}U^-_q(\g)\cap U^-_{\mathbb A}(\g)$ by C($r-1$). Hence $\mathtt b_{i1}^{l_1}G_\infty(b)v_\lambda\in (\mathtt b_{i1}^{l_1+1}V(\lambda))^{\mathbb A}_{\lambda-\alpha}$. For $k\neq 1$ with $l_k\neq 0$, if $b\in \widetilde f_{ik}^{l_k}\mathcal B(\infty)$, then $G_\infty(b)\in \mathtt b_{ik}^{l_k}U^-_q(\g)\cap U^-_{\mathbb A}(\g)$ by C($r-1$). Therefore $\mathtt b_{i,1}^{l_1}G_\infty(b)v_\lambda\in(\mathtt b_{i,k}^{l_k}V(\lambda))^{\mathbb A}_{\lambda-\alpha}$.  Hence we have
$$(\mathtt b_{i,\mathbf c}*V(\lambda))^{\mathbb A}_{\lambda-\alpha}=\sum_{b\in S}\mathbb A\mathtt b_{i,1}^{l_1}G_\infty(b)v_\lambda+(\mathtt b_{i,\mathbf c\cup \{1\}}*V(\lambda))^{\mathbb A}_{\lambda-\alpha},$$
where
\begin{equation}\label{S}
\begin{aligned}
& S=\{b\in\mathcal B(\infty)_{-\alpha+l_1\alpha_i}\mid \overline \pi_\lambda (b)\neq 0,\ \widetilde e_{i,1}b=0,\  b\notin \bigcup_{k\geq 2}\widetilde f_{i,k}^{l_k}\mathcal B(\infty)\} \\
& \phantom{S}\xrightarrow{\substack{\overline \pi_\lambda\\ \sim}} \{b\in\mathcal B(\lambda)_{\lambda-\alpha+l_1\alpha_i}\mid \widetilde e_{i,1}b=0,\  b\notin \bigcup_{k\geq 2}\widetilde f_{i,k}^{l_k}\mathcal B(\lambda)\}\\
& \phantom{S}\xrightarrow{\substack{\widetilde f_{i,1}^{l_1}\\ \sim}} \mathcal B(\lambda)_{\lambda-\alpha}\cap \left(\widetilde f_{i,1}^{l_1}\mathcal B(\lambda)\backslash (\widetilde f_{i,\mathbf c\cup \{1\}}*\mathcal B(\lambda))\right).
\end{aligned}
\end{equation}
The last isomorphism follows from the fact that $\widetilde f_{il}\widetilde e_{il'}=\widetilde e_{il'}\widetilde f_{il}$ and $\widetilde f_{il}\widetilde f_{il'}=\widetilde f_{il'}\widetilde f_{il}$ for any $l,l'\geq 1$ with $l\neq l'$.

Let $V=V(\lambda)_{\lambda-\alpha}$, $M=(\mathtt b_{i,\mathbf c}*V(\lambda))^{\mathbb A}_{\lambda-\alpha}$, $N=(\mathtt b_{i,\mathbf c\cup\{1\}}*V(\lambda))^{\mathbb A}_{\lambda-\alpha}$, $L_0=\mathcal L(\lambda)_{\lambda-\alpha}$, $L_\infty=\mathcal L(\lambda)^-_{\lambda-\alpha}$ and
$$F=\sum_{b\in S}\F \mathtt b_{i1}^{l_1}G_\infty(b)v_\lambda.$$
For $b\in S$, we have $b=G_\infty(b)+q\mathcal L(\infty)$. Assume that $G_\infty(b)$ has the decomposition
$$G_\infty(b)=\sum_{\mathbf c\in \mathscr C_i}\mathtt b_{i,\mathbf c}u_{\mathbf c}\in U^-_{\mathbb A}(\g)_{-\alpha+l_1\alpha_i}\cap \mathcal L(\infty)\cap \mathcal L(\infty)^-.$$
Then we have $$\widetilde f_{i1}\widetilde e_{i1}G_\infty(b)=G_\infty(b)-\sum_{\mathbf c\in \mathscr C_i; 1\notin\mathbf c}\mathtt b_{i,\mathbf c}u_{\mathbf c}=G_\infty(b)-u_b\in q\mathcal L(\infty).$$
Hence we obtain
\begin{itemize}
\item [(i)] $\mathtt b_{i1}^{l_1}G_\infty(b)\equiv \mathtt b_{i1}^{l_1}u_b\ \text{mod}\ (\mathtt b_{i1}^{l_1+1}U^-_q(\g))^{\mathbb A}$, which implies \begin{equation}\label{I1}\mathtt b_{i1}^{l_1}G_\infty(b)v_\lambda\in M\cap (N+L_0)\cap (N+L_\infty).\end{equation}
\item [(ii)] $\widetilde f_{i1}^{l_1}b=\beta_b\mathtt b_{i1}^{l_1}u_b+q\mathcal L(\infty)$ for some $\beta_b\in \F^*$, which implies \begin{equation}\label{I2}\overline{\pi}_\lambda(\widetilde f_{i1}^{l_1}b)=\beta_b\mathtt b_{i1}^{l_1}u_bv_\lambda+q\mathcal L(\lambda).\end{equation}
\end{itemize}

 Set
 $$H:= (N+L_0) /(N+qL_0) \cong \frac{L_0/qL_0}{N \cap L_0 / N\cap qL_0}.$$
 By  induction  hypothesis, we have
 $H\cong \bigoplus_{b\in\mathcal B(\lambda)_{\lambda-\alpha}\backslash (\widetilde f_{i,\mathbf c\cup \{1\}}*\mathcal B(\lambda))}\F b.$ Hence (\ref{S}), (\ref{I1}) and (\ref{I2}) imply that the following canonical  maps are injective:
 \begin{equation}
 \begin{aligned}
 & \varphi_0:F\xrightarrow{\varphi} M\cap(L_0+N)\cap(L_\infty+N)\rightarrow \frac{N+L_0 }{N+qL_0}\xrightarrow{\sim} H \\
 & b_{i1}^{l_1}G_\infty(b)v_\lambda\mapsto\mathtt b_{i1}^{l_1}G_\infty(b)v_\lambda\mapsto
 \mathtt b_{i1}^{l_1}u_bv_\lambda+(N+qL_0) \mapsto \frac{1}{\beta_b}\overline{\pi}_\lambda(\widetilde f_{i1}^{l_1}b)
 \end{aligned}
\end{equation}
 By taking $-$, the following canonical map is injective
 $$\varphi_\infty:F\xrightarrow{\varphi} M\cap(L_0+N)\cap(L_\infty+N)\rightarrow \frac{N+L_\infty }{N+q^{-1}L_\infty}.$$
Note that $M=\mathbb A\varphi(F)+N$.  Hence Lemma \ref{K2} yields
$$M\cap L_0\cap L_\infty\xrightarrow{\sim}  (M\cap L_0) / (M\cap qL_0)  \cong F\oplus  (N\cap L_0/N\cap qL_0),$$
where
$$\begin{aligned}& F\oplus (N\cap L_0/ N\cap qL_0)\cong\bigoplus_{S\sqcup \left(\mathcal B(\lambda)_{\lambda-\alpha}\cap \widetilde f_{i,\mathbf c\cup \{1\}}*\mathcal B(\lambda)\right)}\F b\\
& \phantom{F\oplus (N\cap L_\infty}=\bigoplus_{\mathcal B(\lambda)_{\lambda-\alpha}\cap \left(\widetilde f_{i,\mathbf c}*\mathcal B(\lambda)\right)}\F b.
\end{aligned}$$
This completes the proof.
\end{proof}
}\end{proposition}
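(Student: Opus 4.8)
The plan is to prove all three parts by the same device: choosing a suitable pair $(F,N)$ and feeding the data into Kashiwara's lattice lemmas (Lemma~\ref{K1}, Lemma~\ref{K2}), while the inductive hypotheses A($r-1$), B($r-1$), C($r-1$) supply $\mathbb A$-bases of the relevant weight spaces of $U^-_{\mathbb A}(\g)$ and $V(\lambda)^{\mathbb A}$ in terms of the global basis vectors $G_\infty(b)$, $G_\lambda(b)$. Part (i), for $i\in I^{\text{re}}$, is exactly \cite[Proposition 7.4.1]{Kas91}, so the work is in (ii) and (iii). In every case I would set $L_0=\mathcal L(\lambda)_{\lambda-\alpha}$, $L_\infty=\mathcal L(\lambda)^-_{\lambda-\alpha}$, take $M$ to be the module named in the statement, and note that the first isomorphism of the proposition will be the conclusion of Lemma~\ref{K2}; thus the whole task is to produce a finite-dimensional $\F$-space $F$ and a submodule $N\subseteq M$ meeting its hypotheses.

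For part (ii), with $i\in I^{\text{im}}\setminus I^{\text{iso}}$ and $\mathbf c\in\mathscr C_i$ of size $n>0$, I would first observe $(\mathtt b_{i,\mathbf c}V(\lambda))^{\mathbb A}_{\lambda-\alpha}=\mathtt b_{i,\mathbf c}\bigl(V(\lambda)^{\mathbb A}_{\lambda-\alpha+n\alpha_i}\bigr)$, which vanishes unless $\langle h_i,\lambda-\alpha+n\alpha_i\rangle>0$; in that range $\widetilde f_{i,\mathbf c}$ annihilates no element of $\mathcal B(\lambda)_{\lambda-\alpha+n\alpha_i}$. Apply B($r-1$) to write $V(\lambda)^{\mathbb A}_{\lambda-\alpha+n\alpha_i}=\bigoplus_b\mathbb A\,G_\lambda(b)$, multiply by $\mathtt b_{i,\mathbf c}$, and put $F=\sum_b\F\,\mathtt b_{i,\mathbf c}G_\lambda(b)$; directness of $F$ follows by reducing a dependence mod $q\mathcal L(\lambda)$ --- using $\widetilde f_{i,\mathbf c}G_\lambda(b)=\mathtt b_{i,\mathbf c}G_\lambda(b)$ and $G_\lambda(b)\equiv b$ --- to a dependence among the $\widetilde f_{i,\mathbf c}b$, which is broken by applying the reversed string $\widetilde e_{i,\widetilde{\mathbf c}}$ of Kashiwara operators. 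With $N=0$ and $\varphi\colon F\to M\cap L_0\cap L_\infty$ the obvious inclusion, the hypotheses of Lemma~\ref{K2} hold at once and give the isomorphism onto $\bigoplus_{b\in\mathcal B(\lambda)_{\lambda-\alpha}\cap\widetilde f_{i,\mathbf c}\mathcal B(\lambda)}\F b$.

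For part (iii), with $i\in I^{\text{iso}}$ and $\mathbf c=1^{l_1}2^{l_2}\cdots$, I would run a \emph{descending} induction on $N=\sum_{k\ge1}l_k$ (vacuous once some $l_k$ is large enough that $(\mathtt b_{ik}^{l_k}U^-_q(\g))^{\mathbb A}_{-\alpha}=0$), with the ``smaller'' submodule in Lemma~\ref{K2} taken to be $(\mathtt b_{i,\mathbf c\cup\{1\}}*V(\lambda))^{\mathbb A}_{\lambda-\alpha}$. Reducing to $l_1\ne0$ and $\langle h_i,\lambda-\alpha\rangle>0$, use A($r-1$), B($r-1$) and Proposition~\ref{P4} to write $(\mathtt b_{i,\mathbf c}*V(\lambda))^{\mathbb A}_{\lambda-\alpha}=\mathtt b_{i1}^{l_1}V(\lambda)^{\mathbb A}_{\lambda-\alpha+l_1\alpha_i}+\sum_{k\ge2}(\mathtt b_{ik}^{l_k}V(\lambda))^{\mathbb A}_{\lambda-\alpha}$ and to replace $V(\lambda)^{\mathbb A}_{\lambda-\alpha+l_1\alpha_i}$ by $\bigoplus_b\mathbb A\,G_\infty(b)v_\lambda$ over $b\in\mathcal B(\infty)_{-\alpha+l_1\alpha_i}$ with $\overline\pi_\lambda(b)\ne0$. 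The decisive input is C($r-1$): if $\widetilde e_{i1}b\ne0$, or $b\in\widetilde f_{ik}^{l_k}\mathcal B(\infty)$ for some $k\ge2$, then $G_\infty(b)\in\mathtt b_{i1}U^-_q(\g)$, resp. $\mathtt b_{ik}^{l_k}U^-_q(\g)$, so $\mathtt b_{i1}^{l_1}G_\infty(b)v_\lambda$ already lies in $(\mathtt b_{i,\mathbf c\cup\{1\}}*V(\lambda))^{\mathbb A}_{\lambda-\alpha}$ and can be absorbed into $N$; the surviving $b$ form a set $S$ that $\overline\pi_\lambda$, and then $\widetilde f_{i1}^{l_1}$, carry bijectively onto $\mathcal B(\lambda)_{\lambda-\alpha}\cap\bigl(\widetilde f_{i1}^{l_1}\mathcal B(\lambda)\setminus(\widetilde f_{i,\mathbf c\cup\{1\}}*\mathcal B(\lambda))\bigr)$, using $\widetilde f_{il}\widetilde e_{il'}=\widetilde e_{il'}\widetilde f_{il}$ and $\widetilde f_{il}\widetilde f_{il'}=\widetilde f_{il'}\widetilde f_{il}$ for $l\ne l'$. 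I then set $F=\sum_{b\in S}\F\,\mathtt b_{i1}^{l_1}G_\infty(b)v_\lambda$; decomposing $G_\infty(b)=\sum_{\mathbf d\in\mathscr C_i}\mathtt b_{i,\mathbf d}u_{\mathbf d}$ gives $\widetilde f_{i1}\widetilde e_{i1}G_\infty(b)=G_\infty(b)-u_b\in q\mathcal L(\infty)$, which simultaneously places $\mathtt b_{i1}^{l_1}G_\infty(b)v_\lambda$ in $M\cap(L_0+N)\cap(L_\infty+N)$ and yields $\overline\pi_\lambda(\widetilde f_{i1}^{l_1}b)\equiv\beta_b\,\mathtt b_{i1}^{l_1}u_bv_\lambda$ mod $q\mathcal L(\lambda)$ for a unit $\beta_b$, whence the injectivity of $\varphi_0$ --- and, after applying $^-$, of $\varphi_\infty$ --- while $M=\mathbb A\varphi(F)+N$ holds by construction. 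Lemma~\ref{K2} then gives $M\cap L_0/M\cap qL_0\cong F\oplus(N\cap L_0/N\cap qL_0)$, and the descending-induction hypothesis identifies the right-hand side with $\bigoplus_{b\in\mathcal B(\lambda)_{\lambda-\alpha}\cap(\widetilde f_{i,\mathbf c}*\mathcal B(\lambda))}\F b$.

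The hard part is part (iii): arranging the descending induction so that \emph{every} global basis vector $G_\infty(b)$ with $b$ in a ``larger'' $\widetilde f$-image is swept into $N$ --- this is exactly why C($r-1$) must be carried along in the simultaneous induction --- and then verifying that the residual set $S$ is carried \emph{precisely} onto the intended crystal subset by $\overline\pi_\lambda\circ\widetilde f_{i1}^{l_1}$. That check rests on the commutativity of the Kashiwara operators at distinct isotropic levels and, more delicately, on the fact (Remark~\ref{lem:prop3.3}) that in the isotropic case the structure constants $\tau_{il}$ are units in $\mathbb A_0$, so that the filtration by powers of $\mathtt b_{i1}$ and the one by $\widetilde f_{i1}$ coincide up to units.
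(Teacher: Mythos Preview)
Your proposal is correct and follows essentially the same approach as the paper's own proof: part (i) is delegated to \cite[Proposition~7.4.1]{Kas91}; part (ii) uses B($r-1$) to span $M$ by the $\mathtt b_{i,\mathbf c}G_\lambda(b)$, takes $N=0$, and feeds $F$ into Lemma~\ref{K2} after checking directness via $\widetilde e_{i,\widetilde{\mathbf c}}$; and part (iii) runs the same descending induction on $\sum_k l_k$ with $N=(\mathtt b_{i,\mathbf c\cup\{1\}}*V(\lambda))^{\mathbb A}_{\lambda-\alpha}$, invoking C($r-1$) to absorb the unwanted $G_\infty(b)v_\lambda$ into $N$, and identifying the residual set $S$ with the correct crystal subset via the commutation of the isotropic Kashiwara operators. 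Your closing remark on the role of the unit property of $\tau_{il}$ (Remark~\ref{lem:prop3.3}) makes explicit a point the paper leaves implicit, but the argument is otherwise the same.
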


\vskip 3mm

\begin{corollary}\label{M1}{\rm
Let $\alpha\in Q_+(r)$.
\begin{itemize}
\item [(i)] For $i\in I^{\text{re}}$ and $n\geq 1$, we have
$$(\mathtt b_i^nU^-_q(\g))^{\mathbb A}_{-\alpha}\cap \mathcal L(\infty)\cap \mathcal L(\infty)^-\xrightarrow{\sim}\frac{(\mathtt b_i^nU^-_q(\g))^{\mathbb A}_{-\alpha}\cap \mathcal L(\infty)}{(\mathtt b_i^nU^-_q(\g))^{\mathbb A}_{-\alpha}\cap q\mathcal L(\infty)}\cong \bigoplus_{b\in \mathcal B(\infty)_{-\alpha}\cap \widetilde f_i^n\mathcal B(\infty)}\F b.$$
\item [(ii)] For $i\in I^{\text{im}}\backslash I^{\text{iso}}$ and $|\mathbf c|\neq 0$, we have
$$(\mathtt b_{i,\mathbf c}U^-_q(\g))^{\mathbb A}_{-\alpha}\cap \mathcal L(\infty)\cap \mathcal L(\infty)^-\xrightarrow{\sim}\frac{(\mathtt b_{i,\mathbf c}U^-_q(\g))^{\mathbb A}_{-\alpha}\cap \mathcal L(\infty)}{(\mathtt b_{i,\mathbf c}U^-_q(\g))^{\mathbb A}_{-\alpha}\cap q\mathcal L(\infty)}\cong \bigoplus_{b\in \mathcal B(\infty)_{-\alpha}\cap \widetilde f_{i,\mathbf c}\mathcal B(\infty)}\F b.$$
\item [(iii)]  For $i\in I^{\text{iso}}$ and any partition  $\mathbf c=(1^{l_1}2^{l_2}\cdots k^{l_k}\cdots)$, we have
$$(\mathtt b_{i,\mathbf c}*U^-_q(\g))^{\mathbb A}_{-\alpha}\cap \mathcal L(\infty)\cap \mathcal L(\infty)^-\xrightarrow{\sim}\frac{(\mathtt b_{i,\mathbf c}*U^-_q(\g))^{\mathbb A}_{-\alpha}\cap \mathcal L(\infty)}{(\mathtt b_{i,\mathbf c}*U^-_q(\g))^{\mathbb A}_{-\alpha}\cap q\mathcal L(\infty)}\cong \bigoplus_{b\in \mathcal B(\infty)_{-\alpha}\cap(\widetilde f_{i,\mathbf c}*\mathcal B(\infty))}\F b,$$
where $\widetilde f_{i,\mathbf c}*\mathcal B(\infty)=\bigcup_{k\geq 1}\widetilde f_{ik}^{l_k}\mathcal B(\infty)$.
\end{itemize}
\begin{proof}
We shall prove (iii) only.  The proof of  (i) and (ii) are similar. For $\lambda\gg 0$, we have
$$U^-_q(\g)_{-\alpha}\xrightarrow{\sim}V(\lambda)_{\lambda-\alpha},\quad (\mathtt b_{i,\mathbf c}*U^-_q(\g))^{\mathbb A}_{-\alpha}\xrightarrow{\sim}(\mathtt b_{i,\mathbf c}*V(\lambda))^{\mathbb A}_{\lambda-\alpha},$$
$$\mathcal L(\infty)_{-\alpha}\xrightarrow{\sim}\mathcal L(\lambda)_{\lambda-\alpha},\quad \mathcal L(\infty)^-_{-\alpha}\xrightarrow{\sim}\mathcal L(\lambda)^-_{\lambda-\alpha},$$
and
$$\bigoplus_{b\in \mathcal B(\infty)_{-\alpha}\cap(\widetilde f_{i,\mathbf c}*\mathcal B(\infty))}\F b\xrightarrow{\sim} \bigoplus_{b\in \mathcal B(\lambda)_{\lambda-\alpha}\cap(\widetilde f_{i,\mathbf c}*\mathcal B(\lambda))}\F b.$$
Hence our assertion follows immediately.
\end{proof}
}\end{corollary}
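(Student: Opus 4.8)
The plan is to deduce all three parts from the corresponding assertions of Proposition \ref{M} by the same stabilization argument already used there for part (iii). Fix $\alpha\in Q_+(r)$ and choose $\lambda\in P^+$ sufficiently dominant so that the map $\pi_\lambda:U^-_q(\g)\to V(\lambda)$, $P\mapsto Pv_\lambda$, restricts to an isomorphism in every degree $-\beta$ with $\beta\le\alpha$. By the presentation \eqref{V}, the kernel of $\pi_\lambda$ in degree $-\beta$ is the degree-$(-\beta)$ component of $\sum_{i\in I^{\text{re}}}U^-_q(\g)f_i^{\lambda(h_i)+1}+\sum_{i\in I^{\text{im}},\,\lambda(h_i)=0}U^-_q(\g)f_{il}$; requiring $\lambda(h_i)\ge|\alpha|$ for every real $i$ occurring in $\alpha$ and $\lambda(h_i)>0$ for every imaginary $i$ occurring in $\alpha$ makes both summands vanish in all such degrees, so $\pi_\lambda$ is the desired isomorphism. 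This in particular covers the shifted degrees $-\alpha+kl_k\alpha_i$ and $-\alpha+n\alpha_i$ that appear in the statement.

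Next I would transport all the relevant structures along $\pi_\lambda$. Since $\pi_\lambda$ is a homomorphism of $U^-_q(\g)$-modules, intertwines the bar-involutions (because $\overline{Pv_\lambda}=\overline P\,v_\lambda$ by definition), sends $U^-_{\mathbb A}(\g)$ onto $V(\lambda)^{\mathbb A}$ and $\mathcal L(\infty)$ onto $\mathcal L(\lambda)$ by Proposition \ref{P4}(i), and is injective in the degrees in question, it induces compatible isomorphisms
\[
U^-_q(\g)_{-\alpha}\xrightarrow{\sim}V(\lambda)_{\lambda-\alpha},\qquad
\mathcal L(\infty)_{-\alpha}\xrightarrow{\sim}\mathcal L(\lambda)_{\lambda-\alpha},\qquad
\mathcal L(\infty)^-_{-\alpha}\xrightarrow{\sim}\mathcal L(\lambda)^-_{\lambda-\alpha},
\]
together with $(\mathtt b_{i,\mathbf c}*U^-_q(\g))^{\mathbb A}_{-\alpha}\xrightarrow{\sim}(\mathtt b_{i,\mathbf c}*V(\lambda))^{\mathbb A}_{\lambda-\alpha}$ and the analogous statements with $\mathtt b_i^n$ or $\mathtt b_{i,\mathbf c}$ in place of $\mathtt b_{i,\mathbf c}*$. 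Moreover, by Proposition \ref{P4}(ii)--(iv), for $\lambda\gg 0$ the map $\overline\pi_\lambda$ restricts to a degree-preserving bijection $\mathcal B(\infty)_{-\alpha}\xrightarrow{\sim}\mathcal B(\lambda)_{\lambda-\alpha}$ commuting with all the operators $\widetilde e_{il},\widetilde f_{il}$, so it carries $\widetilde f_i^n\mathcal B(\infty)$, $\widetilde f_{i,\mathbf c}\mathcal B(\infty)$ and $\widetilde f_{i,\mathbf c}*\mathcal B(\infty)=\bigcup_{k\ge 1}\widetilde f_{ik}^{l_k}\mathcal B(\infty)$ onto the corresponding subsets of $\mathcal B(\lambda)$. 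Hence the right-hand sides of the three displays coincide with those appearing in Proposition \ref{M}.

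Finally, applying the appropriate clause of Proposition \ref{M}(i)--(iii) and reading the isomorphism it furnishes back through $\pi_\lambda^{-1}$ yields each of (i), (ii), (iii); this is exactly the argument written out for Proposition \ref{M}(iii), with (i) and (ii) verbatim the same up to the choice of clause. The one point that genuinely needs care — and the place I would argue most carefully — is the choice of $\lambda$: one must confirm that a single $\lambda\in P^+$ can be taken large enough to make $\pi_\lambda$ an isomorphism simultaneously in all the finitely many degrees $-\alpha$, $-\alpha+kl_k\alpha_i$, $-\alpha+n\alpha_i$ that occur, and that $\lambda(h_i)>0$ for the finitely many imaginary indices $i$ involved so that no generator $f_{il}$ is annihilated. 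Once this is fixed, everything else is a routine transport of structure along $\pi_\lambda$.
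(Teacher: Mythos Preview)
Your argument is correct and is essentially the same as the paper's: choose $\lambda\gg 0$ so that $\pi_\lambda$ is an isomorphism in all relevant degrees, transport the $\mathbb A$-form, the lattices $\mathcal L(\infty)$ and $\mathcal L(\infty)^-$, and the crystal subsets along $\pi_\lambda$, then read off each clause from the corresponding clause of Proposition~\ref{M}. Your write-up is in fact more careful than the paper's, which simply asserts the displayed isomorphisms for $\lambda\gg 0$ without spelling out the justification via \eqref{V} and Proposition~\ref{P4}.
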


\vskip 3mm

For $\alpha\in Q_+(r)$ and $(i,l)\in I^{\infty}$, let us denote by $G_{il}$ the inverse of the isomorphism
$$(\mathtt b_{il}U^-_q(\g))^{\mathbb A}_{-\alpha}\cap \mathcal L(\infty)\cap \mathcal L(\infty)^-\xrightarrow{\sim} \bigoplus_{b\in \mathcal B(\infty)_{-\alpha}\cap \widetilde f_{il}\mathcal B(\infty)}\F b.$$
Then Corollary \ref{M1} implies $(\mathtt b_{il}^nU^-_q(\g))^{\mathbb A}_{-\alpha}=\bigoplus_{b\in \mathcal B(\infty)_{-\alpha}\cap \widetilde f_{il}^n\mathcal B(\infty)}\mathbb AG_{il}(b)$ for any $n\geq 1$.

\vskip 3mm

\begin{lemma}{\rm
Let $(i,l),(j,s)\in I^{\infty}$, $\alpha\in Q_+(r)$ and $b\in \widetilde f_{il}\mathcal B(\infty)\cap\widetilde f_{js}\mathcal B(\infty)\cap \mathcal B(\infty)_{-\alpha}$. Then we have
$$G_{il}(b)=G_{js}(b).$$
\begin{proof}
 Let us write $b=\widetilde{f}_{\iota_1}\cdots\widetilde{f}_{kh}\cdot 1$, where $(k,h)\in I^{\infty}$. If $k\in I^{\text{re}}$, then our claim was proved in \cite{Kas91}. So we  will  assume that $k\in I^{\text{im}}$. Take $\lambda\in P^+$ with $\langle h_{k} ,\lambda \rangle =0$ and $\langle h_{j}, \lambda \rangle \gg 0$ for all
$j\in I\backslash\{k\}$. Then (\ref{V}) yields
$$V(\lambda)_{\lambda-\alpha}\simeq U^-_q(\g)_{-\alpha} \bigg/\sum_{n\geq 1}U^-_q(\g)_{-\alpha+n\alpha_k}\mathtt b_{kn}.$$
The same argument in \cite[Lemma 7.5.1]{Kas91} shows that
$$Q=G_{il}(b)-G_{js}(b)\in (\sum_{n\geq 1}U^-_q(\g)_{-\alpha+n\alpha_k}\mathtt b_{kn})\cap U^-_{\mathbb A}(\g)_{-\alpha}\cap q\mathcal L(\infty)\cap\mathcal L(\infty)^-.$$
Then Corollary \ref{C1} implies
$$Q^*\in (\sum_{n\geq 1}\mathtt b_{kn}U^-_q(\g)_{-\alpha+n\alpha_k})\cap U^-_{\mathbb A}(\g)_{-\alpha}\cap q\mathcal L(\infty)\cap\mathcal L(\infty)^-.$$

If $k\in I^{\text{im}}\backslash I^{\text{iso}}$, we assume that $Q^*=\mathtt b_{k1}u_1+\cdots+\mathtt b_{ks}u_s$. Since $\overline{Q^*}=\overline Q^*=Q^*$, we have $Q^*=\mathtt b_{k1}\overline u_1+\cdots+\mathtt b_{ks}\overline u_s$. Note that for each $1\leq j\leq s$, $\mathtt b_{kj}u_j=\widetilde f_{kj}\widetilde e_{kj}Q^*\in \mathtt b_{kj}U^-_q(\g)\cap U^-_{\mathbb A}(\g)_{-\alpha}\cap q\mathcal L(\infty)$ and $u_j=\overline u_j=\widetilde e_{kj}Q^*$. Hence
$b_{kj}u_j\in (\mathtt b_{kj}U^-_q(\g))^{\mathbb A}_{-\alpha}\cap q\mathcal L(\infty)\cap \mathcal L(\infty)^-$ and Corollary \ref{M1} (ii) implies $Q^*=0$.

If $k\in I^{\text{iso}}$, since $Q^*\in (\sum_{n\geq 1}\mathtt b_{kn} U^-_q(\g)_{-\alpha+n\alpha_k})\cap U^-_{\mathbb A}(\g)_{-\alpha}$, the decomposition of
$Q^*$ can be expressed as the form $Q^*=\mathtt b_{k1}u_1+\cdots+\mathtt b_{ks}u_s$ with $$u_j=\sum_{\substack{\mathbf c\in \mathscr C_k\ \text{and}  \\ \mathbf c\ \text{contains no} \ j+1,\cdots, s}}\mathtt b_{k,\mathbf c}u_{\mathbf c}.$$
 For every $1\leq j\leq s$, we have $$\mathtt b_{kj}u_{j}=\widetilde f_{kj}\widetilde e_{kj}(Q^*-\sum_{j<p\leq s}b_{kp}u_p)\in (\mathtt b_{kj}U^-_q(\g))^{\mathbb A}_{-\alpha}.$$
Hence $Q^*\in (\mathtt b_{k,(1,\dots, s)}*U^-_q(\g))^{\mathbb A}_{-\alpha}$, and Corollary \ref{M1} (iii) implies $Q^*=0$.
\end{proof}
}\end{lemma}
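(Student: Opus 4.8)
The plan is to put $Q:=G_{il}(b)-G_{js}(b)$ and to show $Q=0$. Both $G_{il}(b)$ and $G_{js}(b)$ lie in $U^-_{\mathbb A}(\g)_{-\alpha}\cap\mathcal L(\infty)\cap\mathcal L(\infty)^-$, are fixed by the bar involution, and reduce to $b$ modulo $q\mathcal L(\infty)$ (being the $G_{\bullet}$-images attached to $b$ in Corollary \ref{M1}). Hence $Q\in U^-_{\mathbb A}(\g)_{-\alpha}\cap q\mathcal L(\infty)\cap\mathcal L(\infty)^-$ and $\overline Q=Q$. Write $b=\widetilde f_{\iota_1}\cdots\widetilde f_{\iota_{m-1}}\widetilde f_{kh}\mathbf 1$ with $\widetilde f_{kh}$ the innermost Kashiwara operator. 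If $k\in I^{\text{re}}$ the statement is Kashiwara's \cite[Lemma 7.5.1]{Kas91}, whose argument carries over verbatim; so I may assume $k\in I^{\text{im}}$.

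Next I would choose $\lambda\in P^+$ with $\langle h_k,\lambda\rangle=0$ and $\langle h_j,\lambda\rangle\gg 0$ for all $j\neq k$. Since $\sum_{n\geq 1}U^-_q(\g)f_{kn}=\sum_{n\geq 1}U^-_q(\g)\mathtt b_{kn}$ as left ideals, and for $\lambda$ large in every direction other than $k$ the remaining defining relations in \eqref{V} do not meet degree $-\alpha$, the presentation \eqref{V} specializes to $V(\lambda)_{\lambda-\alpha}\simeq U^-_q(\g)_{-\alpha}\big/\sum_{n\geq 1}U^-_q(\g)_{-\alpha+n\alpha_k}\mathtt b_{kn}$. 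The crux of the proof is then to run Kashiwara's argument for \cite[Lemma 7.5.1]{Kas91} in this setting and conclude $Qv_\lambda=0$ in $V(\lambda)$; equivalently $Q\in\bigl(\sum_{n\geq 1}U^-_q(\g)_{-\alpha+n\alpha_k}\mathtt b_{kn}\bigr)\cap U^-_{\mathbb A}(\g)_{-\alpha}\cap q\mathcal L(\infty)\cap\mathcal L(\infty)^-$. Roughly, one compares the images $G_{il}(b)v_\lambda$ and $G_{js}(b)v_\lambda$ using Proposition \ref{P4}, Proposition \ref{M}, and the inductive hypotheses A($r-1$)--C($r-1$): both are pinned down as the unique bar-invariant lift of $\overline{\pi}_\lambda(b)$ inside $V(\lambda)^{\mathbb A}_{\lambda-\alpha}\cap\mathcal L(\lambda)\cap\mathcal L(\lambda)^-$, so their difference dies in $V(\lambda)$.

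Having located $Q$ in that ideal, I would apply the anti-involution $*$. By Lemma \ref{L2} and Corollary \ref{C1}, $*$ preserves $(\ ,\ )_L$, $\mathcal L(\infty)$, hence $q\mathcal L(\infty)$; it preserves $U^-_{\mathbb A}(\g)$ because the algebra generators $\mathtt b_i^{(n)}$, $\mathtt b_{il}$ are $*$-fixed; and it commutes with the bar involution on $U^-$. Since $*$ reverses products, $Q^*$ lands in $\bigl(\sum_{n\geq 1}\mathtt b_{kn}U^-_q(\g)_{-\alpha+n\alpha_k}\bigr)\cap U^-_{\mathbb A}(\g)_{-\alpha}\cap q\mathcal L(\infty)\cap\mathcal L(\infty)^-$, with $\overline{Q^*}=Q^*$. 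If $k\in I^{\text{im}}\setminus I^{\text{iso}}$, I group the canonical decomposition \eqref{eq:decomp} of $Q^*$ by first part to get $Q^*=\sum_{j=1}^{s}\mathtt b_{kj}u_j$ with $u_j=\widetilde e_{kj}Q^*$; from $\overline{Q^*}=Q^*$ and uniqueness each $u_j$ is bar-invariant, so $\mathtt b_{kj}u_j=\widetilde f_{kj}\widetilde e_{kj}Q^*$ lies in $(\mathtt b_{kj}U^-_q(\g))^{\mathbb A}_{-\alpha}\cap q\mathcal L(\infty)\cap\mathcal L(\infty)^-$, and Corollary \ref{M1}(ii) forces every $\mathtt b_{kj}u_j=0$, hence $Q^*=0$. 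If $k\in I^{\text{iso}}$, I peel off the $\mathtt b_{kn}$ largest index first to write $Q^*=\sum_{j=1}^{s}\mathtt b_{kj}u_j$ with $u_j$ involving no $\mathtt b_{km}$ for $m>j$; descending induction on $j$ gives $\mathtt b_{kj}u_j=\widetilde f_{kj}\widetilde e_{kj}\bigl(Q^*-\sum_{p>j}\mathtt b_{kp}u_p\bigr)\in(\mathtt b_{kj}U^-_q(\g))^{\mathbb A}_{-\alpha}$, so $Q^*\in(\mathtt b_{k,(1,\dots,s)}*U^-_q(\g))^{\mathbb A}_{-\alpha}\cap q\mathcal L(\infty)\cap\mathcal L(\infty)^-$, which is $0$ by Corollary \ref{M1}(iii). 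In either case $Q=(Q^*)^*=0$.

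The main obstacle is the crux step $Qv_\lambda=0$: transplanting the grand-loop-style bookkeeping of \cite[Lemma 7.5.1]{Kas91} to an imaginary index $k$ requires care, since for $\langle h_k,\lambda\rangle=0$ one has $f_{kn}v_\lambda=0$ for all $n\geq 1$ (so the whole $k$-string at the highest weight collapses and one must track how $\widetilde e_{kn},\widetilde f_{kn}$ interact with $\overline{\pi}_\lambda$), and because of the square roots entering $\widetilde e_{kn},\widetilde f_{kn}$ when $k\in I^{\text{iso}}$; one must also verify that in degree $-\alpha$ the presentation \eqref{V} of $V(\lambda)$ contributes only the relations $\mathtt b_{kn}$. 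Once that is in place, the passage through $*$ and the final appeal to Corollary \ref{M1} are routine.
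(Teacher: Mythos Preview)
Your proposal is correct and follows essentially the same approach as the paper: choose $\lambda$ with $\langle h_k,\lambda\rangle=0$, push $Q$ into the right ideal $\sum_{n\ge1}U^-_q(\g)\mathtt b_{kn}$ via the $V(\lambda)$-argument (which the paper also delegates to \cite[Lemma 7.5.1]{Kas91}), apply $*$, and then kill $Q^*$ using Corollary~\ref{M1}(ii) or (iii). One small clarification on your ``crux'' step: for this choice of $\lambda$ one has $\overline\pi_\lambda(b)=0$ (since the innermost $\widetilde f_{kh}$ annihilates $v_\lambda$), so the ``unique bar-invariant lift'' you invoke is simply $0$, and Proposition~\ref{M} applied separately to $(\mathtt b_{il}V(\lambda))^{\mathbb A}_{\lambda-\alpha}$ and $(\mathtt b_{js}V(\lambda))^{\mathbb A}_{\lambda-\alpha}$ forces $G_{il}(b)v_\lambda=G_{js}(b)v_\lambda=0$ directly.
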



\vskip 3mm

Thus we can define
$$G:\mathcal L(\infty)_{-\alpha}/q\mathcal L(\infty)_{-\alpha}\rightarrow U^-_{\mathbb A}(\g)_{-\alpha}\cap \mathcal L(\infty)\cap \mathcal L(\infty)^-$$
by
$$G(b)\longmapsto G_{il}(b) \ \ \text{for} \  b\in \widetilde f_{il}\mathcal B(\infty)\cap \mathcal B(\infty)_{-\alpha},  (i,l) \in I^{\infty}.$$
Then we have $b=G(b)+q\mathcal L(\infty)$ and
\begin{equation}\label{G(b)}
(\mathtt b_{il}^nU^-_q(\g))^{\mathbb A}_{-\alpha}=\bigoplus_{b\in \mathcal B(\infty)_{-\alpha}\cap \widetilde f_{il}^n\mathcal B(\infty)}\mathbb AG(b)
\end{equation}
for any $n\geq 1$. Since $U^-_{\mathbb A}(\g)_{-\alpha}=\sum_{(i,l)\in I^{\infty}}(\mathtt b_{il}U^-_q(\g))^{\mathbb A}_{-\alpha}$, we obtain $$U^-_{\mathbb A}(\g)_{-\alpha}=\sum_{b\in \mathcal B(\infty)_{-\alpha}}\mathbb A G(b).$$
Let $E=\mathcal L(\infty)_{-\alpha}/q\mathcal L(\infty)_{-\alpha}$ and $M=U^-_{\mathbb A}(\g)_{-\alpha}$.  Then by Lemma \ref{K1} (ii), we deduce that
$$\mathcal L(\infty)_{-\alpha}/q\mathcal L(\infty)_{-\alpha}\xrightarrow{G} U^-_{\mathbb A}(\g)_{-\alpha}\cap \mathcal L(\infty)\cap \mathcal L(\infty)^-\rightarrow \frac{U^-_{\mathbb A}(\g)_{-\alpha}\cap \mathcal L(\infty)}{U^-_{\mathbb A}(\g)_{-\alpha}\cap q\mathcal L(\infty)}$$
is an isomorphism, which proves A($r$). Now C($r$) follows from (\ref{G(b)}). Finally, we shall prove B($r$).

\vskip 3mm

\begin{lemma}{\rm
Let $\alpha\in Q_+(r)$, $b\in \mathcal B(\infty)_{-\alpha}$ and $\lambda\in P^+$. If $\overline{\pi}_\lambda (b)=0$, then $G(b)v_\lambda=0$.
\begin{proof}
Take $(i,l)\in I^{\infty}$ with $\widetilde e_{il}b\neq 0$. Then $G(b)v_\lambda\in (\mathtt b_{il}V(\lambda))^{\mathbb A}_{\lambda-\alpha}\cap q\mathcal L(\lambda)\cap \mathcal L(\lambda)^-$ by Proposition \ref{M}.
\end{proof}
}\end{lemma}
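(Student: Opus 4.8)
The plan is to derive this as a short corollary of Proposition \ref{M}, after applying a single Kashiwara operator to $b$; the substance of the argument has already been absorbed into Proposition \ref{M} and statement C($r$), so here only assembly is needed.

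First I would observe that the hypothesis $\overline\pi_\lambda(b)=0$ forces $\alpha\neq 0$, since $\overline\pi_\lambda(\mathbf 1)=v_\lambda\neq 0$; hence $b\neq\mathbf 1$. Because $\mathbf 1$ is the unique element of $\mathcal B(\infty)$ annihilated by all the operators $\widetilde e_{il}$, there is some $(i,l)\in I^{\infty}$ with $\widetilde e_{il}b\neq 0$, i.e. $b\in\widetilde f_{il}\mathcal B(\infty)\cap\mathcal B(\infty)_{-\alpha}$. By the construction of $G$ — it equals $G_{il}$ on $\widetilde f_{il}\mathcal B(\infty)\cap\mathcal B(\infty)_{-\alpha}$, whose codomain is $(\mathtt b_{il}U^-_q(\g))^{\mathbb A}_{-\alpha}\cap\mathcal L(\infty)\cap\mathcal L(\infty)^-$, in accordance with C($r$) — we get
$$G(b)\in(\mathtt b_{il}U^-_q(\g))^{\mathbb A}_{-\alpha}\cap\mathcal L(\infty)\cap\mathcal L(\infty)^-.$$

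Next I would push this forward along $\pi_\lambda$. Using $\pi_\lambda(\mathcal L(\infty))=\mathcal L(\lambda)$ from Proposition \ref{P4}(i), the bar-equivariance of $\pi_\lambda$ (so that $\pi_\lambda(\mathcal L(\infty)^-)=\mathcal L(\lambda)^-$), and the defining identity $\pi_\lambda\big((\mathtt b_{il}U^-_q(\g))^{\mathbb A}_{-\alpha}\big)=(\mathtt b_{il}V(\lambda))^{\mathbb A}_{\lambda-\alpha}$, this yields
$$G(b)v_\lambda\in(\mathtt b_{il}V(\lambda))^{\mathbb A}_{\lambda-\alpha}\cap\mathcal L(\lambda)\cap\mathcal L(\lambda)^-.$$
Since $G(b)\equiv b\ (\text{mod}\ q\mathcal L(\infty))$, we also have $G(b)v_\lambda\equiv\overline\pi_\lambda(b)=0\ (\text{mod}\ q\mathcal L(\lambda))$, whence in fact $G(b)v_\lambda\in(\mathtt b_{il}V(\lambda))^{\mathbb A}_{\lambda-\alpha}\cap q\mathcal L(\lambda)\cap\mathcal L(\lambda)^-$.

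To finish, I would invoke Proposition \ref{M}: part (i) with $n=1$ when $i\in I^{\text{re}}$, part (ii) with the one-part composition $\mathbf c=(l)$ when $i\in I^{\text{im}}\backslash I^{\text{iso}}$, and part (iii) with the one-part partition $\mathbf c=(l)$ when $i\in I^{\text{iso}}$, for which $(\mathtt b_{i,(l)}*V(\lambda))^{\mathbb A}_{\lambda-\alpha}=(\mathtt b_{il}V(\lambda))^{\mathbb A}_{\lambda-\alpha}$ and $\widetilde f_{i,(l)}*\mathcal B(\lambda)=\widetilde f_{il}\mathcal B(\lambda)$. In each case the canonical map
$$(\mathtt b_{il}V(\lambda))^{\mathbb A}_{\lambda-\alpha}\cap\mathcal L(\lambda)\cap\mathcal L(\lambda)^-\ \xrightarrow{\sim}\ \frac{(\mathtt b_{il}V(\lambda))^{\mathbb A}_{\lambda-\alpha}\cap\mathcal L(\lambda)}{(\mathtt b_{il}V(\lambda))^{\mathbb A}_{\lambda-\alpha}\cap q\mathcal L(\lambda)}$$
is an isomorphism; as $G(b)v_\lambda$ lies in the source but maps to $0$ in the target, it must vanish. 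The only delicate bookkeeping is in the isotropic case — recognizing that a single primitive generator $\mathtt b_{il}$ is governed by the $*$-twisted statement (iii) for a partition with exactly one part, and checking that the relevant crystal lattices and $\mathbb A$-forms on $V(\lambda)$ match their counterparts on $U^-_q(\g)$ under $\pi_\lambda$; the genuine difficulty resides in Proposition \ref{M} itself, not here.
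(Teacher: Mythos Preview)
Your proof is correct and follows exactly the paper's approach: pick $(i,l)$ with $\widetilde e_{il}b\neq 0$, observe that $G(b)v_\lambda$ lands in $(\mathtt b_{il}V(\lambda))^{\mathbb A}_{\lambda-\alpha}\cap q\mathcal L(\lambda)\cap\mathcal L(\lambda)^-$, and conclude it vanishes from the isomorphism in Proposition~\ref{M}. You simply spell out what the paper compresses into one line, including the case split on the type of $i$ needed to cite the correct part of Proposition~\ref{M}.
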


\vskip 3mm

By this lemma, we have
$V(\lambda)^{\mathbb A}_{\lambda-\alpha}=\sum_{\substack{b\in \mathcal B(\infty)_{-\alpha}\\ \overline{\pi}_\lambda(b)\neq 0}}\mathbb A G(b)v_\lambda$.
Let $$E=\sum_{\substack{b\in \mathcal B(\infty)_{-\alpha}\\ \overline{\pi}_\lambda(b)\neq 0}}\F G(b)v_\lambda\subseteq V(\lambda)^{\mathbb A}_{\lambda-\alpha}\cap \mathcal L(\lambda)\cap\mathcal L(\lambda)^-.$$
Since $\{b\in\mathcal B(\infty)_{-\alpha}\mid \overline{\pi}_\lambda(b)\neq 0\}\xrightarrow{\sim}\mathcal B(\lambda)_{\lambda-\alpha}$, we have
$$E\xrightarrow{\sim} \mathcal L(\lambda)_{\lambda-\alpha}/q\mathcal L(\lambda)_{\lambda-\alpha}$$ given by
$$G(b)v_\lambda\longmapsto G(b)v_\lambda+q\mathcal L(\lambda)=\overline{\pi}_\lambda(b).$$
By Lemma \ref{K1} (ii), we get
$$E\xrightarrow{\sim} V(\lambda)^{\mathbb A}_{\lambda-\alpha}\cap \mathcal L(\lambda)\cap\mathcal L(\lambda)^- \xrightarrow{\sim} \frac{V(\lambda)^{\mathbb A}_{\lambda-\alpha}\cap \mathcal L(\lambda)}{V(\lambda)^{\mathbb A}_{\lambda-\alpha}\cap q\mathcal L(\lambda)}\cong \mathcal L(\lambda)_{\lambda-\alpha}/q\mathcal L(\lambda)_{\lambda-\alpha},$$
which proves B($r$).

\vskip 3mm

To summarize, we  obtain the main goal of this paper.

\vskip 3mm

\begin{theorem}\label{A}{\rm
There exist canonical isomorphisms
\begin{equation*}
\begin{aligned}
U^-_{\mathbb A}(\g)\cap \mathcal L(\infty)\cap\mathcal L(\infty)^-
& \xrightarrow{\sim} \mathcal L(\infty)/q\mathcal L(\infty),\\
V(\lambda)^{\mathbb A}\cap \mathcal L(\lambda)\cap\mathcal L(\lambda)^-
& \xrightarrow{\sim} \mathcal L(\lambda)/q\mathcal L(\lambda) \ \ \ (\lambda \in P^{+}).
\end{aligned}
\end{equation*}
}
\end{theorem}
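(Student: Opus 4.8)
The plan is to reduce the theorem to its weight-graded form and prove that form by a single induction. For $r\ge 0$, recall the statements A($r$), B($r$), C($r$) above: A($r$) and B($r$) assert the desired isomorphisms on each weight space $U^-_{\mathbb A}(\g)_{-\alpha}$ and $V(\lambda)^{\mathbb A}_{\lambda-\alpha}$ with $\alpha\in Q_+(r)$, while C($r$) records the compatibility $G_\infty(b)\in\mathtt b_{il}^n U^-_q(\g)$ whenever $b\in\widetilde f_{il}^n\mathcal B(\infty)$. I would prove A($r$), B($r$), C($r$) simultaneously by induction on $r$, the case $r=0$ being immediate. Once these hold for all $r$, the theorem follows by taking direct sums over $\alpha\in Q_+$, since $\mathcal L(\infty)$, $\mathcal L(\infty)^-$ and $U^-_{\mathbb A}(\g)$ are all $Q_-$-graded (and likewise for $V(\lambda)$), so that
$$U^-_{\mathbb A}(\g)\cap\mathcal L(\infty)\cap\mathcal L(\infty)^-=\bigoplus_{\alpha}\bigl(U^-_{\mathbb A}(\g)_{-\alpha}\cap\mathcal L(\infty)\cap\mathcal L(\infty)^-\bigr)\xrightarrow{\ \sim\ }\bigoplus_{\alpha}\mathcal L(\infty)_{-\alpha}/q\mathcal L(\infty)_{-\alpha}=\mathcal L(\infty)/q\mathcal L(\infty),$$
and similarly for $V(\lambda)$.

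For the inductive step I would assume A($r-1$), B($r-1$), C($r-1$). Via Lemma \ref{K1} and Proposition \ref{P4} these furnish bar-invariant $\mathbb A$-bases $\{G_\infty(b)\}$, $\{G_\lambda(b)\}$ of the weight spaces in degrees $<r$, compatible under $\pi_\lambda$. The heart of the step is Proposition \ref{M} and Corollary \ref{M1}: for each $(i,l)\in I^\infty$ and $n\ge 1$ they identify $(\mathtt b_{il}^nU^-_q(\g))^{\mathbb A}_{-\alpha}\cap\mathcal L(\infty)\cap\mathcal L(\infty)^-$ with $\bigoplus_{b\in\mathcal B(\infty)_{-\alpha}\cap\widetilde f_{il}^n\mathcal B(\infty)}\F b$, producing a section $G_{il}$ of the crystal map on each such piece. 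One then checks $G_{il}(b)=G_{js}(b)$ when $b$ lies in both $\widetilde f_{il}\mathcal B(\infty)$ and $\widetilde f_{js}\mathcal B(\infty)$ — in the imaginary/isotropic cases this uses $\mathcal L(\infty)^*=\mathcal L(\infty)$ (Corollary \ref{C1}) together with a weight chosen so that $\langle h_k,\lambda\rangle=0$ — so the sections glue to a single map $G$ on $\mathcal L(\infty)_{-\alpha}/q\mathcal L(\infty)_{-\alpha}$ with $b\equiv G(b)\bmod q\mathcal L(\infty)$. Since $U^-_{\mathbb A}(\g)_{-\alpha}=\sum_{(i,l)}(\mathtt b_{il}U^-_q(\g))^{\mathbb A}_{-\alpha}$, combining $G$ with Lemma \ref{K1}(ii) yields A($r$); C($r$) is immediate from the decomposition \eqref{G(b)}; and B($r$) follows by taking $\lambda\gg 0$ (so that $U^-_q(\g)_{-\alpha}\xrightarrow{\sim}V(\lambda)_{\lambda-\alpha}$, etc.) and using the fact that $\overline\pi_\lambda(b)=0$ forces $G(b)v_\lambda=0$.

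The step I expect to be the main obstacle is part (iii) of Proposition \ref{M}, the isotropic case $a_{ii}=0$: a single higher degree $l\alpha_i$ has infinitely many partitions, the generators $\mathtt b_{ik}$ commute, and the Kashiwara operators carry square roots, so the clean ``$\mathtt b_i^n$-filtration'' argument of the real case is unavailable. The remedy is a \emph{descending} induction on $N=\sum_k l_k$: split off the multiplicity $l_1$ of the part $1$, write $(\mathtt b_{i,\mathbf c}*V(\lambda))^{\mathbb A}_{\lambda-\alpha}$ as a sum of the terms $\mathbb A\,\mathtt b_{i1}^{l_1}G_\infty(b)v_\lambda$ for $b$ in the explicit index set $S$ of \eqref{S} plus a term of strictly larger $N$, and verify the hypotheses of Lemma \ref{K2} for the test space $F=\sum_{b\in S}\F\,\mathtt b_{i1}^{l_1}G_\infty(b)v_\lambda$. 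Establishing injectivity of $\varphi_0$ and $\varphi_\infty$ there requires C($r-1$) to push the relevant $G_\infty(b)$ into $\mathtt b_{ik}^{l_k}U^-_q(\g)$, and the commutation identities $\widetilde f_{il}\widetilde e_{il'}=\widetilde e_{il'}\widetilde f_{il}$, $\widetilde f_{il}\widetilde f_{il'}=\widetilde f_{il'}\widetilde f_{il}$ for $l\neq l'$ to organize the overlap terms. With this in hand, the formal assembly described above completes the proof.
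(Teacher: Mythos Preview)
Your proposal is correct and follows essentially the same route as the paper: the simultaneous induction on $r$ through A($r$), B($r$), C($r$), with Proposition~\ref{M}/Corollary~\ref{M1} as the engine, the gluing $G_{il}(b)=G_{js}(b)$ via the $*$-involution and a weight with $\langle h_k,\lambda\rangle=0$, and the descending induction on $N=\sum_k l_k$ for the isotropic case. One small imprecision: B($r$) must hold for \emph{every} $\lambda\in P^+$, not only $\lambda\gg0$; the ``$\lambda\gg0$'' step is what derives Corollary~\ref{M1} from Proposition~\ref{M}, while B($r$) itself is obtained for arbitrary $\lambda$ by pushing the $G(b)$ down via $\pi_\lambda$ and invoking $G(b)v_\lambda=0$ when $\overline\pi_\lambda(b)=0$, exactly as you already note.
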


\vskip 3mm

\begin{definition} \hfill

\vskip 2mm

{\rm

(a) ${\mathbf B} := \{ G_\infty(b) \mid b \in B(\infty) \}$ is called the {\it global basis} of $U_{\mathbb A}^{-}(\g)$
corresponding to $B(\infty)$.

\vskip 2mm

(b) ${\mathbf B}^{\lambda} := \{ G_{\lambda}(b) \mid b \in B(\lambda) \}$ is called the {\it global basis} of $V(\lambda)^{\mathbb A}$ corresponding to $B(\lambda)$.

}
\end{definition}

\vskip 3mm

\begin{remark}

The global bases ${\mathbf B}$ and ${\mathbf B}^{\lambda}$ are unique because they are stable under the bar involution.

\end{remark}

\vskip 10mm

\end{document}